\documentclass[12pt]{article}
\usepackage{setspace}
\usepackage{authblk}

\onehalfspacing

\RequirePackage[OT1]{fontenc}
\RequirePackage{amsthm}
\RequirePackage[sort]{natbib}
\RequirePackage{fix-cm}


\RequirePackage[colorlinks,citecolor=blue,urlcolor=blue]{hyperref}

\usepackage{amsfonts}
\usepackage{amsgen,amsmath,amstext,amsbsy,amsopn,amssymb,comment}
\usepackage[pdftex]{graphicx}

\usepackage[usenames]{color}

\usepackage{mathtools}
\usepackage{autonum}


\numberwithin{equation}{section}


 \oddsidemargin=0in
 \evensidemargin=0in
 \topmargin=0in
 \headsep=0in
 \headheight=0in
 \textheight=9in
 \textwidth=6.5in

 \newcommand{\sinf}{{\xi_p}}

\newcommand{\beaa}{\begin{eqnarray*}}
\newcommand{\eeaa}{\end{eqnarray*}}
\newcommand{\lbl}{\label}

\newcommand{\RR}{\mathbf R}

\newcommand{\E}{\mathbb{E}}
\newcommand{\PP}{\mathbb{P}}

\newcommand{\argmin}{\operatornamewithlimits{arg\,min}}

\newcommand{\tW}{\tilde{W}}
\newcommand{\tT}{\tilde{T}_{m,p}}
\newcommand{\tA}{\tilde{A}}
\newcommand{\tf}{\tilde{f}}
\newcommand{\tQ}{\tilde{Q}}

\newcommand{\fwig}[1]{h_{#1}}

\newtheorem{theorem}{Theorem}
\newtheorem{corollary}{Corollary}

\newtheorem{lemma}{Lemma}

\newtheorem{proposition}{Proposition}
\newtheorem{remark}{Remark}

\newcommand{\beq}{\begin{equation}}
\newcommand{\eeq}{\end{equation}}
\newcommand{\beas}{\begin{eqnarray*}}
\newcommand{\eeas}{\end{eqnarray*}}
\newcommand{\bea}{\begin{eqnarray}}
\newcommand{\eea}{\end{eqnarray}}
\newcommand{\bei}{\begin{itemize}}
\newcommand{\eei}{\end{itemize}}
\newcommand{\ben}{\begin{enumerate}}
\newcommand{\een}{\end{enumerate}}

\newcommand{\goto}{\rightarrow}


\begin{document}

\title{Asymptotic Analysis for Extreme Eigenvalues of Principal Minors of Random Matrices}


{
 \author[1]{T. Tony Cai}
 \author[2]{Tiefeng Jiang}
 \author[2]{Xiaoou Li}
 \affil[1]{Department of Statistics, The Wharton School, University of Pennsylvania}
 \affil[2]{School of Statistics, University of Minnesota}

  \date{}
  \maketitle
} 





\begin{abstract}
Consider a standard white Wishart matrix with parameters $n$ and $p$.
Motivated by applications in high-dimensional statistics and signal processing,
we perform asymptotic analysis on the {maxima and minima} of the eigenvalues of all the $m \times m$ principal minors, under the asymptotic regime that $n,p,m$ go to infinity. Asymptotic results concerning extreme eigenvalues of principal minors of real Wigner matrices are also obtained. In addition, we discuss an application of the theoretical results to the construction of compressed sensing matrices, which provides  insights to compressed sensing in signal processing and high dimensional linear regression in statistics.
\end{abstract}

\noindent%
{\it Keywords:} random matrix, extremal eigenvalue, maximum of random
variables, minimum of random variables.




\section{Introduction}

Random matrix theory is traditionally focused on the spectral analysis  of eigenvalues and eigenvectors of a single random matrix. See, for example,
\cite{wigner1955characteristic,wigner1958distribution,dyson1962statistical,dyson1962statisticalb,dyson1962statisticalc,Mehta:2004wq,tracy1994level,tracy1996orthogonal,tracy2000distribution,diaconis2001linear,johnstone2001distribution,johnstone2008multivariate,jiang2004limiting,jiang2004asymptotic,bryc2006spectral,bai2010spectral}. It is important in its own right and has been proved to be a powerful tool in a wide range of fields including  high-dimensional  statistics, quantum physics, electrical engineering, and  number theory.

The laws of large numbers and the limiting distributions for the extreme eigenvalues of the Wishart matrices are now well known, see, e.g., \cite{bai1999} and \cite{johnstone2001distribution,johnstone2008multivariate}.
Let ${X}={X}_{n\times p}$ be a random matrix with i.i.d. $N(0,1)$ entries and let $W={X}^{\intercal}{X}$. Let $\lambda_1(W)\ge \cdots \ge \lambda_p(W)$ be the eigenvalues of $W$.
The limiting distribution of the largest eigenvalue $\lambda_1(W)$  satisfies, for $n, p\goto \infty$ with $n/p \to \gamma$,
\beq
\label{TW}
\PP\Big(\frac{\lambda_{1}(W)- \mu_n}{\sigma_n}\leq x\Big) \to F_1(x)
\eeq
where $\mu_n=(\sqrt{n-1}+\sqrt{p})^2$ and $\sigma_n=(\sqrt{n-1} + \sqrt{p})(\frac{1}{\sqrt{n-1}} + \frac{1}{\sqrt{p}})^{1/3}$ and $F_1(x)$ is the distribution function of the Tracy-Widom law of type I. The results for the smallest eigenvalue $\lambda_p(W)$ can be found in, e.g., \cite{edelman1988eigenvalues} and \cite{bai1993limit}. These results have also been extended to generalized Wishart matrices, i.e., the entries of ${X}$ are i.i.d. but not necessarily normally distributed,  in, e.g., \cite{bai2010spectral,peche2009universality,tao2010random}.

Motivated by applications in high-dimensional statistics and signal processing, we study in this paper the extreme eigenvalues of the principal minors of a Wishart matrix $W$. Write ${X}=(x_{ij})_{n\times p}=({x}_1, \cdots,{x}_p).$  Let $S =\{i_1, \cdots, i_k\}\subset \{1,2,\cdots, p\}$ with the size of $S$ being $k$ and ${X}_S=({x}_{i_1},\cdots, {x}_{i_k})$. Then $W_S={X}_S^{\intercal}{X}_S$ is a $k\times k$ principal minor of $W.$
Denote by $\lambda_1(W_S)\geq \cdots \geq \lambda_k(W_S)$ the eigenvalues of $W_S$  in descending order.
We are interested in the largest and the smallest eigenvalues of all the $k\times k$ principal minors of $W$ in the setting that $n$, $p$, and $k$ are large but $k$ relatively smaller than $\min\{n, p\}$. More specifically, we are interested in the properties of the maximum of the eigenvalues of all $k\times k$ minors:
\begin{equation}\label{eq:lambda-max}
	\lambda_{\max}(k)=\max_{1\leq i\leq k, S\subset \{1,...,p\}, |S|=k}\lambda_i(W_S)
\end{equation}
and the minimum of the eigenvalues of all $k\times k$ minors:
\begin{equation}\label{eq:lambda-min}
	\lambda_{\min}(k)  =\min_{1\leq i\leq k, S\subset \{1,...,p\}, |S|=k}\lambda_i(W_S),
\end{equation}
where $|S|$ denotes the cardinality of the set $S$.

This is a problem of significant interest in its own right, and it has important applications in statistics and engineering. Before we  establish the properties for the extreme eigenvalues $\lambda_{\max}(k)$ and $\lambda_{\min}(k)$, of the $k\times k$ principal minors of a  Wishart matrix $W$, we first discuss an application in signal processing and statistics, namely the construction of the compressed sensing matrix, as the motivation for our study. The properties of the extreme eigenvalues $\lambda_{\max}(k)$ and $\lambda_{\min}(k)$ can also be used in other applications, including testing for the covariance structure of a high-dimensional Gaussian distribution, which is an important problem in statistics.

\subsection{Construction of Compressed Sensing Matrices}
\label{CCSM}

Compressed sensing, which aims to develop efficient data acquisition techniques that allow accurate reconstruction of highly undersampled sparse signals, has  received much attention recently in several fields, including signal processing, applied mathematics and statistics.  The development of the compressed sensing theory also provides crucial insights into inference for high dimensional linear regression in statistics.  It is now well understood that the  constrained $\ell_1$ minimization method provides an effective way for recovering sparse signals. See, e.g.,  \cite{candes2005decoding, candes2007dantzig}, \cite{donoho2006compressed}, and \cite{donoho2006stable}.
More specifically, in compressed sensing, one observes $(X, y)$ with
\begin{equation}\label{eq:modelsignal}
y=X\beta+z
\end{equation}
where $y\in \mathbb{R}^n$,  $X\in \mathbb{R}^{n\times p}$ with $n$ being much smaller than $p$, $\beta\in \mathbb{R}^p$ is a sparse signal of interest, and $z\in\mathbb{R}^n$ is a vector of measurement errors. One wishes to recover the unknown sparse signal $\beta\in\mathbb{R}^p$ based on $(X, y)$ using an efficient algorithm.

Since the number of measurements $n$ is much smaller than the dimension $p$,  without structural assumptions, the signal $\beta$ is under-determined, even in the noiseless case. A usual assumption in compressed sensing is that $\beta$ is sparse  and one of the most commonly used frameworks for sparse signal recovery is the \emph{Restricted Isometry Property} (RIP). See \cite{candes2005decoding}. A vector is  said to be $k$-sparse if $|{\rm supp}(v)|\leq k$, where ${\rm supp}(v)=\{i:v_i\neq 0\}$ is the support of $v$.
In compressed sensing, the RIP  requires subsets of certain cardinality of the columns of $X$ to be close to an orthonormal system.
For an integer $1\leq k\leq p$, define the restricted isometry constant $\delta_k$ to be the smallest non-negative numbers  such that for all $k$-sparse vectors $\beta$,
\begin{equation}\label{RIP}
(1-\delta_k)\|\beta\|^2_2\; \leq \|X \beta\|^2_2 \leq \; (1+\delta_k)\|\beta\|_2^2.
\end{equation}
There are a variety of sufficient conditions on the RIP  for the exact/stable recovery of $k$-sparse signals. A sharp condition was established in \cite{cai2014sparse} and a conjecture was proved in  \cite{zhang2018proof}. Let
\beq
\label{bt}
b_*(t)=\left\{
\begin{array}{ll}
\frac{t}{4-t} \quad & 0<t< {4\over 3} \\
\sqrt{t-1\over t}  \quad & t\ge {4\over 3}
\end{array}
.\right.
\eeq
For any given $t>0$,  the condition $\delta_{tk} < b_*(t)$ guarantees the exact recovery of all $k$ sparse signals in the noiseless case through the constrained $\ell_1$ minimization
\[
\hat \beta = \argmin\{\|\gamma\|_1: y=X\gamma, \gamma\in \RR^p\}.
\]
Moreover, for any $\varepsilon>0$, $\delta_{tk}< b_*(t)+\varepsilon$ is not sufficient to guarantee the exact recovery of all $k$-sparse signals for large $k$.  In addition, the conditions $\delta_{tk} < b_*(t)$  is also shown to be sufficient for stable recovery of approximately sparse signals in the noisy case.

One of the major goals of compressed sensing  is the construction of the measurement matrix ${X}_{n\times p}$, with the number of measurements $n$ as small as possible relative to $p$, such that all $k$-sparse signals can be accurately recovered.  Deterministic construction of large measurement matrices that satisfy  the RIP  is known to be difficult. Instead, random matrices are commonly used. Certain random matrices have been shown to satisfy the RIP conditions with high probability. See, e.g.,  \cite{baraniuk2008simple}. When the measurement matrix $X$ is a Gaussian matrix with i.i.d. $N(0, {1\over n})$ entries,  for any given $t$, the condition $\delta_{tk} < b_*(t)$  is equivalent to that the extreme eigenvalues, $\lambda_{\max}(tk)$ and $\lambda_{\min}(tk)$, of the $tk\times tk$ principal minors of the  Wishart matrix $W={X}^{\intercal}{X}$ satisfy
\[
1-b_*(t) <  \lambda_{\min}(tk)\le \lambda_{\max}(tk) <  1+b_*(t).
\]
Hence the condition (\ref{RIP}) can be viewed as a condition on $\lambda_{\min}(tk)$ and $\lambda_{\max}(tk)$ as defined in \eqref{eq:lambda-max} and \eqref{eq:lambda-min}, respectively.

\subsection{Main results and organization of the paper}

In this paper, we investigate the asymptotic behavior of the extreme eigenvalues $\lambda_{\max}(m)$ and $\lambda_{\min}(m)$ defined in \eqref{eq:lambda-max} and \eqref{eq:lambda-min}. We also consider the extreme eigenvalues of a related Wigner matrix. We then discuss the application of the results in the construction of compressed sensing matrices.

The rest of the paper is organized as follows. Section \ref{sec:settings} describes the precise setting of the problem. The main results are stated in Section \ref{sec:main-results}.  The proofs of the main theorems are given in Section  \ref{sec:proof}. 
The proofs of all the supporting lemma are given in the Appendix. The proof strategy for the main results is given in Section~\ref{sec:proof-strat}.

\section{Problem settings}
\label{sec:settings}
In this paper, we consider a white Wishart matrix  $W=(w_{ij})_{1\leq i,j\leq p}=X^{\intercal} X$, where $X=(x_{ij})_{1\leq i\leq n,1\leq j\leq p}$ and $x_{ij}$ are independent $N(0,1)$-distributed random variables. For $S\subset\{1,...,p\}$, set the principal minor $W_S=(w_{ij})_{i,j\in S}$.
For an $m\times m$ symmetric matrix $A$, let $\lambda_1(A)$ and $\lambda_m(A)$ denote the largest and the smallest eigenvalues of $A$, respectively. Let
\begin{equation}\label{eq:t-stat}
	T_{m,n,p} = \max_{S\subset\{1,...,p\}, |S|=m}\lambda_1(W_S),
\end{equation}
 and $|S|$ denotes the cardinality of the set $S$. We also define
\begin{equation}\label{eq:v-stat}
{V}_{m,n,p}=  \min_{S\subset\{1,...,p\}, |S|=m}\lambda_m(W_S).	
\end{equation}
 Of interest is the asymptotic behavior of $T_{m,n,p}$ and ${V}_{m,n,p}$ when both $n$ and $p$ grow large.

Notice $W_{ij}$ is the sum of $n$ independent and identically distributed (i.i.d.) random variables. By the standard central limit theorem, for given $i\geq 1$ and $j\geq 1$, we have
\begin{equation}
    \frac{w_{ij}-n}{\sqrt{n}}\Longrightarrow N(0,2)\text{ if } i=j,\text{ and }
    \frac{w_{ij}}{\sqrt{n}} \Longrightarrow N(0,1) \text{ if } i\neq j,
\end{equation}
as $n \to\infty$, where we use ``$\Longrightarrow$'' to indicate convergence in distribution.
Motivated by this limiting distribution, we also consider the Wigner matrix $\tW=(\tilde{w}_{ij})_{1\leq i,j\leq p}$, which is a symmetric matrix whose upper triangular entries are independent Gaussian variables with the  following distribution
\begin{equation}\label{eq:dist-wig}
\tilde{w}_{ij}\sim
\begin{cases}
  N(0,2) \text{ if } i=j;\\
  N(0,1) \text{ if } i <j.
\end{cases}
\end{equation}
For $S\subset\{1,...,p\}$, set $\tilde{W}_S=(\tilde{w}_{ij})_{i,j\in S}$.
We will work on the corresponding statistics
\beq\label{eq:tt-stat}
\tilde{T}_{m,p}= \max_{S\subset\{1,...,p\}, |S|=m}\lambda_1(\tilde{W}_S)
\eeq
 and
 \beq\label{eq:tv-stat}
 \tilde{V}_{m,p}=  \min_{S\subset\{1,...,p\}, |S|=m}\lambda_m(\tilde{W}_S).
 \eeq
In this paper, we study asymptotic results regarding the four statistics  $T_{m,n,p}$, ${V}_{m,n,p}$, $\tilde{V}_{m,p}$ and $\tT$.
\section{Main results}
\label{sec:main-results}
Throughout the paper, we will let $n\to\infty$ and let $p=p_n\to\infty$ with a speed depending on $n$. The following technical assumptions will be used in our main results.

\medskip\noindent
{\bf Assumption 1.}   The integer $m\geq 2$ is fixed and $\log p=o(n^{1/2})$; or $m\to\infty$   with
\bea\lbl{nice_birth_1}
m = o\left(\min\left\{ \frac{(\log p)^{1/3} }{\log\log p },\; \frac{n^{1/4}}{(\log n)^{3/2}(\log p)^{1/2}}\right\}\right).
\eea
Notice the second part of Assumption 1 implies that $\log p=o(n^{1/2}(\log n^{-3}))$. It says the population dimension $p$ can be very large and it can be as large as $\exp\{o(n^{1/2}/\log n^{3})\}$.  This assumption is used in the analysis of $T_{m,n,p}$ and ${V}_{m,n,p}$.  The requirement  $m = o((\log p)^{1/3}/\log\log p)$ is used in the last step in \eqref{eq:tq-rough-lower}. The second part of the condition $m = o(n^{1/4}(\log n)^{-3/2}(\log p)^{-1/2})$ is needed in a few places including \eqref{through_water}. The key scales $(\log p)^{1/3}$ and $n^{1/4}$ in condition \eqref{nice_birth_1} are tight, the terms of lower order $\log\log p$ and  $(\log n)^{3/2}$ can be improved to be relatively smaller.

The next assumption is needed for studying the properties of $\tilde{V}_{m,p}$ and $\tT$.

\medskip\noindent
{\bf Assumption 2.}  The integer $m$ satisfies that
\bea\lbl{nice_birth_2}
\text{$m\geq 2$ is fixed, or $m\to\infty$ with $m=o\Big(\frac{(\log p)^{1/3}}{\log\log p}\Big)$}.
\eea
This condition is the same as the first part of \eqref{nice_birth_1}. We start with asymptotic results for $T_{m,n,p}$ in \eqref{eq:t-stat} and ${V}_{m,n,p}$ in \eqref{eq:v-stat}.
\begin{theorem}\label{thm:wishart}
Suppose Assumption 1 in \eqref{nice_birth_1} holds. Recall $T_{m,n,p}$ defined as in \eqref{eq:t-stat}. Then,
\begin{equation}
	Z_n:=\frac{T_{m,n,p}-n}{\sqrt{n}}-2\sqrt{m\log p} \to 0
\end{equation}
in probability as $n\to\infty.$
Furthermore,
\begin{equation}\label{eq:mgf-thm-wish}
  	 \lim_{n\to\infty}\E\left[e^{\alpha|Z_n|} \mathbf{1}_{\{
  |Z_n|\geq \delta
  \}}\right]=0
  \end{equation}
  for all $\alpha>0$ and $\delta>0$.
\end{theorem}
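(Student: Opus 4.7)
The plan is to prove matching upper and lower bounds on $T_{m,n,p}$ around $n+2\sqrt{nm\log p}$, and then upgrade convergence in probability to exponential-moment convergence via quantitative tail estimates.

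\emph{Setup.} I would start from the observation that, for any unit vector $v\in\mathbb{R}^p$ supported on $S$, $v^{\intercal}Wv=\|Xv\|^2\sim\chi^2_n$ since $Xv\sim N(0,I_n)$. Hence $T_{m,n,p}=\sup_{v}\|Xv\|^2$, the supremum taken over $m$-sparse unit vectors. The Laurent--Massart tail bound gives, for each such $v$ and each $t=o(\sqrt{n})$,
\begin{equation*}
\PP\!\left(\frac{v^{\intercal}Wv-n}{\sqrt{n}}>t\right)\leq\exp\!\left(-\tfrac{t^2}{4}(1-o(1))\right),
\end{equation*}
with an analogous matching left-tail estimate. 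Under Assumption 1 one has $\sqrt{m\log p}=o(\sqrt{n})$, so all thresholds of interest lie in this Gaussian regime.

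\emph{Upper bound.} For each subset $S$ of size $m$, I would fix a $1/4$-net $N_S$ of the unit sphere in $\mathbb{R}^S$ with $|N_S|\leq 9^m$; standard net approximation gives $\lambda_1(W_S)\leq C\max_{w\in N_S}w^{\intercal}Ww$ for a universal $C$. Union bounding over $w\in\bigcup_{|S|=m}N_S$, a collection of cardinality at most $\binom{p}{m}9^m$, yields
\begin{equation*}
\PP\!\left(\frac{T_{m,n,p}-n}{\sqrt{n}}>2\sqrt{m\log p}+\eta\right)\leq\binom{p}{m}\,9^m\exp\!\left(-\tfrac{(2\sqrt{m\log p}+\eta)^2}{4}(1-o(1))\right).
\end{equation*}
Using $\log\binom{p}{m}\leq m\log p$ together with Assumption 1 (which makes $m=o(\sqrt{m\log p}/\log m)$), the right-hand side decays at least exponentially in $\eta\sqrt{m\log p}$, and in $\eta^2$ once $\eta$ becomes large.

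\emph{Lower bound.} Next I would use the $\binom{p}{m}$ ``diagonal'' test vectors $v_S=\mathbf{1}_S/\sqrt{m}$. Each $Y_S:=v_S^{\intercal}Wv_S=\|Xv_S\|^2\sim\chi^2_n$, and a short computation gives $\mathrm{Cov}((Y_S-n)/\sqrt{n},(Y_{S'}-n)/\sqrt{n})=2(|S\cap S'|/m)^2$. Since $T_{m,n,p}\geq\max_S Y_S$, it suffices to show $\max_S Y_S\geq n+\sqrt{n}(2\sqrt{m\log p}-\eta)$ with probability tending to $1$. A second-moment (Chen--Stein) argument applied to $I_S=\mathbf{1}\{Y_S>n+\sqrt{n}(2\sqrt{m\log p}-\eta)\}$ is natural: the first moment $\binom{p}{m}\PP(Y_S>\cdot)$ diverges by the matching chi-square lower-tail estimate together with $\log\binom{p}{m}\sim m\log p$, and the second moment is controlled by decomposing pairs $(S,S')$ by overlap $k=|S\cap S'|$, with small-$k$ pairs contributing nearly independently and large-$k$ pairs being combinatorially rare.

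\emph{Exponential moment and main obstacle.} Finally I would sharpen Steps 2 and 3 into the quantitative tail $\PP(|Z_n|\geq x)\leq C\exp(-c_n x)$ uniformly for $x\geq\delta$ with $c_n\to\infty$, whence
\begin{equation*}
\E\!\left[e^{\alpha|Z_n|}\mathbf{1}_{\{|Z_n|\geq\delta\}}\right]=e^{\alpha\delta}\PP(|Z_n|\geq\delta)+\alpha\!\int_{\delta}^{\infty}e^{\alpha x}\PP(|Z_n|>x)\,dx\;\longrightarrow\;0.
\end{equation*}
I expect the main obstacle to be the lower bound: to hit the threshold $2\sqrt{m\log p}$ rather than the weaker $2\sqrt{\log(p/m)}$ one gets from a disjoint family, one must genuinely exploit all $\binom{p}{m}$ correlated test statistics, which forces careful overlap-dependent bookkeeping of the joint Gaussian tails, and uses Assumption 1 precisely to absorb the $m$-dependent combinatorial prefactors. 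The exponential-moment strengthening further requires that none of these tail estimates degrade to merely polynomial in $x$ in the far-tail regime.
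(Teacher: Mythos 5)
The overall skeleton you propose (union bound with an $\varepsilon$-net for the upper tail, a second-moment argument for the lower tail, then integration to get the exponential moment) matches the paper's in spirit, but there are two concrete problems in the upper-bound step, and the second of them is exactly the difficulty that forces the paper to introduce an additional piece of machinery you do not have.

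First, a \emph{multiplicative} $1/4$-net bound $\lambda_1(W_S)\leq C\max_{w\in N_S}w^{\intercal}Ww$ with $C>1$ is useless here. Since $\E\,w^{\intercal}Ww=n$ exactly, the event $\{\max_{w}w^{\intercal}Ww>n/C(1+o(1))\}$ has probability tending to $1$, not to $0$; the multiplicative loss moves the threshold from $n+2\sqrt{nm\log p}+\sqrt{n}\eta$ all the way down below $n$. One must use the \emph{additive} form $\lambda_1(W_S)\leq\max_{w\in N_S}w^{\intercal}W_Sw+2d\,\lambda_1(W_S)$ and pick $d$ so small that the correction $2d\lambda_1(W_S)\approx 2dn$ is $o(\sqrt{n})$, i.e.\ $d\lesssim t/(m\sqrt{n})$; this is precisely what the paper's Lemma~\ref{lemma:moderate-bound} does.

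Second, and more seriously: even once you switch to the additive net, the entropy of that net is $\log N_d\asymp m\log(1/d)\asymp m\log\bigl(m\sqrt{n}/t\bigr)\gtrsim \tfrac{m}{2}\log n$. After the union bound over $\binom{p}{m}$ subsets, the useful signal at deviation $t$ is only of order $t\sqrt{m\log p}$, and the inequality
$t\sqrt{m\log p}\gg m\log n$, i.e.\ $t\gg\sqrt{m/\log p}\,\log n$,
is exactly the threshold $\omega_n$ appearing in the paper (eq.~\eqref{green_fresh}). Assumption~1 permits $\log p$ as small as $\text{poly}(\log n)$, so $\omega_n$ can tend to infinity, and then \emph{every} fixed $t\in[\delta,\omega_n)$ falls outside the range that any net-plus-$\chi^2$-tail argument can handle. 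The paper closes this range (Case 3 of the proof of Proposition~\ref{upper_bound_11}) by comparing the Wishart eigenvalue density to the Wigner one via a moderate-deviation change of measure (Lemma~\ref{lemma:wig-eigen-approx}); nothing in your plan does this, and without it the upper bound cannot be completed across the whole parameter range allowed by Assumption~1.

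On the lower bound your route is genuinely different from the paper's and is worth noting. You take the single test statistic $Y_S=v_S^{\intercal}Wv_S\sim\chi^2_n$ and do a second moment on $\sum_S\mathbf{1}\{Y_S>\cdot\}$; the paper instead defines $A_S$ as the event that \emph{every} normalized entry of $W_S$ exceeds a threshold, then transfers to the Wigner matrix by change of measure, where $\PP(\tA_{S_1}\cap\tA_{S_2})$ factorizes exactly over independent Gaussian entries (eq.~\eqref{eq:intersection}). Your approach avoids the Wishart-to-Wigner comparison in this step (the $Y_S$'s are exact $\chi^2_n$'s), but in exchange requires bivariate $\chi^2$ joint-tail estimates with overlap-dependent correlation $2(|S\cap S'|/m)^2$, and the contribution of the most-correlated pairs ($|S\cap S'|=m-1$) to $\mathrm{Var}(Q)/(\E Q)^2$ is only borderline negligible, decaying like $\exp\bigl(-c\,\eta\sqrt{m\log p}+O(m\log m)\bigr)$. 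That is salvageable under the first clause of Assumption~1, but it is the main technical work and you have flagged it without carrying it out. (You also attribute this step to Chen--Stein; what you actually describe is a second-moment method, and the paper explicitly remarks that Chen--Stein does not fit here, so the label is misleading even though your underlying idea is sound.) The final integration step via Lemma~\ref{lemma:integral} is correct.
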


\begin{remark}\label{thm:wishart-min}
Suppose Assumption 1 in \eqref{nice_birth_1} holds. Recall ${V}_{m,n,p}$ defined as in \eqref{eq:v-stat}.
Similar to the proof of Theorem \ref{thm:wishart} it can be shown that
\begin{equation}
	Z'_n:=\frac{{V}_{m,n,p}-n}{\sqrt{n}}+2\sqrt{m\log p}\to 0
\end{equation}
in probability as $n \to\infty$, and  furthermore,
  \begin{equation}\label{eq:mgf-thm-wish-min}
  	 \lim_{n\to\infty}\E\left[e^{\alpha|Z_n'|} \mathbf{1}_{\{
  |Z_n'|\geq \delta
  \}}\right]=0
  \end{equation}
  for all $\alpha>0$ and $\delta>0$. For reasons of space, we omit the details here.
\end{remark}

We now turn to  the asymptotic analysis for $\tilde{T}_{m,p}$ and $\tilde{V}_{m,p}$.
\begin{theorem}\label{thm:wigner}
Suppose  Assumption 2 in \eqref{nice_birth_2} is satisfied. Recall $\tilde{T}_{m,p}$ defined as in \eqref{eq:tt-stat}.
Then,
\begin{equation}
	\tilde{Z}_p:=\tilde{T}_{m,p}-2\sqrt{m\log p}\to 0
\end{equation}
in probability as $n\to\infty.$
Furthermore,
  \begin{equation}\label{eq:mgf-thm-wig}
    	\lim_{p\to\infty}\E\left[
   e^{\alpha |\tilde{Z}_p|}\mathbf{1}_{\{|\tilde{Z}_p|\geq \delta\} }
  \right]=0
    \end{equation}
for all $\alpha> 0$ and $\delta>0$.
\end{theorem}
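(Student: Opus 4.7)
The plan is to prove the theorem in three stages: (i) an upper bound $\tT \le 2\sqrt{m\log p}(1 + o_p(1))$ with subgaussian upper tail; (ii) a matching lower bound; and (iii) an upgrade from convergence in probability to the exponential moment statement \eqref{eq:mgf-thm-wig} via Gaussian concentration. The main difficulty lies in stage (ii), where a second moment computation must handle correlations between highly overlapping sparse test vectors, and this is where Assumption~2 is consumed.

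For stage (i), I would use a standard $\epsilon$-net argument on the unit sphere of $\RR^m$. Fix $|S| = m$ and take an $\epsilon$-net $\mathcal{N}_\epsilon \subset S^{m-1}$ with $|\mathcal{N}_\epsilon| \le (3/\epsilon)^m$; then $\lambda_1(\tW_S) \le (1-2\epsilon)^{-1}\max_{v \in \mathcal{N}_\epsilon} v^{\intercal}\tW_S v$. For any fixed unit $v \in \RR^m$, the variable $v^{\intercal}\tW_S v$ is Gaussian with variance $2\sum_i v_i^4 + 4\sum_{i<j} v_i^2 v_j^2 = 2$, so a Gaussian tail bound together with union bounds over $\mathcal{N}_\epsilon$ and over the $\binom{p}{m}$ subsets gives
\[
\PP\bigl(\tT > t\bigr) \;\le\; \binom{p}{m}\bigl(3/\epsilon\bigr)^m \exp\!\bigl(-(1-2\epsilon)^2 t^2/4\bigr).
\]
Taking $t = 2\sqrt{m\log p} + s$ and $\epsilon = 1/\log p$, the condition $m = o((\log p)^{1/3}/\log\log p)$ from Assumption~2 forces $m\log(3/\epsilon) = o(\log p)$, and the right-hand side simplifies to $\exp(-c s\sqrt{m\log p})$ for all $s \ge 1$ and all large $p$.

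For stage (ii), I would apply the second moment method to the family of sign-pattern test vectors $v_{S,\sigma} = m^{-1/2}\sum_{i \in S}\sigma_i \ee_i$ with $|S| = m$ and $\sigma \in \{\pm 1\}^S$, each of which is an $m$-sparse unit vector in $\RR^p$. Setting $Z_{S,\sigma} := v_{S,\sigma}^{\intercal}\tW v_{S,\sigma}$, every $Z_{S,\sigma}$ is $N(0,2)$ and $\tT \ge \max_{S,\sigma} Z_{S,\sigma}$. A direct calculation yields
\[
\mathrm{Cov}\bigl(Z_{S,\sigma}, Z_{S',\sigma'}\bigr) \;=\; \frac{2 r^2}{m^2},
\qquad r := \sum_{i \in S \cap S'} \sigma_i \sigma'_i,
\]
so correlations vanish on disjoint supports. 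For $t = 2\sqrt{m\log p}(1-\eta)$ and $N_t := \#\{(S,\sigma) : Z_{S,\sigma} > t\}$, the first moment $\E N_t = \binom{p}{m} 2^m \bar{\Phi}(t/\sqrt 2)$ grows polynomially in $p$ for any fixed $\eta > 0$. Bounding the variance requires stratifying pairs by $k = |S \cap S'|$: for small $k$ the correlation $r^2/m^2$ is small enough that pairs behave nearly independently, whereas pairs with large $k$ (whose covariance can be close to $1$) are combinatorially rare; under Assumption~2 both types contribute $o((\E N_t)^2)$, so Chebyshev yields $\PP(N_t \ge 1) \to 1$. This is the delicate step, since near-duplicate pairs contribute roughly $\PP(Z > t)$ rather than $\PP(Z > t)^2$ each, and a careful count is needed to show their cumulative contribution is dominated by $(\E N_t)^2$.

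Finally, for stage (iii), I would invoke the Borell--Sudakov--Tsirelson Gaussian concentration inequality. Writing $\tW$ in terms of its underlying i.i.d.\ $N(0,1)$ coordinates, the squared gradient of $v^{\intercal}\tW v$ equals $2$ for every unit $v$ (the same calculation as in stage (i)), so $\tT$, being a supremum of $\sqrt 2$-Lipschitz Gaussian functionals, is itself $\sqrt 2$-Lipschitz, and
\[
\PP\bigl(|\tT - \E \tT| > s\bigr) \;\le\; 2\exp\!\bigl(-s^2/4\bigr)
\]
independently of $p$. Combined with stages (i) and (ii) this pins down $\E \tT = 2\sqrt{m\log p} + o(1)$, and hence $\PP(|\tilde Z_p| > s) \le 2\exp(-(s - o(1))^2/4)$. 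The moment statement \eqref{eq:mgf-thm-wig} then follows from the tail-integration estimate $\E[e^{\alpha|\tilde Z_p|}\mathbf{1}_{\{|\tilde Z_p| \ge \delta\}}] \le e^{\alpha \delta}\PP(|\tilde Z_p| \ge \delta) + \alpha \int_\delta^\infty e^{\alpha s}\PP(|\tilde Z_p| > s)\,ds$, the right-hand side of which tends to $0$ by the subgaussian tail.
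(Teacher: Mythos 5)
Your stages (i) and (iii) are sound and in fact take a somewhat cleaner route than the paper for those pieces: the $\epsilon$-net upper bound is essentially equivalent to the paper's Lemma~\ref{eq:wig-margin-tail}, and the Borell--Sudakov--Tsirelson envelope $2e^{-s^2/4}$, combined with the pointwise convergence $\PP(|\tilde Z_p|>s)\to 0$ and dominated convergence in the tail-integration identity (the paper's Lemma~\ref{lemma:integral}), does yield \eqref{eq:mgf-thm-wig} without the paper's painstaking uniform-in-$t$ estimates such as $e^{-t\sqrt{m\log p}/2}$. Your covariance formula $\mathrm{Cov}(Z_{S,\sigma},Z_{S',\sigma'})=2r^2/m^2$ is also correct.

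The genuine gap is in stage (ii): you run the second moment method at the \emph{multiplicative} threshold $t=2\sqrt{m\log p}\,(1-\eta)$ for \emph{fixed} $\eta>0$ (your claim that $\E N_t$ ``grows polynomially in $p$'' only holds at that scale). This proves $\tilde T_{m,p}\ge (1-\eta)\,2\sqrt{m\log p}$ w.h.p., i.e.\ the law of large numbers $\tilde T_{m,p}/(2\sqrt{m\log p})\to 1$, which is the content of Theorem~\ref{thm:general-wig}, not of Theorem~\ref{thm:wigner}. The theorem asserts the \emph{additive} statement $\tilde T_{m,p}-2\sqrt{m\log p}\to 0$, and since $2\sqrt{m\log p}\to\infty$, a multiplicative error $\eta$ corresponds to an additive error $2\eta\sqrt{m\log p}\to\infty$. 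Stage (iii) cannot repair this: Borell--TIS localizes $\tilde T_{m,p}$ to within $O(1)$ of $\E\tilde T_{m,p}$, but stages (i)--(ii) as stated only localize $\E\tilde T_{m,p}$ to $[(1-\eta)2\sqrt{m\log p}-O(1),\,2\sqrt{m\log p}+o(1)]$, so you cannot ``pin down $\E\tilde T_{m,p}=2\sqrt{m\log p}+o(1)$.'' You must instead run the second moment argument at $t=2\sqrt{m\log p}-\delta$, i.e.\ $\eta=\delta/(2\sqrt{m\log p})\to 0$. There $\E N_t$ is only of order $e^{\Theta(\delta\sqrt{m\log p})}$ (subpolynomial in $p$ for slowly growing $m$), so the variance bookkeeping becomes much tighter: one must check that $m\log m=o(\sqrt{m\log p})$ so the first moment still diverges, and the near-duplicate stratum $k=|S\cap S'|=m-1$ of your sign-vector family yields a margin of only about $\log p/m^2$ over $(\E N_t)^2$, whose positivity (and divergence) is exactly where Assumption~2 is consumed. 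This is precisely the computation the paper carries out (with the alternative family of events $\tA_S$ requiring all entries of $\tW_S$ to exceed $\tau_{m,p,t}=\sqrt{4\log p/m}-t/m$, together with Lemmas~\ref{lemma:p-choose-m} and~\ref{lemma:max-l}), and without it your argument proves a strictly weaker statement.
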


\begin{remark}\label{thm:wigner-min}
Suppose  Assumption 2 in \eqref{nice_birth_2} is satisfied.  Review $\tW=(\tilde{w}_{ij})_{1\leq i,j\leq p}$ above \eqref{eq:dist-wig}, we know $\tW$ and $-\tW$ have the same distribution. Let $\tilde{V}_{m,p}$ be defined as in \eqref{eq:tv-stat}. It follows that $-\tilde{T}_{m,p}$ and  $\tilde{V}_{m,p}$ have the same distribution. Then, by Theorem \ref{thm:wigner},
\begin{equation}
	\tilde{Z}'_p:=\tilde{V}_{m,p}+2\sqrt{m\log p}\to 0
\end{equation}
in probability as  $n\to\infty.$ Furthermore,
  \begin{equation}\label{eq:mgf-thm-wig-min}
  	\lim_{p\to\infty}\E\left[
  e^{\alpha|\tilde{Z}'_p|}\mathbf{1}_{\{|\tilde{Z}'_p|\geq \delta\} }
  \right]=0
  \end{equation}
  for all $\alpha> 0$ and $\delta>0$.
\end{remark}

To better explain the convergence results in the \eqref{eq:mgf-thm-wish} -- \eqref{eq:mgf-thm-wig-min}, we give the following comments.
\begin{remark}\label{remark:implications}
Equation \eqref{eq:mgf-thm-wish} has the following implications, whose rigorous justification is given in Section~\ref{sec:proof}.
	\begin{enumerate}
		\item  $\lim\limits_{n\to\infty}\E\left[
   e^{\alpha |Z_n|}
  \right]=1$
  for all $\alpha>0$;
  \item  $
			\lim\limits_{n\to\infty}\E(
   |Z_n|^{\alpha}  )=0
			$
			for all $\alpha>0$;
			\item $\lim\limits_{n\to\infty }{\rm Var}(Z_n)=0$.
	\end{enumerate}
\end{remark}
We now elaborate on the above results. First, the moment generating function of $|Z_n|$ exists and is close to $1$ when $n$ is large. As a result, $|Z_n|$ has a sub-exponential tail probability for large $n$. Second, $Z_n$ converges to $0$ in $L_q$ for all $q>0$. Third, the variance of $Z_n$ vanishes for large $n$, indicating that ${\rm Var}(T_{m,n,p})=o(n)$ as $n\to\infty$. Overall, we can see \eqref{eq:mgf-thm-wish} is stronger than the typical convergence in probability. This provides information on the behavior of the  tail probability. Similar interpretations can also be made for \eqref{eq:mgf-thm-wish-min}, \eqref{eq:mgf-thm-wig} and \eqref{eq:mgf-thm-wig-min}, respectively.

\subsection{Extensions}
In this section, we discuss extensions of Theorems \ref{thm:wishart} and \ref{thm:wigner}. Similar extensions can also be made to Remarks \ref{thm:wishart-min} and \ref{thm:wigner-min}. They are omitted for the clarity of presentation.

First, we point out that Theorems \ref{thm:wishart} and \ref{thm:wigner}  still hold if we replace the size-$m$ principal minors by the principal minors with the size no larger than $m$ in the definition of $\tilde{T}_{m,p}$ and $T_{m,n,p}$, by the eigenvalue interlacing theorem [see, e.g., \cite{horn2012matrix}]. We then have the following corollary.
\begin{corollary}\label{coro:smaller-size}
\sloppy Define $ \hat{T}_{m,n,p} = \max_{S\subset\{1,...,p\}, |S|\leq m}\lambda_1(W_S)$ and $\hat{T}_{m,p}= \max_{S\subset\{1,...,p\}, |S|\leq m}\lambda_1(\tilde{W}_S)$. Then, Theorems \ref{thm:wishart} and \ref{thm:wigner} still hold if ``$T_{m,n,p}$" and ``$\tilde{T}_{m,p}$" are replaced by ``$\hat{T}_{m,n,p}$" and ``$\hat{T}_{m,p}$", respectively.
\end{corollary}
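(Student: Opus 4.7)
The plan is to show that the new statistics $\hat{T}_{m,n,p}$ and $\hat{T}_{m,p}$ are in fact equal, pointwise (not merely in distribution), to the original statistics $T_{m,n,p}$ and $\tT$, respectively. Once this identification is established, both conclusions of Corollary~\ref{coro:smaller-size} follow verbatim from Theorems~\ref{thm:wishart} and \ref{thm:wigner} without any additional probabilistic work.

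For the equality, one direction is trivial: since the family $\{S\subset\{1,\dots,p\}:|S|=m\}$ is contained in $\{S:|S|\le m\}$, the maximum over the larger family is at least the maximum over the smaller one, giving $\hat{T}_{m,n,p}\ge T_{m,n,p}$ and $\hat{T}_{m,p}\ge \tT$. The reverse inequality is where Cauchy's interlacing theorem enters. Assuming $m\le p$ (which is implicit in the theorems' hypotheses since $m$ is growing slower than $p$), any index set $S$ of size $k\le m$ can be extended to a set $S'\supset S$ with $|S'|=m$. Then $W_S$ is a principal submatrix of $W_{S'}$, and applying Cauchy's interlacing theorem repeatedly ($m-k$ times, removing one row and column at a time) yields $\lambda_1(W_S)\le \lambda_1(W_{S'})\le T_{m,n,p}$. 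Taking the maximum over all such $S$ gives $\hat{T}_{m,n,p}\le T_{m,n,p}$, and the analogous argument for $\tW_S$ yields $\hat{T}_{m,p}\le \tT$.

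Combining the two inequalities gives $\hat{T}_{m,n,p}=T_{m,n,p}$ and $\hat{T}_{m,p}=\tT$ as random variables on the same probability space, so the convergence-in-probability statements and the moment-type bounds in \eqref{eq:mgf-thm-wish} and \eqref{eq:mgf-thm-wig} transfer directly without any loss. Since the argument is a one-line consequence of interlacing plus set inclusion, there is no genuine obstacle; the only mild subtlety is verifying that the extension $S'\supset S$ is always available, which is guaranteed by $m\le p$. The corresponding extensions for $\tilde V_{m,p}$ and $V_{m,n,p}$ mentioned in the preceding paragraph of the paper are handled symmetrically by applying interlacing to the smallest rather than the largest eigenvalue (or, for the Wigner case, by the sign-change symmetry of $\tW$ already exploited in Remark~\ref{thm:wigner-min}).
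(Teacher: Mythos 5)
Your proposal is correct and is essentially the paper's own argument: the authors justify the corollary in one sentence by appealing to the eigenvalue interlacing theorem, which is exactly your observation that any $S$ with $|S|\le m$ extends to some $S'\supseteq S$ with $|S'|=m$ and $\lambda_1(W_S)\le\lambda_1(W_{S'})$, so that $\hat{T}_{m,n,p}=T_{m,n,p}$ and $\hat{T}_{m,p}=\tilde{T}_{m,p}$ pointwise. Your write-up simply makes explicit the set-inclusion direction and the requirement $m\le p$, both of which are immediate.
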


Next, we extend Theorem~\ref{thm:wigner} to allow other values of variance for the Wigner matrix. Here, we assume that the matrix $\tilde{W}$ to have the following distribution, instead of that in \eqref{eq:dist-wig}. For some $\eta\geq 0$,
\begin{equation}\label{eq:dist-general-wig}
  \tilde{w}_{ij}\sim
  \begin{cases}
    N(0,\eta) \text{ if } i=j;\\
    N(0,1) \text{ if } i<j.
  \end{cases}
\end{equation}
In addition, assume that $\tilde{W}$ is symmetric and $\tilde{w}_{ij}$ are independent for $i\leq j$. Note that if $\eta=2$, then the above distribution is the same as that defined  in \eqref{eq:dist-wig}. For $\tilde{W}$ defined in \eqref{eq:dist-general-wig}, we consider the statistic $\tilde{T}_{m,p}$. The following law of large numbers is obtained.
\begin{theorem}\label{thm:general-wig}
  Suppose $p\to\infty$ and that Assumption 2 in \eqref{nice_birth_2} is satisfied. In addition, assume $\tilde{W}$ has the distribution as in \eqref{eq:dist-general-wig} with $0\leq \eta\leq 2$.
  Then,
  $$
  \frac{\tilde{T}_{m,p}}{\sqrt{[4(m-1)+2\eta]\log p}}\to 1
  $$
in probability as $n\to\infty$.
\end{theorem}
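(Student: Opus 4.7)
The plan is to establish the law of large numbers $\tilde T_{m,p}/\sqrt{(4(m-1)+2\eta)\log p}\to 1$ in probability by proving matching upper and lower bounds of order $\sqrt{(4(m-1)+2\eta)\log p}$. The starting point for both directions is the variational formula $\lambda_1(\tilde W_S)=\max_{\|v\|=1}v^{\intercal}\tilde W_S v$ together with the variance identity: for any unit $v\in\mathbb{R}^m$,
\begin{equation*}
\mathrm{Var}(v^{\intercal}\tilde W_S v)=\eta\sum_i v_i^4 + 4\sum_{i<j}v_i^2 v_j^2 = 2+(\eta-2)\sum_i v_i^4.
\end{equation*}
For $0\le\eta\le 2$ this is monotone in $-\sum_i v_i^4$ and, by Cauchy--Schwarz, is maximized at $v=\mathbf 1_m/\sqrt m$, giving the maximum variance $\sigma^2:=(2(m-1)+\eta)/m$.

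For the upper bound I would fix a small constant $\epsilon>0$ and an $\epsilon$-net $\mathcal N_\epsilon$ of the unit sphere in $\mathbb{R}^m$ of size $|\mathcal N_\epsilon|\le (3/\epsilon)^m$, so that $\lambda_1(\tilde W_S)\le (1-2\epsilon)^{-1}\max_{v\in\mathcal N_\epsilon}v^{\intercal}\tilde W_S v$. A union bound over the $\binom{p}{m}$ subsets $S$ and the points of $\mathcal N_\epsilon$, combined with the Gaussian tail $\PP(v^{\intercal}\tilde W_S v>s)\le \exp(-s^2/(2\sigma^2))$, gives
\begin{equation*}
\PP\Big(\tilde T_{m,p}>(1+\delta)\sqrt{(4(m-1)+2\eta)\log p}\Big)\le \binom{p}{m}\Big(\frac{3}{\epsilon}\Big)^m \exp\!\big(-(1-2\epsilon)^2(1+\delta)^2 m\log p\big).
\end{equation*}
Since $\log\binom{p}{m}\le m\log(ep/m)\sim m\log p$ and $m=o((\log p)^{1/3}/\log\log p)$ under Assumption~2, choosing $\epsilon$ sufficiently small depending on $\delta$ makes the right-hand side vanish.

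For the lower bound I would use the test vector $v_S=\mathbf 1_S/\sqrt m$, so that $Y_S:=v_S^{\intercal}\tilde W_S v_S\sim N(0,\sigma^2)$ and $Y_S\le \lambda_1(\tilde W_S)$. Set $t_-=(1-\delta)\sqrt{(4(m-1)+2\eta)\log p}$; the expected number $N$ of subsets $S$ with $Y_S>t_-$ satisfies $\E[N]\to\infty$ by a direct computation with the standard Gaussian tail. I would then run a second moment argument using
\begin{equation*}
\mathrm{Cov}(Y_S,Y_{S'})=\frac{k(2k-2+\eta)}{m^2},\qquad k=|S\cap S'|,
\end{equation*}
and the bivariate Gaussian tail $\PP(Y_S>t_-,Y_{S'}>t_-)\le \exp\big(-t_-^2/((1+\rho_k)\sigma^2)\big)$ with $\rho_k=k(2k-2+\eta)/(m(2m-2+\eta))$, to check that the pairs with overlap $k\ge 1$ contribute $o(\E[N]^2)$ to $\E[N^2]$. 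Chebyshev's inequality then yields $\PP(N\ge 1)\to 1$.

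The principal obstacle is this second moment step, which parallels the Poisson/Chen--Stein step in the proof of Theorem~\ref{thm:wigner} (the case $\eta=2$) but must be carried out with the $\eta$-adjusted variance and covariances. The delicate point is verifying, for every overlap size $k\in\{1,\dots,m-1\}$ and small enough $\delta$, the elementary inequality $2m(1-\delta)^2(2k-2+\eta)<m(2m-2+\eta)+k(2k-2+\eta)$, which is exactly what controls the exponent in the pair-tail bound and prevents any overlap level from dominating $\E[N]^2$. This inequality is essentially trivial for $\eta$ bounded away from $0$ but needs a careful case-split near $\eta=0$ and small $k$; once it is verified, the argument closes.
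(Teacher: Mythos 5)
Your proposal is correct, but the lower bound follows a genuinely different route from the paper's. The upper bound is essentially the paper's argument: Lemma~\ref{lemma:MDP-wigner} is proved by exactly the $\epsilon$-net plus Gaussian-tail computation you describe, with the worst-case variance $2+(\eta-2)/m$ attained at $v=\mathbf 1_m/\sqrt m$ (your $(1-2\epsilon)^{-1}$ handling of the discretization error needs the usual caveat that it controls $\|\tW_S\|$ rather than $\lambda_1$, but symmetry of the law of $\tW$ disposes of this at the cost of a factor $2$, just as the paper's separate bound on $\PP(\lambda_1\geq r)$ does). For the lower bound, the paper does \emph{not} apply a second-moment method to the projections $Y_S=\mathbf 1_S^{\intercal}\tW_S\mathbf 1_S/m$; instead it defines the events $\tA_S$ on which \emph{every} upper-triangular entry of $\tW_S$ exceeds a threshold tuned to its variance, so that $\PP(\tA_S)$ and $\PP(\tA_{S_0}\cap\tA_{S_1})$ factor into products of univariate Gaussian tails and the overlap analysis reduces to the deterministic maximization in Lemma~\ref{lemma:max-l}. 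Your scheme keeps the correlated Gaussian family $\{Y_S\}$ and pays for it with the bivariate tail bound $\PP(Y_S>t,\,Y_{S'}>t)\leq\PP(Y_S+Y_{S'}>2t)\leq\exp\{-t^2/((1+\rho_k)\sigma^2)\}$; this is valid, and your covariance formula $k(2k-2+\eta)/m^2$ is right. The inequality you flag as the principal obstacle does hold, and more easily than you suggest: it is equivalent to $(2k-2+\eta)(2m-k)\leq m(2m-2+\eta)$, and since the left side is increasing in $k$ on $\{1,\dots,m-1\}$ its maximum is $(2m-4+\eta)(m+1)=m(2m-2+\eta)-(4-\eta)\leq m(2m-2+\eta)-2$ for $0\leq\eta\leq 2$. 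Thus even the $\delta=0$ version holds with margin, and the factor $(1-\delta)^2$ then yields an exponent $\leq -\delta k\log p+O(m\log m)\to-\infty$ under Assumption~2 (which gives $m\log m=o(\log p)$); contrary to your remark, the regime $\eta$ near $0$ with small $k$ is the \emph{easy} case (the left side is then near zero), and the tight case is $k=m-1$, $\eta=2$. One small correction of attribution: the paper explicitly avoids Chen--Stein for its Theorem~\ref{thm:wigner} lower bound and uses the same constructive entrywise second-moment device, so your argument parallels a variance computation, not a Poisson approximation. What your route buys is that it works with a single natural Gaussian statistic per subset; what the paper's buys is that all joint probabilities are exact products, avoiding any bivariate tail estimates.
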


\begin{remark}{\rm
A related open question is whether Theorem~\ref{thm:wishart} can be extended to other distribution of $x_{ij}$ for the Wishart distribution. We conjecture that with certain assumptions on the moments of $x_{ij}$ and under the asymptotic regime that $n$ is sufficiently large compared to $\log p$ and $m$, and $\frac{{\rm Var}(x_{11}^2)}{{\rm Var}(x_{11}x_{12})}\leq 2$, the asymptotic behavior of $\frac{T_{m,n,p}-n}{\sqrt{n}}$ will be similar to that of $\tilde{T}_{m,p}$ as is discussed in Theorem~\ref{thm:general-wig}. We leave this question for future research, because it requires development of some technical tools that are beyond the scope of the current paper.

Some special cases for this question have been answered in the literature for Wishart matrices with non-Gaussian entries. For example, if $m=2$, and $x_{ij}$ follows an asymmetric Rademacher distribution   $\PP(x_{ij}=1)=p$ and $\PP(x_{ij}=-1)=1-p$, then it is easy to check
\beaa
W_{\{i,j\}}=\begin{pmatrix}
	n & \sum_{k=1}^n x_{ki}x_{kj}\\
	\sum_{k=1}^n x_{ki}x_{kj} & n
\end{pmatrix}
\eeaa
and $\lambda_1(W_{[i,j]})= n+ |\sum_{k=1}^n x_{ki}x_{kj}|$. As a result, $T_{m,n,p}=\max_{1\leq i<j\leq p}\lambda_1(W_{[i,j]}])=n+\max_{1\leq i<j\leq p}|\sum_{k=1}^n x_{ki}x_{kj}|$. Analysis on similar quantities has been studied extensively in the literature including
\cite{jiang2004asymptotic,cai2012phase,zhou2007asymptotic,shao2014necessary,li2012jiang,
li2006some,li2010necessary,fan2018discoveries,cai2013distributions}.} The limiting distributions of $T_{m,n,p}$ are the Gumbel distribution.
\end{remark}

\subsection{Application to Construction of Compressed Sensing Matrices}
\label{sec:CS}

The main results given above have direct implications for the construction of compressed sensing  matrix $X_{n\times p}$ whose entries are  i.i.d. $N(0, {1\over n})$. As discussed in the introduction, the goal is to construct the measurement matrix $X$ with the number of measurements $n$ as small as possible relative to $p$, such that $k$-sparse signals $\beta$ can be accurately recovered.
For any given $t$, the RIP framework guarantees accurate recover of all $k$-sparse signals $\beta$ if the extreme eigenvalues, $\lambda_{\max}(tk)$ and $\lambda_{\min}(tk)$, of the $tk\times tk$ principal minors of the  Wishart matrix $W={X}^{\intercal}{X}$ satisfy
\beq
\label{rip.bound}
1-b_*(t) <  \lambda_{\min}(tk)\le \lambda_{\max}(tk) <  1+b_*(t)
\eeq
where $b_*(t)$ is given in \eqref{bt}.

By setting $m=tk$, $\lambda_{\max}(tk) = T_{m,n,p}/n$, and $\lambda_{\min}(tk) = {V}_{m,n,p}/n$,  it follows from Theorems \ref{thm:wishart} and Remark \ref{thm:wishart-min} that, under Assumption 1 in \eqref{nice_birth_1},
\beaa
\lambda_{\max}(tk) = 1 + 2\sqrt{tk \log p\over n} (1+ o_p(1))
\eeaa
and
\beaa
\lambda_{\min}(tk) = 1 - 2\sqrt{tk \log p\over n} (1+ o_p(1)).
\eeaa
On the other hand,  Assumption 1 implies that $\sqrt{\frac{m\log p}{n}}=\sqrt{\frac{tk\log p}{n}}=o(1)$. So the above asymptotic approximation gives $\lambda_{\max}(tk)=1+o_p(1)$ and $\lambda_{\min}(tk)=1+o_p(1)$, and hence \eqref{rip.bound} is satisfied. That is, Assumption 1 guarantees the exact recovery of all $k$ sparse signals in the noiseless case through the constrained $\ell_1$ minimization as explained in (\ref{RIP}) and (\ref{bt}).

\section{Technical Proofs}
\label{sec:proof}

Throughout the proof, as mentioned earlier, we will let $n\to\infty$ and $p=p_n\to \infty$; the integer $m\geq 2$ is either fixed or $m=m_n\to\infty$. The following notation will be adopted. We write $a_n=O(b_n)$ if there is a constant $\kappa$  {independent of} $n,p$ and $m$ (unless otherwise indicated) such that $|a_n|\leq \kappa b_n$. Moreover, we write $a_n=o(b_n)$, if there is a sequence $c_n$ independent of $n,p$ and $m$ such that $c_n\to 0$ and $|a_n|\leq c_n b_n$. Define $\sinf=\log\log\log p$. This is a sequence growing to infinity with a very slow speed compared to $n$ and $p$.

This section is organized as follows. We first introduce the main steps in proving Theorems \ref{thm:wishart} and \ref{thm:wigner} in Section \ref{sec:proof-strat}.  In Section~\ref{sec:proof-thm}, we present the proofs for Theorems \ref{thm:wishart}-\ref{thm:general-wig}, Corollary~\ref{coro:smaller-size}, and Remark~\ref{remark:implications}. The proofs for all technical lemmas are given in the Appendix. For reader's convenience, we list the content of each section below.

\medskip

\noindent {\bf Section \ref{sec:proof-strat}}. The Strategy of the Proofs for Theorems \ref{thm:wishart} and \ref{thm:wigner}.

\noindent {\bf Section \ref{sec:proof-thm}}. Proof of the results in Section~\ref{sec:main-results}.

 {\bf Section \ref{sec:proof-week}}. Proof of Theorem~\ref{thm:wigner}.

 {\bf Section \ref{Sec_thm:wishart}}. Proof of Theorem~\ref{thm:wishart}.

 {\bf Section \ref{Sec_coro:smaller}}. Proofs of  Theorem \ref{thm:general-wig} and Remark~\ref{remark:implications}.

\subsection{The Strategy of the Proofs for Theorems \ref{thm:wishart} and \ref{thm:wigner}}\label{sec:proof-strat}
We first explain the proof strategy for Theorem~\ref{thm:wigner} and then explain that for Theorem~\ref{thm:wishart}, since Wigner matrices have simpler structure than Wishart matrices.
The proof of Theorem~\ref{thm:wigner} consists of three steps. The first step is to find an upper bound on the right tail probability $\PP(\tilde{T}_{m,p}\geq 2\sqrt{m\log p}+t)$ for $t\geq \delta$. Our method here is to first develop a moderate deviation bound of $\PP(\lambda_1(\tW_{S})\geq 2\sqrt{m\log p}+t)$ for each $S\subset \{1,...,p\}$ and $|S|=m$, and then use the union bound to control $\PP(\tilde{T}_{m,p}\geq 2\sqrt{m\log p}+t)$. The second step is to find an upper bound on the left tail probability $\PP(\tilde{T}_{m,p}\leq 2\sqrt{m\log p}-t)$ for $t\geq \delta$. Our approach is to construct a sequence of events $E_{p,m}$ with high probability, such that when $E_{p,m}$ occurs, there exists $S\subset\{1,...,p\}$ satisfying $|S|=m$ and $\lambda_1(\tW_{S})\geq 2\sqrt{m\log p}-t$. The third step is to combine the left and right tail bounds obtained from the previous two steps to show \eqref{eq:mgf-thm-wig}.

The proof of Theorem~\ref{thm:wishart} is based on a similar strategy to that of Theorem~\ref{thm:wigner}. A new and key ingredient is to control the approximation speed of the Wishart matrix to the Wigner matrix (after normalization). Change-of-measure arguments are used to quantify the approximation speed in the  moderate deviation domain.

We point out that the proof for the asymptotic lower bound of $\tilde{T}_{m,p}$ in this paper is different from the standard technique for analyzing the maximum/minimum statistic for a large random matrix (see, e.g. \cite{jiang2004asymptotic}). In particular, the proof in \cite{jiang2004asymptotic}  employs the Chen-Stein's Poisson approximation method [see, e.g., \cite{Arratia:1990ff}] and the asymptotic independence. However, this method does not fit our problem. For this reason, new technique are developed and, in particular, we {\em construct} an event on which $\tilde{T}_{m,p}$ achieves the asymptotic lower bound.

\subsection{Proof of the results in Section~\ref{sec:main-results}}\label{sec:proof-thm}
As mentioned earlier, Wigner matrices have simpler structure than Wishart matrices. Thus, we first present the proof of Theorem~\ref{thm:wigner}, followed by the proof of Theorem~\ref{thm:wishart}.
At the end of the section, the proofs of Corollary~\ref{coro:smaller-size}, Theorem~\ref{thm:general-wig} and Remark~\ref{remark:implications} are presented.

In each proof we will need auxiliary results. To make the proof clearer, we place the proofs of the auxiliary results in the Appendix. Sometimes a statement or a formula holds as $n$ is sufficiently large. We will not say ``as $n$ is sufficiently  large" if the context is apparent.

\subsubsection{Proof of Theorem~\ref{thm:wigner}}\label{sec:proof-week}

To prove Theorem~\ref{thm:wigner}, we need the following two key results.
\begin{proposition}\label{east_wind} Suppose  Assumption 2 in \eqref{nice_birth_2} is satisfied. Recall $\tilde{T}_{m,p}$ defined as in \eqref{eq:tt-stat}.
Then,
\begin{equation}\label{eq:final-upper}
	\lim_{p\to\infty}\sup_{t\geq\delta}e^{\alpha t}t^2 \PP\left(\tilde{T}_{m,p}\geq 2\sqrt{m\log p}+t\right) = 0
\end{equation}
for every $\alpha>0$ and every $\delta>0$.
\end{proposition}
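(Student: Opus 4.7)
The plan is to follow the two-step strategy outlined in Section~\ref{sec:proof-strat}: first, establish a sharp single-minor tail bound for $\PP(\lambda_1(\tilde{W}_S)\ge 2\sqrt{m\log p}+t)$ whose leading-order exponent is precisely $-m\log p$; then apply the union bound over the $\binom{p}{m}\le e^{m\log p}$ index sets to cancel that leading term, leaving an exponent of the form $-t\sqrt{m\log p}-t^2/4$ that kills $e^{\alpha t}t^2$ uniformly for $t\ge \delta$.

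For the single-minor bound I will use a Gaussian $\epsilon$-net argument. For any unit vector $v\in\mathbb{R}^m$, the quadratic form $v^{\intercal}\tilde{W}_S v=\sum_i \tilde{w}_{ii}v_i^2+2\sum_{i<j}\tilde{w}_{ij}v_iv_j$ is centered Gaussian, and a direct variance computation using \eqref{eq:dist-wig} with $\|v\|=1$ gives variance exactly $2$. Take an $\epsilon$-net $\mathcal{N}_\epsilon$ of the unit sphere in $\mathbb{R}^m$ of size at most $(3/\epsilon)^m$; combining the standard inequality $\lambda_1(\tilde{W}_S)\le \|\tilde{W}_S\|_{\rm op}\le (1-2\epsilon)^{-1}\max_{v\in\mathcal{N}_\epsilon}|v^{\intercal}\tilde{W}_S v|$ with the Gaussian tail $\PP(|N(0,2)|\ge y)\le 2e^{-y^2/4}$ gives, for $a=2\sqrt{m\log p}+t$,
\[
\PP\bigl(\lambda_1(\tilde{W}_S)\ge a\bigr)\le 2(3/\epsilon)^m\exp\bigl(-(1-2\epsilon)^2\,a^2/4\bigr).
\]
I then choose $\epsilon=1/(m\log p)$, so that the slack contribution $4\epsilon\cdot m\log p$ is a constant and the net cost $m\log(3/\epsilon)=O(m\log\log p)$. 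Expanding $a^2/4=m\log p+t\sqrt{m\log p}+t^2/4$ yields
\[
\PP\bigl(\lambda_1(\tilde{W}_S)\ge 2\sqrt{m\log p}+t\bigr)\le C\exp\bigl(-m\log p-t\sqrt{m\log p}-t^2/4+O(m\log\log p)\bigr).
\]

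The union bound $\PP(\tilde T_{m,p}\ge\cdot)\le \binom{p}{m}\PP(\lambda_1(\tilde W_S)\ge\cdot)\le e^{m\log p}\PP(\lambda_1(\tilde W_S)\ge\cdot)$ then cancels the leading $-m\log p$, giving
\[
\PP\bigl(\tilde T_{m,p}\ge 2\sqrt{m\log p}+t\bigr)\le C\exp\bigl(-t\sqrt{m\log p}-t^2/4+O(m\log\log p)\bigr).
\]
Assumption~2 implies $m(\log\log p)^2=o(\log p)$, hence $m\log\log p=o(\sqrt{m\log p})$, so for $p$ large the error is absorbed into, say, half of the linear decay, leaving $\PP(\cdot)\le C\exp(-\tfrac12 t\sqrt{m\log p}-t^2/4)$ uniformly in $t\ge\delta$. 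Setting $c_p:=\tfrac14\sqrt{m\log p}\to\infty$, for $p$ large enough that $c_p>\alpha$ we have $e^{\alpha t}t^2\PP(\cdot)\le Ct^2\exp(-c_p t-t^2/4)$; splitting the supremum at a fixed cutoff $M$, the piece on $[\delta,M]$ is at most $CM^2 e^{-c_p\delta}\to 0$, while the piece on $(M,\infty)$ is bounded by $\sup_{t>M}Ct^2 e^{-t^2/4}$, which can be made arbitrarily small by taking $M$ large.

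The main obstacle is the trade-off in the $\epsilon$-net step: the slack factor $(1-2\epsilon)^2$ must perturb the leading exponent $-m\log p$ by only $o(\sqrt{m\log p})$ (forcing $\epsilon$ very small), while the net-size cost $m\log(1/\epsilon)$ must itself remain $o(\sqrt{m\log p})$. The choice $\epsilon=1/(m\log p)$ balances these two requirements precisely when $m(\log\log p)^2=o(\log p)$, which is exactly the content of Assumption~2; any weakening of this assumption would let the subleading terms swamp the $t\sqrt{m\log p}$ decay.
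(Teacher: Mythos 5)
Your proof is correct, and its overall architecture is the same as the paper's: a union bound over the $\binom{p}{m}\le p^m$ index sets against a per-minor tail bound whose exponent is $-x^2/4$ up to an $O(m\log\log p)$ correction, so that the $m\log p$ terms cancel and the surviving $-t\sqrt{m\log p}-t^2/4$ kills $e^{\alpha t}t^2$ uniformly for $t\ge\delta$. The genuine difference is in how the per-minor bound is obtained. The paper imports it wholesale: its Lemma on the tail of $\lambda_1(\tilde W_{\{1,\dots,m\}})$ is a rescaled version of Lemma 4.1 / inequality (4.8) of \cite{Jiang:2015ir}, giving $\exp\{-x^2/4+\kappa m\log x\}$ for $x>4\sqrt m$. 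You instead prove it from scratch by a fine $\epsilon$-net argument with $\epsilon=1/(m\log p)$, exploiting that $v^{\intercal}\tilde W_S v\sim N(0,2)$ for every unit $v$ (your variance computation is exactly the paper's $f(y)=2+(\eta-2)\sum y_i^4$ with $\eta=2$). This is self-contained and is in fact the same technique the paper uses elsewhere (Lemma~\ref{lemma:MDP-wigner}), just with a much finer net; what it buys is independence from the external reference, at the cost of a restriction to the Gaussian case with $\eta\le 2$ where the quadratic-form variance is bounded by $2$. One small imprecision to fix in writing it up: the slack $\bigl(1-(1-2\epsilon)^2\bigr)a^2/4\le \epsilon a^2$ with $a=2\sqrt{m\log p}+t$ is not a constant uniformly in $t$; it equals $4+4t/\sqrt{m\log p}+t^2/(m\log p)$, so for large $t$ it is not $O(m\log\log p)$. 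It is nonetheless harmless because the extra $t$- and $t^2$-terms are $o(1)$ fractions of the main negative terms $t\sqrt{m\log p}$ and $t^2/4$; you should absorb them there rather than into the $O(m\log\log p)$ error. With that bookkeeping corrected, your final limiting argument (splitting the supremum at a cutoff $M$) is valid, and the use of Assumption~2 to guarantee $m\log\log p=o(\sqrt{m\log p})$ matches the paper's.
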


\begin{proposition}\label{west_wind} Suppose  Assumption 2 in \eqref{nice_birth_2} is satisfied. Recall $\tilde{T}_{m,p}$ defined as in \eqref{eq:tt-stat}.
Then,
\begin{equation}\label{eq:final-upper}
	\lim_{p\to\infty}\sup_{t\geq\delta}e^{\alpha t}t^2 \PP\left(\tilde{T}_{m,p}\leq 2\sqrt{m\log p}-t\right) = 0
\end{equation}
for every  $\alpha>0$ and every $\delta>0$.
\end{proposition}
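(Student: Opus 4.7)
My plan is to produce, with high probability, a size-$m$ subset whose principal minor already achieves the target lower bound via the simplest linear test vector, and then to convert this existence statement into the uniform-in-$t$ tail decay demanded by the proposition. Step 1 is the \emph{reduction}: for each $|S|=m$, $\lambda_1(\tW_S)\ge v^{\intercal}\tW_S v$ for every unit $v\in\R^m$, so taking $v=\mathbf{1}/\sqrt{m}$ gives the linear functional $q_S := \frac{1}{m}\sum_{i\in S}\tilde w_{ii}+\frac{2}{m}\sum_{i<j,\,i,j\in S}\tilde w_{ij}$, which is $N(0,2)$ with covariance $\mathrm{Cov}(q_S,q_{S'})=2|S\cap S'|^2/m^2$. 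Thus $\tT\ge\max_{|S|=m}q_S$, and since the extremal-value heuristic gives $\max_{|S|=m}q_S\approx\sqrt{2\cdot 2\log\binom{p}{m}}\approx 2\sqrt{m\log p}$, the reduction is sharp at the required scale.

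Step 2 constructs the event \emph{by a second moment argument}. Set $u=u_{p,t}:=2\sqrt{m\log p}-t$ and $N(u):=\#\{S:|S|=m,\,q_S>u\}$; let $E_{p,m,t}:=\{N(u)\ge 1\}$. By Mill's ratio, $\E[N(u)]\asymp (m!\sqrt{m\log p})^{-1}\exp(t\sqrt{m\log p}-t^2/4)\to\infty$ for each $t\ge\delta$ under Assumption~2. For the variance I would split $\mathrm{Var}(N(u))=\sum_{k=0}^{m}N_k\Delta_k$ by overlap $k=|S\cap S'|$ (the $k=0$ term vanishing by independence) and use the bivariate tail bound $\PP(q_S>u,q_{S'}>u)\le 1-\Phi\bigl(u/\sqrt{1+k^2/m^2}\bigr)$ with $N_k=\binom{p}{m}\binom{m}{k}\binom{p-m}{m-k}$. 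A direct computation isolates a $p$-exponent $g(k)=2m-k-2m/(1+k^2/m^2)=-k(m-k)^2/(m^2+k^2)<0$ for $1\le k\le m-1$, together with an additional $t$-suppression of order $\exp\bigl(-2tLk^2/(m^2+k^2)\bigr)$ where $L=\sqrt{m\log p}$. Combining, $\mathrm{Var}(N(u))/\E[N(u)]^2\le C(m,p)\,e^{-cLt+t^2/4}$ for some $c>0$, and Paley--Zygmund then gives $\PP(\tT\le u)\le \PP(E_{p,m,t}^{\,c})\le C(m,p)\,e^{-cLt+t^2/4}$ for $\delta\le t\le 2L$.

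Step 3 is the uniform conclusion. Multiplying by $e^{\alpha t}t^2$, the exponent $(\alpha-cL)t+t^2/4$ is maximized on $[\delta,2L]$ at $t^*=2(cL-\alpha)$ with peak value $-(cL-\alpha)^2$, so $\sup_{\delta\le t\le 2L}e^{\alpha t}t^2\PP(\cdot)\le 4C(m,p)(cL-\alpha)^2 e^{-(cL-\alpha)^2}=o(1)$ as $p\to\infty$, since $L\to\infty$ kills polynomial factors. For $t>2L$, $u<0$, and the crude lower bound $\tT\ge\max_{k\le\lfloor p/m\rfloor}q_{B_k}$ over disjoint size-$m$ blocks yields $\PP(\tT\le u)\le\Phi(u/\sqrt{2})^{\lfloor p/m\rfloor}$, whose super-exponential decay in $p$ and in $|u|$ absorbs $e^{\alpha t}t^2$ without difficulty.

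The main obstacle is the variance estimate in Step~2, specifically the regime $k$ close to $m$: the decay $p^{g(k)}\approx p^{-(m-k)^2/(2m)}$ is only marginally negative, while the combinatorial factor $\binom{m}{k}(m-k)!^{-1}$ can be substantial. It is precisely the scale $m=o((\log p)^{1/3}/\log\log p)$ of Assumption~2 --- the same threshold appearing in the right-tail Proposition~\ref{east_wind} --- that simultaneously tames the combinatorial count and these weakly decaying $p$-factors; a careful split at $k\approx m/2$ between a ``Gaussian-dominated'' small-$k$ regime and a ``combinatorics-dominated'' large-$k$ regime is likely the technical key to closing the estimate.
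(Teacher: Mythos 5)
Your construction is genuinely different from the paper's. The paper's event $\tA_S$ (see \eqref{eq:tas}) asks that \emph{every} entry $\tW_{ij}$, $i\le j$, $i,j\in S$, exceed a common threshold $\tau_{m,p,t}$, so that $\PP(\tA_S)$ factors into $\binom{m+1}{2}$ independent Gaussian tails and the overlap probabilities \eqref{eq:intersection} factor likewise; you instead threshold the single Gaussian statistic $q_S=\mathbf 1^{\intercal}\tW_S\mathbf 1/m\sim N(0,2)$. Both routes feed the same Chebyshev/second-moment inequality \eqref{sun_set_flower}. Your version buys an exactly Gaussian bivariate structure and lets you run the second-moment argument all the way to $t=2\sqrt{m\log p}$ (the paper must stop at $2\sqrt{m\log p}-m\sinf$ so that $\tau_{m,p,t}\to\infty$ and \eqref{eq:tail-sdn} applies), but it pays in decorrelation: your overlap exponent $g(k)=-k(m-k)^2/(m^2+k^2)$ is only about $-1/(2m)$ at $k=m-1$, versus the paper's Lemma~\ref{lemma:max-l}, which yields an exponent bounded away from $0$ uniformly in $m$ (about $-1$ at $\varepsilon=0$). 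Your diagnosis that Assumption~2 ($m^2\log m=o((\log p)^{2/3})$, so $p^{-1/(2m)}$ still crushes $e^{O(m\log m)}$) is what saves the large-$k$ terms is accurate, but that estimate is only sketched, and it is the crux of your Step~2.

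The concrete error is in Step~3. The exponent $(\alpha-cL)t+t^2/4$ is \emph{convex} in $t$; the point $t^*=2(cL-\alpha)$ is its minimizer, not its maximizer, and $-(cL-\alpha)^2$ is the minimum value. The supremum over $[\delta,2L]$ sits at an endpoint, and at $t=2L$ the exponent equals $2\alpha L+(1-2c)L^2$, which tends to $+\infty$ whenever $c<1/2$ --- so the lumped bound ``$C(m,p)\,e^{-cLt+t^2/4}$ for some $c>0$'' is not sufficient as stated. The fix is to keep the two sources separate: the diagonal term contributes $1/\E[N(u)]\asymp m!\,L\,e^{-(Lt-t^2/4)}$, and $Lt-t^2/4\ge Lt/2$ on $t\le 2L$, so there the coefficient is effectively $c=1$ and beats the $t^2/4$; the overlap terms contribute $p^{g(k)}e^{-Lt\,k^2/(m^2+k^2)}$ with \emph{no} $+t^2/4$ (the $\rho t^2/(2(1+\rho))$ piece is absorbed since $\rho\le (m-1)^2/m^2<1$), and although their $t$-coefficient $Lk^2/(m^2+k^2)\ge L/(m^2+1)=\sqrt{\log p}/O(m^{3/2})$ is small, it still diverges under Assumption~2 and so dominates $\alpha t+2\log t$. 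With that bookkeeping (mirroring how the paper handles \eqref{eq:ratio-simp-b2} and \eqref{eq:ratio-simp-b1} as two separate terms) your Step~3 closes; as written, it does not.
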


Another auxiliary lemma is need. Its proof is put in the Appendix.
\begin{lemma}\label{lemma:integral}
Let $Z\geq 0$ be a random variable with $\E[e^{\alpha Z}]<\infty$ for all $\alpha>0$. Then
  \begin{equation}
    \E\big[e^{\alpha Z}\mathbf{1}_{\{Z\geq \delta\}}\big]
    = e^{\alpha\delta}\PP(Z\geq \delta)+\alpha \int_{\delta}^{\infty} e^{\alpha t}\PP(Z>t)dt
  \end{equation}
for every  $\alpha>0$ and every $\delta>0.$
\end{lemma}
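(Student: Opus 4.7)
The plan is to reduce the identity to a straightforward application of Fubini/Tonelli after an elementary pointwise rewrite. Starting from the fundamental theorem of calculus, for any $\omega$ with $Z(\omega) \geq \delta$ we have
\begin{equation*}
e^{\alpha Z(\omega)} \;=\; e^{\alpha \delta} + \alpha \int_{\delta}^{Z(\omega)} e^{\alpha t}\, dt \;=\; e^{\alpha \delta} + \alpha \int_{\delta}^{\infty} e^{\alpha t}\, \mathbf{1}_{\{t < Z(\omega)\}}\, dt.
\end{equation*}
Multiplying by $\mathbf{1}_{\{Z \geq \delta\}}$ and observing that for $t \geq \delta$ the event $\{t < Z\}$ is already contained in $\{Z \geq \delta\}$, I get the pointwise identity
\begin{equation*}
e^{\alpha Z}\mathbf{1}_{\{Z \geq \delta\}} \;=\; e^{\alpha \delta} \mathbf{1}_{\{Z \geq \delta\}} + \alpha \int_{\delta}^{\infty} e^{\alpha t}\, \mathbf{1}_{\{Z > t\}}\, dt.
\end{equation*}

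Next I would take expectations on both sides and swap the expectation with the integral over $t$. The result then reads exactly
\begin{equation*}
\E\big[e^{\alpha Z}\mathbf{1}_{\{Z \geq \delta\}}\big] \;=\; e^{\alpha \delta}\PP(Z \geq \delta) + \alpha \int_{\delta}^{\infty} e^{\alpha t}\, \PP(Z > t)\, dt,
\end{equation*}
which is the claim.

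The only subtlety is justifying the swap of $\E$ and $\int_{\delta}^{\infty}$. Since the integrand is nonnegative, Tonelli's theorem applies unconditionally, so the exchange is legal regardless of finiteness; the hypothesis $\E[e^{\beta Z}] < \infty$ for all $\beta > 0$ is only needed afterwards to ensure both sides are finite. For completeness I would note that finiteness of the right-hand side follows by choosing $\beta > \alpha$ and using Markov's inequality $\PP(Z > t) \leq e^{-\beta t}\E[e^{\beta Z}]$, which gives $e^{\alpha t}\PP(Z > t) \leq e^{-(\beta-\alpha) t}\E[e^{\beta Z}]$, an integrable bound on $[\delta, \infty)$. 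There is no real obstacle in this argument; the proof is essentially a one-line manipulation plus an invocation of Tonelli.
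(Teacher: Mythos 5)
Your proof is correct and follows essentially the same route as the paper's: both rewrite $e^{\alpha Z}$ via the fundamental theorem of calculus as an integral of $\alpha e^{\alpha t}$ and then interchange expectation and integration by Fubini/Tonelli (the paper integrates from $-\infty$ to $Z$ and splits the range at $\delta$ afterwards, whereas you peel off the $e^{\alpha\delta}$ term first — a purely cosmetic difference). Your added remarks on nonnegativity justifying Tonelli and on finiteness via Markov's inequality are sound but not needed beyond what the paper does.
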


\begin{proof}[Proof of Theorem~\ref{thm:wigner}]
%
%
By Propositions \ref{east_wind} and \ref{west_wind}, we have
\begin{equation}\label{eq:decay-rate-wig-final}
			\lim_{p\to\infty}\sup_{t\geq\delta }e^{\alpha t}t^2\PP\left(\big|\tilde{T}_{m,p}- 2\sqrt{m\log p}\big|\geq \delta\right)=0
\end{equation}
for any $\alpha >0$ and $\delta>0$.
Consequently, for given $\alpha>0$, there exists a sequence of positive numbers  $a_p\to 0$ such that
\begin{equation}
  e^{\alpha t}t^2\PP\left(\big|\tilde{T}_{m,p}- 2\sqrt{m\log p}\big|\geq t\right)\leq a_p
\end{equation}
for all $t\geq \delta$ as $p$ is sufficiently large. Now we estimate
\beaa
\E(e^{\alpha|\tilde{T}-2\sqrt{m\log p}|}\mathbf{1}_{\{|\tilde{T}-2\sqrt{m\log p}|\geq \delta\} }).
\eeaa
By applying Lemma \ref{lemma:integral} to $Z_{p,m}=|\tilde{T}-2\sqrt{m\log p}|$, we see
\begin{equation}
\begin{split}
  &\E\left[
  e^{\alpha |\tilde{T}_{m,p}-2\sqrt{m\log p}|}\mathbf{1}_{|\tilde{T}_{m,p}-2\sqrt{m\log p}|\geq \delta}
  \right]\\
	= & \E\left[
  e^{\alpha Z_{p,m}}\mathbf{1}_{\{Z_{p,m}\geq \delta\} }
  \right]\\
  =& e^{\alpha\delta}\PP(Z_{p,m}\geq \delta)
  + \int_{\delta}^{\infty}
  e^{\alpha t}\PP(Z_{p,m}\geq t)dt.
\end{split}
\end{equation}
According to \eqref{eq:decay-rate-wig-final}, the above display can be bounded from above by
\begin{equation}
  \delta^{-2}a_p + a_p\int_{\delta}^{\infty}t^{-2}dt,
\end{equation}
which tends to $0$ as $p\to\infty$. The proof is then complete.
\end{proof}

\medskip

Now we proceed to prove Propositions \ref{east_wind} and \ref{west_wind}.

\begin{proof}[Proof of Proposition \ref{east_wind}]
For any $t>0$, we have from the definition of $\tT$ that
\begin{equation}
\begin{split}
  &\PP\left(
\tT\geq 2\sqrt{m\log p} + t
  \right)\\
  =&\PP\Big(
\max_{S\subset\{1,...,p\},|S|=m}\lambda_1(\tW_S)\geq 2\sqrt{m\log p}+t
  \Big)\\
  =& \PP\Big(
\bigcup_{S\subset\{1,...,p\},|S|=m}\big\{\lambda_1(\tW_S)\geq2\sqrt{m\log p} + t\big\}
  \Big)\\
  \leq &\sum_{S\subset\{1,...,p\},|S|=m}
  \PP\Big(\lambda_1(\tW_S)\geq 2\sqrt{m\log p} +t \Big)\\
  \leq & p^m \PP\Big(\lambda_1(\tW_{\{1,...,m\}})\geq2\sqrt{m\log p} + t\Big),\label{eq:upper-split}
\end{split}
\end{equation}
where in the last inequality we use the fact that $W_S$ are identically distributed for all different $S$ with $|S|=m$. The following result  enables us to bound the last probability.
\begin{lemma}\label{eq:wig-margin-tail}
Let $\tW_{\{1,...,m\}}$ be defined as above \eqref{eq:tt-stat} with $S=\{1,...,m\}$. Then there is a constant $\kappa>0$ such that
\begin{equation}
	P\Big(\lambda_1(W_{\{1,...,m\}})\geq x \text{ or } \lambda_{m}(W_{\{1,...,m\}})\leq -x\Big)
\leq  e^{-(x^2/4)+\kappa m\log x }	
\end{equation}
for all $x>4\sqrt{m}$ and all $m\geq 2$.
\end{lemma}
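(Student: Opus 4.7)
The plan is to bound the operator norm $\|\tW_{\{1,\dots,m\}}\|_{\mathrm{op}}=\max\bigl(\lambda_1,-\lambda_m\bigr)$ by a standard $\varepsilon$-net argument on the unit sphere $S^{m-1}\subset\RR^m$, and then carefully choose $\varepsilon$ to make the single-vector Gaussian tail bound dominate the union-bound volume factor.

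\textbf{Step 1 (single direction).} For any fixed unit vector $v\in\RR^m$, write
$v^{\intercal}\tW v=\sum_i v_i^2\,\tilde w_{ii}+2\sum_{i<j}v_iv_j\,\tilde w_{ij}$, which is a Gaussian with mean $0$ and variance $2\sum_iv_i^4+4\sum_{i<j}v_i^2v_j^2 = 2(\sum_iv_i^2)^2=2$. Consequently $\PP(|v^{\intercal}\tW v|\ge t)\le 2e^{-t^2/4}$ for every $t>0$.

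\textbf{Step 2 (net reduction).} Fix $\varepsilon\in(0,1/2)$ and let $\mathcal N_\varepsilon$ be an $\varepsilon$-net of $S^{m-1}$ of cardinality $|\mathcal N_\varepsilon|\le (1+2/\varepsilon)^m$. A standard perturbation argument (using symmetry of $\tW$) gives
\begin{equation*}
\|\tW_{\{1,\dots,m\}}\|_{\mathrm{op}}\;\le\;\frac{1}{1-2\varepsilon}\,\max_{u\in\mathcal N_\varepsilon}|u^{\intercal}\tW u|.
\end{equation*}
Combining this with Step~1 and the union bound yields
\begin{equation*}
\PP\bigl(\|\tW\|_{\mathrm{op}}\ge x\bigr)\;\le\;2\,(1+2/\varepsilon)^m\,\exp\!\bigl(-(1-2\varepsilon)^2x^2/4\bigr).
\end{equation*}

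\textbf{Step 3 (balancing $\varepsilon$).} Choose $\varepsilon=m/x^2$, which lies in $(0,1/2)$ because $x>4\sqrt m$ implies $x^2>16m$. Then $(1-2\varepsilon)^2x^2/4\ge x^2/4-\varepsilon x^2=x^2/4-m$, while $m\log(1+2/\varepsilon)=m\log(1+2x^2/m)\le m(\log 3+2\log x)$. Folding the $m$, the $\log 3$ terms, and the factor $2$ into the multiplicative $e^{\kappa m\log x}$ (using $\log x\ge 1$ since $x>4\sqrt m\ge 4\sqrt 2>e$) gives the claimed bound $e^{-x^2/4+\kappa m\log x}$ for a universal constant $\kappa$.

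\textbf{Main obstacle.} The hard part is not any single estimate but the interplay of constants in Step~3: the exponent $1/4$ on the right-hand side must match exactly the single-direction Gaussian tail, which forces $\varepsilon\to 0$ as $x\to\infty$, and in turn the net cost $m\log(1/\varepsilon)$ must still be absorbed inside $\kappa m\log x$. The choice $\varepsilon=m/x^2$ is the unique scale (up to constants) that simultaneously keeps the perturbation loss $\varepsilon x^2$ of order $m$ and the net log-cardinality of order $m\log x$; any larger $\varepsilon$ spoils the exponent, any smaller one blows up the volume term. The rest of the argument is a routine verification that the choices work for every $x>4\sqrt m$ and every $m\ge 2$.
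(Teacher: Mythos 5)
Your proof is correct, but it takes a genuinely different route from the paper. The paper does not prove this lemma from scratch: it simply invokes the second inequality of (4.8) in Lemma 4.1 of Jiang and Li (2015), after observing that the Wigner matrix here is $\sqrt{2}$ times the one there, which yields a bound of the form $C\exp\{-x^2/4+Cm\log x+Cm\}$; the remaining work is only absorbing the $\log C$ and $Cm$ terms into $\kappa m\log x$ using $x>4\sqrt m$. You instead give a self-contained derivation via the standard $\varepsilon$-net method: the variance computation $\mathrm{Var}(v^{\intercal}\tW v)=2\sum_i v_i^4+4\sum_{i<j}v_i^2v_j^2=2$ for unit $v$ is exactly right for the normalization \eqref{eq:dist-wig}, the net reduction $\|\tW\|_{\mathrm{op}}\le(1-2\varepsilon)^{-1}\max_{u\in\mathcal N_\varepsilon}|u^{\intercal}\tW u|$ with $|\mathcal N_\varepsilon|\le(1+2/\varepsilon)^m$ is standard for symmetric matrices, and the $x$-dependent mesh $\varepsilon=m/x^2$ correctly trades the perturbation loss $\varepsilon x^2=m$ against a net cost $m\log(1+2x^2/m)=O(m\log x)$, both of which are absorbed into $\kappa m\log x$ since $\log x>1$ when $x>4\sqrt m$ and $m\ge 2$. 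Your approach buys a proof that does not depend on the external reference (and is in the same spirit as the covering arguments the paper itself uses for Lemmas~\ref{lemma:moderate-bound} and~\ref{lemma:MDP-wigner}, which discretize the sphere and union-bound over the net); the paper's citation buys brevity. The one point worth making explicit in a write-up is the identity of events $\{\lambda_1\ge x\ \text{or}\ \lambda_m\le -x\}=\{\|\tW\|_{\mathrm{op}}\ge x\}$, which you use implicitly.
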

Taking $x:=2\sqrt{m\log p}+t$ in the above lemma, we know $x>4\sqrt{m}$ as $n$ is large enough, and hence
\begin{equation}
\begin{split}
    &\log\left[e^{\alpha t}p^m \PP\left(\lambda_1(\tW_{\{1,...,m\}})\geq2\sqrt{m\log p} + t\right)\right]\\
  \leq& \alpha t + m\log p - \frac{1}{4}\left(2\sqrt{m\log p}+t\right)^2+\kappa{m}\log\left(2\sqrt{m\log p}+t\right)\\
  = & \alpha t -t\sqrt{m\log p} -\frac{1}{4}t^2\\
  & ~~~  + \kappa m\log\left(2\sqrt{m\log p}\right)+
  \kappa m\log\left(1+\frac{t}{2\sqrt{m\log p}}\right).
\end{split}
\end{equation}
Note that $-\frac{1}{4}t^2\leq 0$, $ \kappa m\log(2\sqrt{m\log p}) = O({m}\log\log p) $, and $\kappa m\log(1+\frac{t}{2\sqrt{m\log p}})=O(\frac{\sqrt{m}}{\sqrt{\log p}}t)< t$ as $p$ is sufficiently large. Thus, the above inequality further implies
\bea\label{eq:wig-tail-p}
& &    \log\left[e^{\alpha t}p^m \PP\left(\lambda_1(\tW_{\{1,...,m\}})\geq2\sqrt{m\log p} + t\right)\right]\nonumber\\
& \leq & - \left(\sqrt{m\log p} -\alpha - 1\right)t + O\left(m\log\log p\right)\nonumber\\
  & \leq &  -\frac{t}{2}\sqrt{m\log p}  + O\left(m\log\log p\right)
\eea
uniformly for all $t\geq 0$ as $p$ sufficiently large, where $\alpha>0$ is fixed.
With the above inequality, we complete the proof.

\end{proof}

\begin{proof}[Proof of Proposition \ref{west_wind}] Recall $\sinf=\log\log\log p$. The proof will be evidently finished if the following two limits hold. For each $\alpha>0$ and each $\delta>0$,
\begin{equation}\label{eq:final-case1}
	\lim_{p\to\infty}\sup_{\delta\leq t \leq 2 \sqrt{m\log p}- m\sinf}e^{\alpha t}t^2\PP\left(\tilde{T}_{m,p}\leq 2\sqrt{m\log p}-t\right) = 0
\end{equation}
 and
\begin{equation}\label{sweet_home}
		\lim_{p\to\infty}\sup_{t\geq2\sqrt{m\log p}-m\sinf }e^{\alpha t}t^2\PP\left(\tilde{T}_{m,p}\leq 2\sqrt{m\log p}-t\right)=0.
\end{equation}
We now verify the above two limits.

\medskip

\noindent{\bf The proof of (\ref{eq:final-case1})}. Recall
 \begin{equation}\label{eq:lb-spec}
  \lambda_1(A)\geq \frac{1}{k}\sum_{i=1}^k\sum_{j=1}^k a_{ij}
 \end{equation}
 for any $k\times k$ square and symmetric matrix $A=(a_{ij})_{1\leq i,j\leq k}$, where $\lambda_1(A)$ is the largest eigenvalue of $A.$

For each $S\subset\{1,...,p\}$ such that $|S|=m$ and $\tW_{S}=(\tW_{ij})$, set
\begin{equation}\label{eq:tas}
\tA_{S}=\left\{  \tW_{ij}\geq (1-
\varepsilon_{m,p,t})\sqrt{\frac{4\log p}{m}} \text{ for all } i,j\in S \text{ and } i\leq j
\right\},
\end{equation}
where
\begin{equation}\lbl{talk_hire}
\varepsilon_{m,p,t}:=(4m\log p)^{-1/2}t.
\end{equation}
If $0<t\leq 2\sqrt{m\log p}- m\sinf$ then $0<\varepsilon_{m,p,t}<1$.
 According to \eqref{eq:lb-spec}, if there exists $S_0\subset\{1,...,p\}$ such that  $|S_0|=m$ and $\tA_{S_0}$ occurs, then
\begin{equation}
  \tT\geq \lambda_1(\tW_{S_0})\geq m (1-\varepsilon_{m,p,t})\sqrt{\frac{4\log p}{m}} =2\sqrt{m \log p}-t.
\end{equation}
Define
\bea\label{eq:def-q}
  \tQ_{m,p}=\sum_{S\subset\{1,...,p\}:\, |S|=m} \mathbf{1}_{\tA_S},
\eea
where $\mathbf{1}_{\tA_S}$ is the indicator function of $\tA_S$. Then,
\begin{equation}\label{eq:tt-tq}
  \PP\left(\tT< 2\sqrt{m \log p}-t\right)\leq
  \PP\left(\tQ_{m,p}=0\right).
\end{equation}
For any random variable $Y$ with $\E Y>0$ and $\E (Y^2)<\infty$, we have
\bea\label{sun_set_flower}
\PP(Y\leq 0) &\leq & \PP(Y-\E Y\leq - \E Y)\nonumber\\
&\leq & \PP( (Y-\E Y)^2\geq (\E Y)^2)\nonumber\\
& \leq & \frac{Var(Y)}{( \E Y)^2}.
\eea
Applying this inequality to $\tQ_{m,p}$, we obtain
\begin{equation}\label{eq:lower-final}
  \PP\left(\tQ_{m,p}=0\right)=\PP\left(\tQ_{m,p}\leq 0\right)\leq \frac{Var(\tQ_{m,p})}{(\E\tQ_{m,p})^2}.
\end{equation}
We proceed to find a lower bound on $\E\tQ_{m,p}$ and an upper bound on $Var(\tQ_{m,p})$ in two steps.

\noindent{\bf Step 1: the estimate of $\E\tQ_{m,p}$}. Note that $\mathbf{1}_{\tA_S}$ are identically (not independently) distributed Bernoulli variables for different $S$ with success rate $\PP(\tA_{\{1,...,m\}})$. Thus, we have
\begin{equation}\label{eq:exp-Q}
  \E\tQ_{m,p}= \binom{p}{m} \PP(\tA_{S_0}),
\end{equation}
where we choose $S_0=\{1,...,m\}$ with a bit abuse of notation. For convenience, write
\bea\lbl{aha_11}
\tau_{m,p,t}=(1-\varepsilon_{m,p,t})\sqrt{\frac{4\log p}{m}}=\sqrt{\frac{4\log p}{m}}-\frac{t}{m}.
\eea
Since the upper triangular entries of $\tW$ are independent Gaussian variables, we have from \eqref{eq:tas} that
\begin{equation}
	 \PP(\tA_{S_0})
	 =\prod_{k=1}^m \PP\left(\tW_{kk}\geq \tau_{m,p,t}\right) \prod_{1\leq i<j\leq m} \PP\left(
  \tW_{ij}\geq \tau_{m,p,t}
  \right).
 \end{equation}
Recall that $\tW_{kk}\sim N(0,2)$ and $\tW_{ij}\sim N(0,1)$ for $i\neq j$. Hence
\begin{equation}\label{eq:pas}
  \PP(\tA_{S_0})= \bar{\Phi}\left(\frac{1}{\sqrt{2}}\tau_{m,p,t} \right)^m \bar{\Phi}\left(
\tau_{m,p,t}
\right)^{\frac{m(m-1)}{2}},
\end{equation}
where $\bar{\Phi}(z)=\int_{z}^{\infty} \frac{1}{\sqrt{2\pi}}e^{-\frac{w^2}{2}}dw$. It is well known that
\begin{equation}\label{eq:tail-sdn}
  \log \bar{\Phi}(x)=-  \frac{x^2}{2}-\log(x)-\log \sqrt{2\pi}+o(1)
\end{equation}
as $x\to\infty$.
Recall the assumption that $t\leq 2\sqrt{m\log p}-m\sinf$, so $\tau_{m,p,t}=\sqrt{\frac{4\log p}{m}}-\frac{t}{m}\geq \sinf\to\infty$. Thus, by \eqref{eq:pas} and \eqref{eq:tail-sdn},
\begin{equation}
\begin{split}
	& \log \PP(\tA_{S_0})\\
  = & -\frac{1}{2}m \cdot\frac{1}{2}\cdot\tau_{m,p,t}^2-m\log \frac{\tau_{m,p,t}}{\sqrt{2}}\\
  &-\frac{1}{2}\cdot\frac{m(m-1)}{2}\left(\tau_{m,p,t}\right)^2 - \frac{m(m-1)}{2}\log\left(
\tau_{m,p,t}
\right) + O(m^2).
\end{split}
\end{equation}
Note that $1>1-\varepsilon_{m,p,t}\geq \xi_p\sqrt{\frac{m}{4\log p}}$ since $0<t\leq 2\sqrt{m\log p}- m\sinf$. It follows that
$|\log(1-\varepsilon_{m,p,t})|=O\left(\log \sqrt{\frac{\log p}{m\xi_p^2}}\right)=O(\log\log p)$.
Also, $\log\sqrt{\frac{4\log p}{m}} =O(\log\log p)$. As a result, from (\ref{aha_11}) we have
\beaa
 \tau_{m,p,t}^2=(1-\varepsilon_{m,p,t})^2\cdot \frac{4\log p}{m}\
\ \ \mbox{and}\ \  \log\left(\tau_{m,p,t}
\right)=O(\log\log p).
\eeaa
It follows that
\bea\label{eq:gaussian-approx}
	\log \PP(\tA_{S_0})
&=& -\frac{1}{4}m^2\left(\tau_{m,p,t}\right)^2+ O(m^2\log\log p )\notag\\
& =& - (1-\varepsilon_{m,p,t})^2 m\log p + O( m^2\log\log p ).
\eea
Combining this with \eqref{eq:exp-Q}, we see
\begin{equation}\label{eq:intermediate}
	 \log (\E\tQ_{m,p})=\log \binom{p}{m}- (1-\varepsilon_{m,p,t})^2 m\log p + O(m^2\log\log p ).
\end{equation}
To control $\binom{p}{m}$, we need the next result, which will be proved in the Appendix.
\begin{lemma}\label{lemma:p-choose-m}
For all $m\geq p\geq 1$, we have
\begin{equation}
  m\log p-m\log m\leq\log\binom{p}{m}\leq m\log p + m - m\log m.
\end{equation}
\end{lemma}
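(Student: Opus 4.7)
The plan is to prove the two-sided bound on $\log\binom{p}{m}$ by elementary manipulation of the factorial expression, using only the standard inequality $m! \geq (m/e)^m$ for the upper bound and a monotonicity observation for the lower bound. (Note that the statement is meaningful only when $p \geq m \geq 1$; the displayed ``$m \geq p \geq 1$'' is a typo since otherwise $\binom{p}{m}$ would vanish.)

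For the upper bound, I would write $\binom{p}{m} = \frac{p(p-1)\cdots(p-m+1)}{m!} \leq \frac{p^m}{m!}$ and then invoke the elementary lower bound $m! \geq (m/e)^m$ (which follows from $e^m = \sum_{k\geq 0} m^k/k! \geq m^m/m!$). This yields $\binom{p}{m} \leq (ep/m)^m$; taking logarithms gives the claimed $\log\binom{p}{m} \leq m\log p + m - m\log m$.

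For the lower bound, the key observation is that for $0 \leq k \leq m-1$ and $p \geq m$, $\frac{p-k}{m-k} \geq \frac{p}{m}$, which follows from the identity $\frac{p-k}{m-k} - \frac{p}{m} = \frac{k(p-m)}{m(m-k)} \geq 0$. Taking the product over $k=0,\dots,m-1$ yields $\binom{p}{m} = \prod_{k=0}^{m-1} \frac{p-k}{m-k} \geq (p/m)^m$, and taking logarithms gives $\log\binom{p}{m} \geq m\log p - m\log m$.

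There is essentially no obstacle here; both bounds are classical and the argument is just two short chains of inequalities. The only thing to be mindful of is the boundary case $p=m$, where $\binom{p}{p}=1$ and both bounds reduce to $0 \leq 0 \leq m$, and the case $m=1$, where both sides of each inequality collapse to $\log p$ (up to an additive $1$ on the right).
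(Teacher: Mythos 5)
Your proof is correct and follows essentially the same route as the paper's: the lower bound via $\frac{p-k}{m-k}\geq\frac{p}{m}$ giving $\binom{p}{m}\geq(p/m)^m$, and the upper bound via $\binom{p}{m}\leq p^m/m!$ together with $m!\geq(m/e)^m$. The only (cosmetic) difference is that you justify $m!\geq(m/e)^m$ by the elementary series inequality $e^m\geq m^m/m!$, whereas the paper invokes Stirling's formula $m!\geq\sqrt{2\pi}\,m^{m+1/2}e^{-m}$; you are also right that the hypothesis ``$m\geq p\geq 1$'' is a typo for ``$p\geq m\geq 1$''.
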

Using the above lemma, \eqref{eq:intermediate}, and note that $ m\log m=O(m^2\log\log p)$, we have
\begin{equation}\label{eq:etq-approx}
  \log (\E\tQ_{m,p})
  = \left[1- (1-\varepsilon_{m,p,t})^2\right] m\log p + O(m^2\log\log p ).
\end{equation}

\noindent{\bf Step 2: the estimate of $Var(\tQ_{m,p})$}. Reviewing $\tQ_{m,p}$ in (\ref{eq:def-q}), we have
\bea\label{eq:var-q1}
  Var(\tQ_{m,p}) &=& \E \tQ_{m,p}^2 -(\E\tQ_{m,p})^2\nonumber\\
   &=& \sum_{S_1,S_2\subset\{1,..,p\}, |S_1|=|S_2|=m} \PP(\tA_{S_1}\cap\tA_{S_2})- (\E\tQ_{m,p})^2.
\eea
Note that $\PP(\tA_{S_1}\cap\tA_{S_2})$ is determined by $|S_1\cap S_2|$ and $m$. By (\ref{eq:tas}),
\begin{equation}
\begin{split}
  &\sum_{S_1,S_2\subset\{1,..,p\}, |S_1|=|S_2|=m} \PP\left(\tA_{S_1}\cap\tA_{S_2}\right)\\
  =&\sum_{l=0}^m \sum_{|S_1\cap S_2|=l, |S_1|=|S_2|=m}\PP\left(\tA_{S_1}\cap\tA_{S_2}\right)\\
  =&
  \sum_{l=0}^{m} \binom{p}{l} \binom{p-l}{m-l}\binom{p-m}{ m-l  }\PP\left( \tA_{ \{1,...,m\}}\cap \tA_{ \{1,...,l,m+1,...,2m-l\}}\right)\\
  =& \sum_{l=0}^{m}\frac{p!}{l!(m-l)!(m-l)!(p-2m+l)!} \PP\left( \tA_{ \{1,...,m\}}\cap \tA_{ \{1,...,l,m+1,...,2m-l\}}\right)
\end{split}
\end{equation}
Single out the terms where $l=0$ and $l=m$, we further have
\begin{equation}\label{eq:var-q}
	\begin{split}
	&\sum_{S_1,S_2\subset\{1,..,p\}, |S_1|=|S_2|=m} \PP(\tA_{S_1}\cap\tA_{S_2})\\
  = &\frac{p!}{m!m!(p-2m)!}\PP\left( \tA_{ \{1,...,m\}}\right)^2+ \binom{p}{m}\PP\left( \tA_{ \{1,...,m\}}\right)\\
  &+ \sum_{l=1}^{m-1}\frac{p!}{l!(m-l)!(m-l)!(p-2m+l)!} \PP\left( \tA_{ \{1,...,m\}}\cap \tA_{ \{1,...,l,m+1,...,2m-l\}}\right).
	\end{split}
\end{equation}
On the other hand,  $\E\tQ_{m,p}= \binom{p}{m}P\big( \tA_{ \{1,...,m\}}\big)$ and hence
\begin{eqnarray}
\label{eq:var-q2}
	  (\E\tQ_{m,p})^2
	  &=& \frac{p!^2}{m!^2(p-m)!^2}P\left( \tA_{ \{1,...,m\}}\right)^2 \notag\\
 & = & \frac{p!}{m!m!(p-2m)!}P\left( \tA_{ \{1,...,m\}}\right)^2\cdot \frac{p!(p-2m)!}{(p-m)!^2}.
 \end{eqnarray}
Combining \eqref{eq:var-q1}, \eqref{eq:var-q} and \eqref{eq:var-q2}, we arrive at
\begin{equation}
\begin{split}
  &Var(\tQ_{m,p})\\
  =&   (\E\tQ_{m,p})^2 \left(\frac{(p-m)!^2}{p!(p-2m)!}-1\right)+\E\tQ_{m,p}\\
  &+ \sum_{l=1}^{m-1}\frac{p!}{l!(m-l)!(m-l)!(p-2m+l)!} \PP\left( \tA_{ \{1,...,m\}}\cap \tA_{ \{1,...,l,m+1,...,2m-l\}}\right).
\end{split}
\end{equation}
Observe that $\frac{p!}{(p-2m+l)!}=p(p-1)\cdots (p-2m+l-1)\leq p^{2m-l}$ and $\frac{1}{l!(m-l)!(m-l)!}\leq 1$. It follows that
\begin{equation}\label{eq:var-q-split}
  \begin{split}
  &Var(\tQ_{m,p})\\
  \leq &  \E\tQ_{m,p}+ (\E\tQ_{m,p})^2 \left(\frac{(p-m)!^2}{p!(p-2m)!}-1\right)\\
  &+ m \max_{l=1,...,m-1}p^{2m-l} \PP\left( \tA_{ \{1,...,m\}}\cap \tA_{ \{1,...,l,m+1,...,2m-l\}}\right).
\end{split}
\end{equation}
Similar to \eqref{eq:pas} we have
\begin{equation}\label{eq:intersection}
\begin{split}
  &\PP\left( \tA_{ \{1,...,m\}}\cap \tA_{ \{1,...,l,m+1,...,2m-l\}}\right)\\
= &\bar{\Phi}\left(\frac{1}{\sqrt{2}}\tau_{m,p,t}\right)^{2m-l}\bar{\Phi}\left(\tau_{m,p,t}\right)^{\frac{m(m-1)}{2}\cdot 2-\frac{l(l-1)}{2}  }.
\end{split}
\end{equation}
Again, we find an approximation for the above display by using \eqref{eq:tail-sdn} and simplifying it. We arrive at
\begin{equation}
\begin{split}
    &\log \PP\left( \tA_{ \{1,...,m\}}\cap \tA_{ \{1,...,l,m+1,...,2m-l\}}\right)\\
\leq &-\frac{1}{m}(2m^2-l^2){(1-\varepsilon_{m,p,t})^2\log p}+ O(m^2\log\log p).
\end{split}
\end{equation}
Therefore, for the last term in (\ref{eq:var-q-split}), we see
\begin{equation}\label{eq:inter-upper}
  \begin{split}
& \log\left[mp^{2m-l}\PP\left( \tA_{ \{1,...,m\}}\cap \tA_{ \{1,...,l,m+1,...,2m-l\}}\right)\right]\\
\leq & \log m+(2m-l)\log p- \frac{1}{m}(2m^2-l^2) (1-\varepsilon_{m,p,t})^2{\log p}+O(m^2\log\log p)\\
=&
\left[2m-l - \frac{1}{m}(1-\varepsilon_{m,p,t})^2(2m^2-l^2) \right]\log p+O(m^2\log\log p)
.
\end{split}
\end{equation}
The following lemma enables us to evaluate the coefficient of $\log p.$
\begin{lemma}\label{lemma:max-l}
For any $0<\varepsilon<1$ and $m\geq 2$, we have
  \begin{equation}
  \begin{split}
    & \max_{l=1,...,m-1}\Big\{(2m-l)- \frac{2m^2-l^2}{m}(1-\varepsilon)^2\Big\}\\
    = &(2m-1)-\big(2m -\frac{1}{m}\big)(1-\varepsilon)^2\\
    = & 2m \big[1-(1-\varepsilon)^2 \big] -\Big[1-\frac{1}{m}(1-\varepsilon)^2\Big].
  \end{split}
      \end{equation}
\end{lemma}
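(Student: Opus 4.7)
The plan is to treat the quantity as a function of a continuous variable and exploit convexity so the maximum is attained at an endpoint of $[1,m-1]$. Specifically, let
\[
f(l) = (2m-l) - \frac{2m^2-l^2}{m}(1-\varepsilon)^2.
\]
Then $f'(l) = -1 + \frac{2l}{m}(1-\varepsilon)^2$ and $f''(l) = \frac{2(1-\varepsilon)^2}{m} > 0$, so $f$ is strictly convex in $l$. Consequently the maximum of $f$ over the integer interval $\{1, \ldots, m-1\}$ is attained at one of the endpoints $l=1$ or $l=m-1$.

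Next I would compute the two endpoint values. At $l=1$,
\[
f(1) = (2m-1) - \Bigl(2m - \tfrac{1}{m}\Bigr)(1-\varepsilon)^2,
\]
which is exactly the right-hand side we want. At $l=m-1$,
\[
f(m-1) = (m+1) - \Bigl(m + 2 - \tfrac{1}{m}\Bigr)(1-\varepsilon)^2.
\]
A direct subtraction yields
\[
f(1) - f(m-1) = (m-2) - (m-2)(1-\varepsilon)^2 = (m-2)\bigl[1-(1-\varepsilon)^2\bigr] \geq 0,
\]
using $m\geq 2$ and $0<\varepsilon<1$. Hence $\max_{l=1,\ldots,m-1} f(l) = f(1)$.

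The last step is just the algebraic identity
\[
(2m-1) - \Bigl(2m-\tfrac{1}{m}\Bigr)(1-\varepsilon)^2 = 2m\bigl[1-(1-\varepsilon)^2\bigr] - \Bigl[1-\tfrac{1}{m}(1-\varepsilon)^2\Bigr],
\]
which is verified by expanding both sides. There is really no obstacle here: the only thing to notice is the sign of $f''$, which makes the problem a one-line convexity argument rather than a tedious case analysis. If I had not noticed convexity, a backup would be to use $f'(l) = -1 + \frac{2l}{m}(1-\varepsilon)^2$ directly; since $(1-\varepsilon)^2 < 1$, the critical point $l^\ast = m/[2(1-\varepsilon)^2]$ lies above $m/2$, and one can check by comparing slopes that $f$ is smaller on $[1,m/2]$ than at $l=1$, and the rise on $[l^\ast, m-1]$ is insufficient to surpass $f(1)$ — the clean convexity argument above subsumes this.
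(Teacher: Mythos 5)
Your proposal is correct and follows essentially the same route as the paper: both observe that the expression is convex in $l$, so the maximum over $\{1,\dots,m-1\}$ is attained at an endpoint, and then verify $f(1)\geq f(m-1)$ (you carry out the subtraction explicitly, obtaining $(m-2)[1-(1-\varepsilon)^2]\geq 0$, where the paper merely calls this step trivial). No gaps.
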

Applying the above lemma to \eqref{eq:inter-upper}, we get
\begin{equation}\label{eq:inter-upper-final}
\begin{split}
    &m\max_{l=1,...,m-1}p^{2m-l}\PP\left( \tA_{ \{1,...,m\}}\cap \tA_{ \{1,...,l,m+1,...,2m-l\}}\right)\\
  \leq & \exp\Big\{2m \big[1-(1-\varepsilon_{m,p,t})^2 \big]\log p\\ &-\big[1-\frac{1}{m}(1-\varepsilon_{m,p,t})^2\big]\log p+ O(m^2\log\log p)\Big\}.
\end{split}
\end{equation}
This inequality together with \eqref{eq:etq-approx} implies that
\begin{equation}\label{eq:ratio-intermed}
	\begin{split}
    &{\left(\E \tilde{Q}_{m,p}\right)^{-2}}m\max_{l=1,...,m-1}p^{2m-l}\PP\left( \tA_{ \{1,...,m\}}\cap \tA_{ \{1,...,l,m+1,...,2m-l\}}\right)\\
  \leq & \exp\left\{-\left[1-\frac{1}{m}(1-\varepsilon_{m,p,t})^2\right]\log p+O(m^2\log\log p)\right\}.
\end{split}
\end{equation}
Combining the above display with \eqref{eq:var-q-split}, we arrive at
\begin{equation}\label{eq:var-exp-tq}
\begin{split}
    &\frac{Var\left(\tQ_{m,p}\right)}{\left(\E\tQ_{m,p}\right)^2}\\
      \leq &  \exp\left\{-\left[1-\frac{1}{m}(1-\varepsilon_{m,p,t})^2\right]\log p+O(m^2\log\log p)\right\}\\
      &+\left(\E\tQ_{m,p}\right)^{-1}+ \frac{(p-m)!^2}{p!(p-2m)!}-1.
\end{split}
\end{equation}
\begin{lemma}\label{lemma:combine}
For all integers $p\geq m\geq 1$ satisfying $2m<p$, we have
  \begin{equation}
    \frac{(p-m)!^2}{p!(p-2m)!}<1.
  \end{equation}
\end{lemma}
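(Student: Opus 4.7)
The plan is to recognize the left-hand side as a ratio of binomial coefficients and exploit the strict monotonicity of $\binom{n}{m}$ in $n$. Specifically, I would first rewrite
\[
\frac{(p-m)!^2}{p!\,(p-2m)!} \;=\; \frac{\binom{p-m}{m}}{\binom{p}{m}},
\]
which reduces the claim to showing $\binom{p-m}{m}<\binom{p}{m}$ whenever $p\geq 2m+1$ and $m\geq 1$.

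A clean way to see the inequality is to pair factors term by term. Writing
\[
\frac{\binom{p}{m}}{\binom{p-m}{m}} \;=\; \prod_{i=0}^{m-1}\frac{p-i}{p-m-i},
\]
each factor in the product satisfies $\frac{p-i}{p-m-i}>1$, since $m\geq 1$ and the denominator $p-m-i\geq p-2m+1\geq 1$ is strictly positive under the hypothesis $2m<p$. Since $m\geq 1$, the product contains at least one factor that is strictly greater than $1$, so the whole product exceeds $1$. Inverting gives the desired strict inequality.

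I do not anticipate any real obstacle here; the only subtle points are (i) ensuring that every factor in the denominator is positive (this uses the assumption $2m<p$ to guarantee $p-2m+1\geq 1$, so that the factorials are well-defined and the ratios make sense), and (ii) noting strictness of the inequality, which follows because at least one pair $(p-i,p-m-i)$ with $m\geq 1$ differs by a positive integer. No asymptotic analysis, generating-function machinery, or auxiliary lemma from earlier in the paper is required; the argument is purely an elementary comparison of products.
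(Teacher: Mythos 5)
Your proof is correct and is essentially the paper's argument: the paper also writes the quantity as the product $\prod_{i=0}^{m-1}\frac{p-m-i}{p-i}$ and compares factors pairwise, with your binomial-coefficient reformulation being only a cosmetic repackaging of the same termwise comparison. No further comment is needed.
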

Therefore,
\begin{equation}\label{eq:ratio-tq-simp}
	\begin{split}
		    &\frac{Var(\tQ_{m,p})}{(\E\tQ_{m,p})^2}\\
      \leq &  \exp\left\{-\left[1-\frac{1}{m}(1-\varepsilon_{m,p,t})^2\right]\log p+O(m^2\log\log p)\right\}+\left(\E\tQ_{m,p}\right)^{-1}.
	\end{split}
\end{equation}
We now study the last two terms one by one. For $m\geq 2$,
\begin{equation}\label{eq:ratio-simp-b2}
\begin{split}
&-\left[1-\frac{1}{m}(1-\varepsilon_{m,p,t})^2\right]\log p
  +O(m^2\log\log p)\\
  \leq &-\frac{1}{2}\log p +O(m^2\log\log p)\\
  \leq & -\frac{1}{4}\log p
\end{split}
\end{equation}
for $n$ sufficiently large under Assumption 2 in \eqref{nice_birth_2}. Recalling $\varepsilon_{m,p,t}=(4m\log p)^{-1/2}t$, we see from \eqref{eq:etq-approx}  that
\begin{equation}\label{eq:ratio-simp-b1}
\begin{split}
&\log \left(\E\tQ_{m,p}\right)^{-1}\\
  =
  & -\left[1- (1-\varepsilon_{m,p,t})^2\right] m\log p + O(m^2\log\log p)\\
  \leq & -\varepsilon_{m,p,t}m\log p + O(m^2\log\log p)\\
  \leq & - \frac{t}{2}\sqrt{m\log p}  + O(m^2\log\log p).
\end{split}
\end{equation}
Combining \eqref{eq:ratio-tq-simp}, \eqref{eq:ratio-simp-b2} and \eqref{eq:ratio-simp-b1}, we arrive at
\begin{equation}
  \frac{Var(\tQ_{m,p})}{(\E\tQ_{m,p})^2}
  \leq \exp\left\{- \frac{t}{2}\sqrt{m\log p}  + O(m^2\log\log p)\right\}+
  \exp\left\{-\frac{1}{4}\log p\right\}.
\end{equation}
This together with \eqref{eq:tt-tq} and \eqref{eq:lower-final} yields
\begin{equation}
\begin{split}
	  &\PP\left(\tilde{T}_{m,p}\leq 2\sqrt{m\log p}-t\right)\\
  \leq &
  \exp\left\{- \frac{t}{2}\sqrt{m\log p}  + O(m^2\log\log p)\right\}+\frac{1}{p^{1/4}}
\end{split}
\end{equation}
uniformly for all $\delta\leq t\leq 2\sqrt{m\log p}- m\sinf$. Consequently, we get (\ref{eq:final-case1}).

\medskip

\noindent{\bf The proof of (\ref{sweet_home})}.
For any $S\subset\{1,...,p\}$ with $|S|=m$, write $\tW_S=(\tW_{ij})_{i,j\in S}$. Note that $\lambda_1(\tW_S)\geq \max_{i\in S}\tW_{ii}$. Thus,
\begin{equation}
  \tilde{T}_{m,p}\geq \max_{S\subset\{1,...,p\},|S|=m} \lambda_1(\tW_S)
  \geq \max_{1\leq i\leq p}\tW_{ii}.
\end{equation}
As a result,
\beaa
	 \PP\Big(\tilde{T}_{m,p}\leq 2\sqrt{m\log p}-t\Big)
  &\leq& \PP\Big(\max_{1\leq i\leq p}\tW_{ii}\leq 2\sqrt{m\log p}-t\Big)\\
 & =& {\Phi}\Big(\sqrt{2m\log p}-\frac{1}{\sqrt{2}}t\Big)^p,
\eeaa
 where the function $\Phi(z)=\int_{-\infty}^z \frac{1}{\sqrt{2\pi}}e^{-\frac{s^2}{2}}ds$ for $z\in \mathbb{R}$.
To proceed, we discuss two scenarios: $2\sqrt{m\log p}-m\sinf\leq t\leq 4\sqrt{m\log p}$ and $t> 4\sqrt{m\log p}$. For $2\sqrt{m\log p}-m\sinf\leq t\leq 4\sqrt{m\log p}$,  we have
\begin{equation}
  \begin{split}
    &{\Phi}\left(\sqrt{2m\log p}-\frac{t}{\sqrt{2}}\right)^p\\
    \leq & {\Phi}\left(\sqrt{2m\log p}-\frac{2\sqrt{m\log p}-m\sinf}{\sqrt{2}}\right)^p \\
    =&{\Phi}\left(\frac{m\sinf}{\sqrt{2}}\right)^p\\
    = & \exp\left\{
    p\log\left(1-\bar{\Phi}\left(\frac{m\sinf}{\sqrt{2}}\right)\right)
    \right\}\\
    \leq &\exp\left\{
    - p \bar{\Phi}\left(\frac{m\sinf}{\sqrt{2}}\right)
        \right\},
  \end{split}
\end{equation}
where $\bar{\Phi}(z)=1-\Phi(z)$ for any $z\in \mathbb{R}$ and the inequality $\log (1-x) \leq -x$ for any $x<1$ is used in the last step.
Note $\bar{\Phi}\left(\frac{1}{\sqrt{2}}m\sinf\right)=(1+o(1)) \frac{1}{\sqrt{4\pi}m\sinf}e^{-\frac{m^2\sinf^2}{4}}$ and $ p^{0.1}(\sinf)^{-1} e^{-\frac{m^2\sinf^2}{4}}\to\infty $ since $\sinf=\log\log\log p$. Thus,
\begin{equation}\label{eq:very-small-t-first-case}
	\begin{split}
		{\Phi}\Big(\sqrt{2m\log p}-\frac{t}{\sqrt{2}}\Big)^p\leq \exp\Big\{
     - p^{0.9} m
     \Big\},
	\end{split}
\end{equation}
for sufficiently large $p$. This further implies
\begin{equation}
\begin{split}
	&\lim_{p\to\infty}\sup_{2\sqrt{m\log p}-m\sinf\leq t\leq 4\sqrt{m\log p} }e^{\alpha t}t^2\PP\left(\tilde{T}_{m,p}\leq 2\sqrt{m\log p}-t\right)\\
	\leq & \limsup_{p\to\infty}\exp\left\{
     - p^{0.9} m + \alpha\cdot 4\sqrt{m\log p}+ 2\log\left( 4\sqrt{m\log p}\right)
     \right\}\\
     =&0 \label{eq:final-scenario1}
\end{split}
\end{equation}
for any $\alpha>0$. Note that $\Phi(-x)=\bar{\Phi}(x)\leq \frac{1}{\sqrt{2\pi}\,x}e^{-x^2/2}\leq e^{-x^2/2}$ for any $x\geq 1$.
Then, for the other scenario where $t\geq 4\sqrt{m\log p}$, we have
\begin{equation}
  {\Phi}\left(\sqrt{2m\log p}-\frac{t}{\sqrt{2}}\right)^p
  \leq {\Phi}\left(-\frac{t}{2\sqrt{2}}\right)^p
  \leq \exp\left\{-\frac{pt^2}{16} \right\}
\end{equation}
as $n$ is large enough. Thus,
\begin{equation}\label{eq:final-scenario2}
	\begin{split}
	&\lim_{p\to\infty}\sup_{t\geq 4\sqrt{m\log p}  }e^{\alpha t}t^2\PP\left(\tilde{T}_{m,p}\leq 2\sqrt{m\log p}-t\right)\\
	\leq & \limsup_{p\to\infty}\sup_{ t\geq 4\sqrt{m\log p} }\exp\left\{-\frac{pt^2}{16} +\alpha t+2\log t\right\}\\
     =& 0
\end{split}
\end{equation}
for any $\alpha>0.$ Joining \eqref{eq:final-scenario1} and \eqref{eq:final-scenario2}, we see (\ref{sweet_home}). This completes the whole proof.
\end{proof}

\subsubsection{Proof of Theorem~\ref{thm:wishart}}\label{Sec_thm:wishart}

To prove Theorem~\ref{thm:wishart}, we need the following two propositions.
\begin{proposition}\label{upper_bound_11} Suppose Assumption 1 in \eqref{nice_birth_1} holds. Recall $T_{m,n,p}$ defined as in \eqref{eq:t-stat}. Then,
\beaa
\lim_{n\to\infty}\sup_{t\geq \delta} e^{\alpha t} t^2\PP\left(\frac{1}{\sqrt{n}}(T_{m,n,p}-n)\geq 2\sqrt{m\log p} +t
  \right)= 0
\eeaa
for any $\alpha>0$ and $\delta>0$.
\end{proposition}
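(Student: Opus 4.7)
\textbf{Proof plan for Proposition \ref{upper_bound_11}.} The plan is to mirror the two-step argument used for Proposition \ref{east_wind}: a union bound over the $\binom{p}{m}\le p^m$ principal minors reduces the problem to a single-block tail estimate, and a moderate-deviation bound then controls that single block. Since all $m\times m$ principal minors of $W$ are identically distributed,
\begin{equation*}
\PP\!\left(\frac{T_{m,n,p}-n}{\sqrt{n}}\ge 2\sqrt{m\log p}+t\right)\le p^m\,\PP\!\left(\frac{\lambda_1(W_{\{1,\dots,m\}})-n}{\sqrt{n}}\ge 2\sqrt{m\log p}+t\right),
\end{equation*}
so the task is to establish a Wishart analog of Lemma \ref{eq:wig-margin-tail} of the form $\PP\bigl((\lambda_1(W_{\{1,\dots,m\}})-n)/\sqrt{n}\ge x\bigr)\le \exp\{-x^2/4+\kappa m\log x+o(1)\}$ for $x>4\sqrt{m}$ in the range allowed by Assumption 1.

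For the single-block estimate I would use the variational representation $\lambda_1(W_{\{1,\dots,m\}})=\sup_{v\in S^{m-1}}\|X_{\{1,\dots,m\}}v\|_2^2$ together with an $\varepsilon$-net $N_\varepsilon\subset S^{m-1}$ of cardinality at most $(3/\varepsilon)^m$, reducing via the standard inequality $\lambda_1(W_{\{1,\dots,m\}})\le (1-2\varepsilon)^{-1}\max_{v\in N_\varepsilon}v^{\intercal}W_{\{1,\dots,m\}}v$ to a union bound over $N_\varepsilon$ of tails for a fixed unit $v$. For such $v$, $\|X_{\{1,\dots,m\}}v\|_2^2$ is exactly $\chi_n^2$-distributed, so an exponential Chebyshev inequality (equivalently, a change of measure tilting the $\chi_n^2$ law by $e^{\theta y}$ for an optimally chosen $\theta$) yields
\begin{equation*}
\PP\!\left(\frac{\chi_n^2-n}{\sqrt{n}}\ge x\right)\le \exp\!\bigl(-nI(1+x/\sqrt{n})\bigr),\qquad I(u)=\tfrac{1}{2}(u-1-\log u).
\end{equation*}
A Taylor expansion gives $nI(1+x/\sqrt{n})=x^2/4+O(x^3/\sqrt{n})$, so the chi-squared tail matches the Gaussian rate $e^{-x^2/4}$ to leading order in the moderate-deviation regime. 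Choosing $\varepsilon$ polynomially small so that $(3/\varepsilon)^m=\exp(O(m\log x))$ and absorbing this factor into a $\kappa m\log x$ correction in the exponent produces the claimed single-block bound.

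Substituting this bound into the union bound and taking logarithms reproduces the computation in \eqref{eq:wig-tail-p} almost verbatim: with $x=2\sqrt{m\log p}+t$,
\begin{equation*}
\log\!\Big[e^{\alpha t}\,p^m\,\PP\bigl(\lambda_1(W_{\{1,\dots,m\}})\ge n+\sqrt{n}\,x\bigr)\Big]\le -\tfrac{t}{2}\sqrt{m\log p}+O(m\log\log p)+O(x^3/\sqrt{n})
\end{equation*}
uniformly in $t\ge\delta$, and under Assumption 1 both error terms are $o(1)$, yielding the desired $\sup_{t\ge\delta}e^{\alpha t}t^2\PP(\cdots)\to 0$.

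The main obstacle is making the chi-squared estimate sharp uniformly over the whole range $t\ge\delta$. For $t$ bounded the cubic correction is $x^3/\sqrt{n}=O((m\log p)^{3/2}/\sqrt{n})$, which vanishes precisely because of the stringent requirement $m=o(n^{1/4}/((\log n)^{3/2}(\log p)^{1/2}))$ in \eqref{nice_birth_1}; this is where the extra power of $n$ beyond Assumption 2 is consumed, and where the approximation of the Wishart by its Wigner limit is quantitatively controlled. For $t$ growing the quadratic expansion of $I$ breaks down, but the non-asymptotic rate satisfies $nI(1+x/\sqrt{n})\gtrsim x\sqrt{n}$ for $x$ of order $\sqrt{n}$ or larger, producing a tail that comfortably beats $p^m$. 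Splitting the range into moderate and large deviation zones (analogous to the split at $m\sinf$ used in the proof of Proposition \ref{west_wind}) and treating each with the corresponding Chernoff bound completes the argument.
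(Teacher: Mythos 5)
Your overall architecture (union bound over $p^m$ minors, then a single-block tail estimate via an $\varepsilon$-net plus a Chernoff bound on $\chi^2_n$) is exactly how the paper handles most of the range of $t$, but there is a genuine gap in your single-block bound: the net cardinality cannot be made $\exp\{O(m\log x)\}$. Because the quantity being controlled is the \emph{normalized} fluctuation $(\lambda_1(W_{\{1,\dots,m\}})-n)/\sqrt{n}$, while $v^{\intercal}W_{\{1,\dots,m\}}v$ itself is of order $n$, resolving $\lambda_1$ to additive precision $\sqrt{n}\,t$ forces the net radius to be of order $t/(m\sqrt{n})$ (this is visible in the paper's Lemma~\ref{lemma:moderate-bound}, where the discretization error is $2dmr$ in units of $W/n$ and one must take $d\asymp t/(m\sqrt{n}\,r)$). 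The resulting penalty in the exponent is $\kappa m\log(1/d)=O(m\log n)$, not $O(m\log x)$. Assumption 1 places no lower bound on $p$ in terms of $n$, so $n$ may be vastly larger than any power of $p$ (e.g.\ $m$ fixed and $p=n$), in which case $m\log n$ overwhelms the Gaussian gain $-t\sqrt{m\log p}$ for $t$ near $\delta$, and your bound on $e^{\alpha t}t^2 p^m\PP(\cdot)$ diverges rather than tending to $0$.

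This is precisely why the paper splits into three cases rather than your two: Cases 1 and 2 ($t\geq \delta\vee\omega_n$ with $\omega_n=(m/\log p)^{1/2}\sinf\log n$) are handled by the net argument as you propose, but Case 3 ($\delta\leq t<\delta\vee\omega_n$, nonempty only when $n\geq \exp\{((\log p)/m)^{1/2}\sinf^{-1}\delta\}$) requires a different idea: a change-of-measure comparison of the joint eigenvalue density of $(W_{\{1,\dots,m\}}-nI_m)/\sqrt{n}$ with that of the Wigner matrix (Lemma~\ref{lemma:wig-eigen-approx}), which transfers the tail bound to the already-proved Wigner estimate \eqref{eq:wig-tail-p} at the cost of a multiplicative factor $\exp\{O(mn^{-1/2}r_{m,n}^3)\}=\exp\{o(\sqrt{m\log p})\}$, plus a separate control of $\PP(\max_i|\nu_i|>r_{m,n})$ using the Case 2 bound. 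Your closing paragraph discusses the cubic Taylor error $O(x^3/\sqrt{n})$ and the large-$t$ regime, both of which are correctly diagnosed, but it does not address the $m\log n$ net cost, which is the actual obstruction; without the density-comparison step (or some substitute exploiting the exact Wishart eigenvalue law) the proof is incomplete.
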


\begin{proposition}\label{lower_bound_11} Suppose Assumption 1 in \eqref{nice_birth_1} holds. Recall $T_{m,n,p}$ defined as in \eqref{eq:t-stat}. Then,
\beaa
\lim_{n\to\infty}\sup_{t\geq \delta} e^{\alpha t} t^2\PP\left(\frac{1}{\sqrt{n}}(T_{m,n,p}-n)\leq 2\sqrt{m\log p} -t
  \right)= 0
\eeaa
for any $\alpha>0$ and $\delta>0$.
\end{proposition}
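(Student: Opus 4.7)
The plan is to mirror the two-regime split used in Proposition \ref{west_wind}, replacing exact Gaussian tail computations by moderate deviation estimates for the dependent, non-Gaussian Wishart entries. Setting $\sinf = \log\log\log p$, I would handle the bulk regime $\delta \leq t \leq 2\sqrt{m\log p} - m\sinf$ and the far tail regime $t \geq 2\sqrt{m\log p} - m\sinf$ separately, and then combine them as at the end of the proof of Proposition \ref{west_wind}.

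In the bulk regime, I define the Wishart analogue of \eqref{eq:tas}: with $\tau_{m,p,t} = (1-\varepsilon_{m,p,t})\sqrt{4\log p/m}$ and $\varepsilon_{m,p,t} = (4m\log p)^{-1/2}t$, set
\beaa
A_S := \Big\{(W_{ii}-n)/\sqrt{n}\geq \tau_{m,p,t}\ \text{for all }i\in S,\ \ W_{ij}/\sqrt{n}\geq \tau_{m,p,t}\ \text{for all }i<j\ \text{in }S\Big\}.
\eeaa
The Rayleigh-quotient bound $\lambda_1(M)\geq m^{-1}\sum_{i,j}M_{ij}$ applied to $M=W_S$ shows that on $A_S$ we have $\lambda_1(W_S)-n\geq m\sqrt{n}\tau_{m,p,t}=\sqrt{n}(2\sqrt{m\log p}-t)$. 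Setting $Q_{m,p}=\sum_{|S|=m}\mathbf{1}_{A_S}$, the Chebyshev-type step \eqref{sun_set_flower} yields
\beaa
\PP\Big(\tfrac{T_{m,n,p}-n}{\sqrt{n}}\leq 2\sqrt{m\log p}-t\Big)\leq \PP(Q_{m,p}=0)\leq \frac{\mathrm{Var}(Q_{m,p})}{(\E Q_{m,p})^2}.
\eeaa
The combinatorial accounting of $\E Q_{m,p}$ and $\mathrm{Var}(Q_{m,p})$ by overlap size $l=|S_1\cap S_2|$, together with Lemmas \ref{lemma:max-l} and \ref{lemma:combine}, proceeds exactly as in \eqref{eq:exp-Q}--\eqref{eq:ratio-intermed} once one has the Wishart-side moderate deviation estimates
\beaa
&&\log \PP(A_{S_0}) = -(1-\varepsilon_{m,p,t})^2 m\log p + O(m^2 \log\log p),\\
&&\log \PP(A_{S_1}\cap A_{S_2}) \leq -m^{-1}(2m^2-l^2)(1-\varepsilon_{m,p,t})^2\log p + O(m^2 \log\log p),
\eeaa
the analogues of \eqref{eq:gaussian-approx} and \eqref{eq:intersection}.

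The principal obstacle is supplying these two estimates, since $W_{ij}=\sum_{k=1}^n x_{ki}x_{kj}$ is a bilinear form in i.i.d.\ Gaussians whose joint law across $(i,j)$ is both non-Gaussian and correlated through the shared columns of $X$. As hinted in Section~\ref{sec:proof-strat}, my plan is an exponential change of measure on the rows of $X$: construct a tilted law $\nu_S$ (a Gaussian with a small mean shift and covariance perturbation) under which the entries of $W_S$ have expected values matching the thresholds $\sqrt{n}\tau_{m,p,t}$, so that under $\nu_S$ a CLT/Berry--Esseen argument forces $A_S$ to have probability bounded away from zero. The explicit Radon--Nikodym derivative then contributes the leading exponent $-(1-\varepsilon_{m,p,t})^2 m\log p$ with a controlled remainder. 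The stronger scaling $m = o(n^{1/4}/((\log n)^{3/2}(\log p)^{1/2}))$ in Assumption 1 --- precisely the part absent from Assumption 2 --- is what is needed to dominate the Berry--Esseen error and the higher-order cumulant corrections uniformly in $S$, and is presumably where \eqref{through_water} enters. Applying the identical tilting to the enlarged index set $S_1\cup S_2$ yields the pairwise bound with the correct dependence on $l$.

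For the far tail regime $t\geq 2\sqrt{m\log p}-m\sinf$, I would exploit the diagonal lower bound $T_{m,n,p}\geq \max_{1\leq i\leq p}W_{ii}$ with $W_{ii}\sim \chi^2_n$. A standard Cram\'er-type moderate deviation expansion gives $\PP((W_{ii}-n)/\sqrt{n}\leq y)\leq \Phi(y/\sqrt{2})(1+o(1))$ in the relevant moderate range, and a Bernstein-type bound controls the chi-squared left tail for more negative $y$. These two together let the computation in \eqref{eq:very-small-t-first-case}--\eqref{eq:final-scenario2} transfer directly, with the threshold rescaled by $\sqrt{2}$, yielding the same super-polynomial decay. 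Combining the bulk and far-tail regimes then completes the proof.
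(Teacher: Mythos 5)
Your proposal follows the paper's scaffold step for step: the two-regime split at $t = 2\sqrt{m\log p}-m\sinf$, the events $A_S$ with the same thresholds $\tau_{m,p,t}$, the second-moment method via $Q_{m,n,p}$ and \eqref{sun_set_flower}, and in the far tail the diagonal lower bound $T_{m,n,p}\geq \max_i W_{ii}$ with $W_{ii}\sim\chi^2_n$ handled by Berry--Esseen plus a chi-squared concentration bound (Laurent--Massart in the paper, your ``Bernstein-type'' is the same thing). Where you genuinely diverge is the bulk-regime moderate deviation estimates for $\PP(A_{S_0})$ and $\PP(A_{S_1}\cap A_{S_2})$. The paper does \emph{not} tilt the rows of $X$: it truncates on the bounded spectral-norm event $\mathcal{L}_{m,n,p}$ and then applies Lemma~\ref{lemma:log-likelihood-ratio}, an exact log-density ratio between $\frac{1}{\sqrt{n}}(W_{\{1,\dots,m\}}-nI_m)$ and the Wigner matrix $\tW_{\{1,\dots,m\}}$ (closed-form thanks to the explicit Wishart density). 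This converts the Wishart probability into the corresponding Wigner probability up to a multiplicative factor $e^{O(ms_{m,n,p}^3/\sqrt{n})}$, so the bulk-regime estimates \eqref{eq:gaussian-approx} and \eqref{eq:intersection} from the Wigner proof are \emph{reused} rather than rederived. Your approach---a Cram\'er-type tilt of the data rows followed by a multivariate CLT---attacks the same quantity from below and would, in principle, reach the same exponent, but it requires you to re-establish the moderate deviation estimates from scratch rather than piggybacking on Proposition~\ref{west_wind}.

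Within your route there is also a real error in the tilted-probability step. You claim a tilt under which ``a CLT/Berry--Esseen argument forces $A_S$ to have probability bounded away from zero.'' If you tilt so that the tilted means sit exactly at the thresholds $\sqrt{n}\tau_{m,p,t}$, the event $A_S$ is a $\tfrac{m(m+1)}{2}$-dimensional orthant through the tilted mean, so its tilted probability is of order $2^{-m(m+1)/2}$ (worse or better depending on the residual correlations of the $W_{ij}$'s under the tilt), not $\Theta(1)$. This $\exp(-\Theta(m^2))$ loss is actually absorbed by the $O(m^2\log\log p)$ error budget in \eqref{eq:gaussian-approx}, so the approach is not doomed, but the claim as written is false and must be replaced by an explicit quantitative lower bound proved uniformly in $(m,n,p)$. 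You also need to control the variation of the Radon--Nikodym derivative over the unbounded region $A_S$ (restrict to a bounded neighborhood of the boundary and show the tail is negligible) to extract the leading term $-(1-\varepsilon_{m,p,t})^2 m\log p$; in the paper that role is played entirely by the truncation on $\mathcal{L}_{m,n,p}$ combined with Lemma~\ref{lemma:log-likelihood-ratio}, and you would have to supply an analogue. The same caveats recur, amplified, for the pairwise estimate on $S_1\cup S_2$.
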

\begin{proof}[Proof of Theorem~\ref{thm:wishart}] Similar to the proof of Theorem~\ref{thm:wigner}, it is sufficient to prove \eqref{eq:mgf-thm-wish}.
By the same argument as in
the proof of Theorem~\ref{thm:wigner}, with the upper bound for $\PP(\frac{T_{m,n,p}-n}{\sqrt{n}}\geq 2\sqrt{m\log p}+t)$ given in Proposition \ref{upper_bound_11} and the upper bound for $\PP(\frac{T_{m,n,p}-n}{\sqrt{n}}\leq 2\sqrt{m\log p}-t)$ for $t>\delta$ given in Proposition \ref{lower_bound_11}, we get \eqref{eq:mgf-thm-wish}.
\end{proof}

In the following we start to prove  Propositions \ref{upper_bound_11} and  \ref{lower_bound_11}.
\begin{proof}[Proof of Proposition~\ref{upper_bound_11}]
Without loss of generality, we assume $\delta<1$ since the expectation  in \eqref{eq:mgf-thm-wish} is monotonically decreasing in $\delta$.

Let $W_{\{1,...,m\}}$ be as $W_S$ above \eqref{eq:t-stat} with $S=\{1,2,\cdots, m\}$. Analogous to \eqref{eq:upper-split}, we have
\begin{equation}\label{eq:split-wish}
\begin{split}
  &\PP\left(\frac{1}{\sqrt{n}}(T_{m,n,p}-n)\geq2\sqrt{m\log p}+t\right)\\
  \leq &p^m \PP\left(
  \frac{1}{\sqrt{n}}(\lambda_1(W_{\{1,...,m\}})-n)\geq 2\sqrt{m\log p} + t
  \right).
\end{split}
\end{equation}
We now bound the last probability.
  Since the above tail probability involve moderate bound and large deviation bound for different ranges of $t$, we will discuss three different cases and   use different proof strategies. Recall $\xi_p=\log\log\log p$. Set
\bea\lbl{green_fresh}
\omega_n=\Big(\frac{m}{\log p}\Big)^{1/2}\sinf\log n.
\eea
The three cases are: (1) $t>\frac{\delta\sqrt{n}}{100}$, (2) $\delta\vee \omega_n \leq t\leq \frac{\delta\sqrt{n}}{100}$, and (3) $\delta\leq t < \delta\vee \omega_n$.
They cover all situations for  $t\geq \delta$.
For the first two cases, the upper bound is based on the next lemma, which gives a moderate deviation bound for the spectrum of $\frac{1}{\sqrt{n}}W_{\{1,...,m\}}$ from the identity matrix $I_m$.
\begin{lemma}\label{lemma:moderate-bound}
There exists a constant $\kappa>0$ such that for all $n,p,m$,  $r\geq 1$, $0<d<1/2$ and $y>2dmr$, we have
\begin{equation}\label{mid_range}
\begin{split}
	& \PP\left(
  \frac{\lambda_1(W_{\{1,...,m\}})-n}{n}\geq y
  \right)\\
 \leq & 2\cdot\exp\Big\{-nI(1+y-2dmr)+\kappa m\log\frac{1}{d}\Big\}
  +2\cdot e^{-mnI(r)}
\end{split}
 \end{equation}
and
\bea\label{mid_range_2}
\begin{split}
	 &\PP\left(
  \frac{\lambda_m(W_{\{1,...,m\}})-n}{n}\leq- y
  \right)\\
 \leq & 2\cdot\exp\Big\{-nI(1-y+2dmr)+\kappa m\log\frac{1}{d}\Big\}
  + 2\cdot e^{-mnI(r)}
\end{split}
 \eea
where $I(s)=\frac{1}{2}(s-1-\log s)$ for $s> 0$ and  $I(s)=\infty$ for $s\leq 0$.
\end{lemma}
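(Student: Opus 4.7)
The plan is to combine an $\epsilon$-net argument on the unit sphere in $\mathbb{R}^m$ with a cheap truncation of the operator norm and with the Chernoff tail bound for $\chi^2$-variables. By the variational characterizations $\lambda_1(W_{\{1,\ldots,m\}}) = \sup_{u\in S^{m-1}} u^\intercal W_{\{1,\ldots,m\}} u$ and $\lambda_m(W_{\{1,\ldots,m\}}) = \inf_{u\in S^{m-1}} u^\intercal W_{\{1,\ldots,m\}} u$, the problem reduces to controlling the quadratic form $\|X_{\{1,\ldots,m\}}u\|^2$ uniformly in $u\in S^{m-1}$, and for each fixed unit $u$ this quadratic form is exactly $\chi^2_n$-distributed.

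First, I would construct a $d$-net $\mathcal{N}_d\subset S^{m-1}$ of cardinality at most $(3/d)^m$ (standard volumetric covering). Writing $W$ for $W_{\{1,\ldots,m\}}$, for every unit $u$ there is $v\in\mathcal{N}_d$ with $\|u-v\|\le d$, and the PSD comparison $|u^\intercal W u - v^\intercal W v|\le 2d\lambda_1(W)$ propagates after taking suprema and infima to
\beaa
\lambda_1(W)\le \max_{v\in\mathcal{N}_d}v^\intercal W v + 2d\lambda_1(W),\qquad
\lambda_m(W)\ge \min_{v\in\mathcal{N}_d}v^\intercal W v - 2d\lambda_1(W).
\eeaa
These are multiplicative in $\lambda_1(W)$, and the task is to turn them into the additive shift $-2dmr$ in the conclusion. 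To do so, introduce the truncation event $\mathcal{E}=\{\lambda_1(W)\le mnr\}$. Since $\lambda_1(W)\le \tr(W)=\sum_{i=1}^m W_{ii}$ and $\tr(W)$ is exactly $\chi^2_{mn}$-distributed (it equals $\sum_{i,j}x_{ij}^2$ over the $mn$ entries of $X_{\{1,\ldots,m\}}$), the standard Chernoff bound gives $\PP(\mathcal{E}^c)\le e^{-mnI(r)}$, which is (up to the harmless factor $2$) the second term of the statement. On $\mathcal{E}$ the correction $2d\lambda_1(W)$ is at most $2dmnr$, so
\beaa
\{\lambda_1(W)/n\ge 1+y\}\cap\mathcal{E}\subseteq\{\max_{v\in\mathcal{N}_d}v^\intercal W v/n\ge 1+y-2dmr\},
\eeaa
and analogously $\{\lambda_m(W)/n\le 1-y\}\cap\mathcal{E}\subseteq\{\min_{v\in\mathcal{N}_d}v^\intercal W v/n\le 1-y+2dmr\}$.

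The remaining step is a union bound over $\mathcal{N}_d$: each $v\in\mathcal{N}_d$ contributes the one-dimensional Chernoff bound $\PP(\chi^2_n/n\ge s)\le e^{-nI(s)}$ for $s\ge 1$ or its lower-tail analogue for $0<s\le 1$, with the hypothesis $y>2dmr$ ensuring that $1+y-2dmr>1$ and $1-y+2dmr<1$ lie on the correct sides of $1$. The union bound introduces a factor $(3/d)^m=\exp\{m\log(3/d)\}$, and since $d<1/2$ one may absorb $\log 3$ into a constant to produce the $\kappa m\log(1/d)$ term in the exponent. The main obstacle is conceptual rather than technical: the net alone yields only a multiplicative inequality, and the idea that makes the argument work is pairing it with a very crude but inexpensive trace bound to truncate $\|W\|_{op}$, which is what converts the multiplicative correction into the additive moderate-deviation shift that is later needed in the proof of Proposition~\ref{upper_bound_11}. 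The factors of $2$ in front of the two terms and the absolute constant $\kappa$ are routine bookkeeping.
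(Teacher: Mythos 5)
Your proposal is correct and follows essentially the same route as the paper's proof: a $d$-net on $S^{m-1}$ with $\log N_d = O(m\log(1/d))$, the comparison $|u^\intercal W u - v^\intercal W v|\le 2d\lambda_1(W)$, truncation of $\lambda_1(W)$ via the trace bound $\lambda_1(W)\le\tr(W)\sim\chi^2_{mn}$ to convert the multiplicative correction into the additive shift $2dmr$, and a union bound with the pointwise $\chi^2_n$ Chernoff/Cram\'er bound $e^{-nI(s)}$. The only cosmetic difference is the covering-number estimate (volumetric $(3/d)^m$ versus the paper's citation of Rogers's $O(m^{1.5}(\log m)d^{-m})$), which is immaterial since both are absorbed into $\kappa m\log(1/d)$.
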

{\bf Case 1: $t>\frac{\delta\sqrt{n}}{100}$.} Let $\alpha>0$ be given.
Choose $r=\max(2, 1+\frac{80\alpha t}{mn})$,  $d=\min(\frac{1}{2}, \frac{t}{4m\sqrt{n}r})$, and $y=\frac{2\sqrt{m\log p}+t}{\sqrt{n}}$ in Lemma~\ref{lemma:moderate-bound}. The choice of $r,d,$ and $y$ satisfies that  $2dmr\leq \frac{t}{2\sqrt{n}}$ and hence $y-2dmr\geq \frac{2\sqrt{m\log p}}{\sqrt{n}}+\frac{t}{2\sqrt{n}}$. Set $z=\frac{2\sqrt{m\log p}}{\sqrt{n}}+\frac{t}{2\sqrt{n}}$.  Notice that $I(s)$ from Lemma \ref{lemma:moderate-bound} is increasing for $s\geq 1$. Then, by the lemma,
\begin{equation}\label{eq:case1-main}
\begin{split}
&t^2e^{\alpha t}p^m\PP\left(
  \frac{\lambda_1(W_{\{1,...,m\}})-n}{\sqrt{n}}
  \geq 2\sqrt{m\log p}+t
  \right)\\
  = &t^2e^{\alpha t}p^m\PP\left(
  \frac{\lambda_1(W_{\{1,...,m\}})-n}{\sqrt{n}}
  \geq \sqrt{n}y
  \right)\\
    \leq &
  2\cdot \exp\left\{
  -\frac{n}{2}[z-\log(1+z)]+\kappa m\log\frac{1}{d} +\alpha t+2\log t+ m \log p
  \right\}\\
  &+2\cdot \exp\left\{
  -\frac{1}{2}(r-1-\log r) mn+\alpha t+2\log t+m\log p
  \right\}.
\end{split}
\end{equation}
The following lemma says that both of the last two terms go to zero.
\begin{lemma}\label{lemma:case1}
Suppose Assumption 1 in \eqref{nice_birth_1} holds.	Let $\alpha>0$ and $\delta>0$ be given. For $r=\max(2, 1+\frac{80\alpha t}{mn})$, $d=\min(\frac{1}{2}, \frac{t}{4m\sqrt{n}r})$  and $z=\frac{2\sqrt{m\log p}}{\sqrt{n}}+\frac{t}{2\sqrt{n}}$,
 we have
	\begin{equation}\label{eq:case1-first}
		\lim_{n\to\infty}\sup_{t>\frac{\delta\sqrt{n}}{100}}\exp\left\{
  -\frac{n}{2}[z-\log(1+z)]+\kappa m\log\frac{1}{d} +\alpha t+2\log t+ m \log p
  \right\}=0
	\end{equation}
	and
	\begin{equation}\label{eq:case1-second}
		\lim_{n\to\infty}\sup_{t>\frac{\delta\sqrt{n}}{100}} \exp\left\{
  -\frac{1}{2}(r-1-\log r) mn+\alpha t+2\log t+m\log p
  \right\}=0.
	\end{equation}

\end{lemma}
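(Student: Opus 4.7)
The plan is to verify the two limits \eqref{eq:case1-first} and \eqref{eq:case1-second} separately. In each case I will show that the negative contribution inside the exponential dominates the positive contributions $\alpha t + 2\log t + m\log p$ (and $\kappa m\log(1/d)$) uniformly over $t > \delta\sqrt{n}/100$ as $n\to\infty$. The key in both cases is that the bound obtained in Lemma~\ref{lemma:moderate-bound} was already tuned so that a comfortable margin remains; my job is to quantify the margin under the tight regime of Assumption 1.

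For \eqref{eq:case1-second}, I would split on which branch of the maximum defining $r$ is active. When $r = 2$ (equivalently $t \le mn/(80\alpha)$), the quantity $\tfrac{1}{2}(r-1-\log r)\,mn = \tfrac{1-\log 2}{2}\,mn$ is of order $mn$; by Assumption~1 this dominates $\alpha t \leq mn/80$, as well as $m\log p$ and $2\log t$ (since $\log p = o(n^{1/2})$ and $m\log m = o(n)$), so the exponent tends to $-\infty$. When $r = 1 + 80\alpha t/(mn) > 2$, I would invoke the elementary bound $r - 1 - \log r \geq c_0\, r$ valid for all $r \geq 2$ (with, say, $c_0 = (1-\log 2)/4$), which yields
\[
\tfrac{1}{2}(r-1-\log r)\,mn \;\geq\; \tfrac{c_0}{2}\,r\,mn \;=\; \tfrac{c_0}{2}\,mn + 40\,c_0\,\alpha\,t.
\]
With $40 c_0 > 1$, after subtracting $\alpha t$ there remains a surplus of order $mn + \alpha t$, which absorbs $m\log p + 2\log t$ uniformly in $t$.

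For \eqref{eq:case1-first}, the key observation is that $t > \delta\sqrt{n}/100$ forces $z \geq \delta/200$, a strictly positive constant. Consequently $z - \log(1+z) \geq \eta_0\, z$ for some $\eta_0 = \eta_0(\delta) > 0$, which gives
\[
\tfrac{n}{2}[z - \log(1+z)] \;\geq\; \eta_0\,\sqrt{mn\log p} \;+\; \tfrac{\eta_0\sqrt{n}\,t}{4}.
\]
I would then absorb the positive terms one at a time: for $n$ large, $\eta_0\sqrt{n}/4 > \alpha + 1$, so $\tfrac{\eta_0\sqrt{n}\,t}{4}$ absorbs $\alpha t + 2\log t$ with residual $\gtrsim \sqrt{n}\,t \gtrsim n$ (using $t \geq \delta\sqrt{n}/100$); and $\eta_0\sqrt{mn\log p}$ absorbs $m\log p$ under Assumption~1, since $\sqrt{n/(m\log p)} \to \infty$.

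The main obstacle is controlling the $\kappa m\log(1/d)$ term in \eqref{eq:case1-first} uniformly, because $d^{-1}$ depends on $r$ and $r$ can grow unboundedly with $t$. Using $\log(1/d) \leq \log 2 \vee \log(4m\sqrt{n}\,r/t)$ together with $t > \delta\sqrt{n}/100$, one gets $\log(1/d) = O\!\bigl(\log m + \log(1 + 80\alpha t/(mn))\bigr)$. Under Assumption~1, $m = o(n^{1/4}(\log n)^{-3/2}(\log p)^{-1/2})$, so $m\log m = o(n)$, and a short case analysis on whether $t \leq e^{\sqrt{n}}$ or not shows $m\log(1/d) = o(n) + o(\sqrt{n}\,t)$ uniformly over $t > \delta\sqrt{n}/100$. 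This is strictly smaller than the surplus established above, so combining all steps yields the claimed uniform limits.
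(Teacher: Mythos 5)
Your proposal is correct and follows essentially the same route as the paper's proof: for \eqref{eq:case1-second} you split on which branch of $r=\max(2,1+80\alpha t/(mn))$ is active and apply an elementary inequality of the form $r-1-\log r\geq c_0 r$ for $r\geq 2$ (the paper uses $c_0=1/12$, you use $c_0=(1-\log 2)/4$; both are easily checked), and for \eqref{eq:case1-first} you observe that $t>\delta\sqrt n/100$ forces $z$ away from $0$, so $z-\log(1+z)\geq \eta_0 z$, then absorb $\alpha t+2\log t$ by $\eta_0\sqrt n\,t/4$ and $m\log p$ by $\eta_0\sqrt{nm\log p}$ using $m\log p=o(\sqrt n)$. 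The one place where your bookkeeping differs is the handling of $\kappa m\log(1/d)$ in \eqref{eq:case1-first}: you introduce an auxiliary case split at $t=e^{\sqrt n}$, whereas the paper avoids any split by directly using $\log(1+x)\leq x$ to get $m\log r\leq 80\alpha t/n=O(t/n)$, giving $m\log(1/d)=O(m\log n+t/n)$, both of which are trivially $o(\sqrt n\,t)$ on the range $t>\delta\sqrt n/100$. Your version still works, but the paper's one-line bound is cleaner; either way the conclusion holds.
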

Combining \eqref{eq:split-wish}, \eqref{eq:case1-main}-\eqref{eq:case1-second}, we conclude
\begin{equation}\label{eq:case1-final}
  \lim_{n\to\infty}\sup_{t>\frac{\delta\sqrt{n}}{100}}
   t^2e^{\alpha t}\PP\left(\frac{1}{\sqrt{n}}(T_{m,n,p}-n)
  \geq 2\sqrt{m\log p}+t
  \right)=0.
\end{equation}
%

\noindent{\bf Case 2: $\delta\vee \omega_n \leq t\leq \frac{\delta\sqrt{n}}{100}$.} Review $\omega_n$ in \eqref{green_fresh}.
Now we choose $r=2$, $d=\frac{t}{8m\sqrt{n}}<\frac{1}{2}$ and $y=\frac{2\sqrt{m\log p}+t}{\sqrt{n}}$. Then $y>\frac{t}{2\sqrt{n}}=2dmr$. By \eqref{mid_range},  
%
\begin{equation}\label{eq:case2-main}
  \begin{split}
    &t^2e^{\alpha t}p^m\PP\left(
  \frac{\lambda_1(W_{\{1,...,m\}})-n}{\sqrt{n}}
  \geq 2\sqrt{m\log p}+t
  \right)\\
  \leq &
  2\cdot\exp\left\{
  -\frac{n}{2}[z-\log(1+z)]+\kappa m\log\frac{1}{d} +\alpha t+2\log t+ m \log p
  \right\}\\
  &+2\cdot\exp\left\{
  -\frac{1}{2}(1-\log 2) mn+\alpha t+2\log t+m\log p
  \right\}
  \end{split}
\end{equation}
where  $z:=y-2dmr = \frac{2\sqrt{m\log p}}{\sqrt{n}}+\frac{t}{2\sqrt{n}}$.
The last two terms are analyzed in the next lemma.
\begin{lemma}\label{lemma:case2} Suppose Assumption 1 in \eqref{nice_birth_1} holds. Let $\omega_n$ be as in \eqref{green_fresh}.
For $\delta\vee \omega_n \leq t\leq \frac{\delta\sqrt{n}}{100}$, $z=\frac{2\sqrt{m\log p}}{\sqrt{n}}+\frac{t}{2\sqrt{n}}$ and $d=\frac{t}{8m\sqrt{n}}$, we have
	 \begin{equation}\label{eq:case2-first}
 \begin{split}
 	&\exp\left\{
  -\frac{n}{2}[z-\log(1+z)]+\kappa m\log\frac{1}{d} +\alpha t+2\log t+ m \log p
  \right\}\\
    \leq &\exp\left\{
  -\frac{1}{4}t\sqrt{m\log p}
  \right\}
 \end{split}
 \end{equation}
 as $n$ is sufficiently large. In addition,
 \bea\label{eq:case2-second}
 & &  -\frac{1}{2}(1-\log 2) mn+\alpha t+2\log t+m\log p\nonumber\\
  &=& -\frac{1-\log 2}{2}[1+o(1)] mn
\eea
as $n\to\infty$.
\end{lemma}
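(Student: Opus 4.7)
The plan is to treat the two parts separately, since they require different techniques. For part (i), I would perform a second-order Taylor expansion of the rate function $f(z) := z - \log(1+z)$ about the origin. In Case 2 the argument $z = 2\sqrt{m\log p}/\sqrt{n} + t/(2\sqrt{n})$ is uniformly small: the first summand is $o(1)$ because $m\log p = o(n)$ under Assumption 1, and the second is at most $\delta/200$ since $t \leq \delta\sqrt{n}/100$ with $\delta<1$ (WLOG). On this range Taylor's theorem gives $f(z) \geq z^2/2 - z^3/3$, and a direct computation yields the identity $\frac{nz^2}{4} = m\log p + \frac{t\sqrt{m\log p}}{2} + \frac{t^2}{16}$, so the $m\log p$ in the exponent cancels exactly, leaving a slack of $\frac{t\sqrt{m\log p}}{2} + \frac{t^2}{16}$ against the target $-\frac{t\sqrt{m\log p}}{4}$.

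The remaining task for (i) is to show that the net slack $\frac{t\sqrt{m\log p}}{4} + \frac{t^2}{16}$ dominates the cubic Taylor remainder $nz^3/6$ plus the lower-order contributions $\kappa m\log(1/d) + \alpha t + 2\log t$. I would expand $nz^3/6$ via the binomial $(a+b)^3$ with $a = 2\sqrt{m\log p/n}$ and $b = t/(2\sqrt{n})$; each of the four resulting pieces is absorbed into the slack using the elementary bounds $t \geq \delta$ (together with $m\log p = o(\sqrt{n})$ from Assumption 1, which gives $m\log p/\sqrt{n} \leq t/4$ for large $n$), $m\log p = o(n)$, and $t/\sqrt{n} \leq \delta/100$. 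The penalty $\kappa m\log(1/d) = \kappa m\log(8m\sqrt{n}/t) = O(m\log n)$ is where the stronger lower bound $t \geq \omega_n$ matters: it yields $t\sqrt{m\log p} \geq m\xi_p\log n$, and since $\xi_p = \log\log\log p \to \infty$, the $O(m\log n)$ penalty is absorbed. Finally $\alpha t + 2\log t$ is trivially dominated by $\frac{t\sqrt{m\log p}}{8}$ because $\sqrt{m\log p} \to \infty$ and $t \geq \delta$.

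Part (ii) is a straightforward order-of-magnitude check. Assumption 1 forces $\log p = o(n^{1/2}(\log n)^{-3})$, so $m\log p = o(mn^{1/2}) = o(mn)$; the bound $t \leq \delta\sqrt{n}/100$ gives $\alpha t + 2\log t = O(\sqrt{n} + \log n) = o(mn)$; and $(1-\log 2)/2 > 0$ is a fixed positive constant, so the dominant term $-\frac{1-\log 2}{2}mn$ absorbs the three correction terms as $o(mn)$ and yields the claimed equivalence $-\frac{1-\log 2}{2}[1+o(1)]mn$. The main delicacy lies in (i), where one must simultaneously control a cubic Taylor remainder (handled by elementary bounds after binomial expansion) and the logarithmic penalty $\kappa m\log(1/d)$ (handled by the precise definition of $\omega_n$, which was engineered exactly to provide a $\xi_p$-factor of slack over the $O(m\log n)$ growth of $m\log(1/d)$).
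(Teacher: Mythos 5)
Your proposal is correct, and for part (i) it is in fact \emph{sharper} than the paper's own argument in a way that matters. The paper bounds the rate function by $z-\log(1+z)\geq \tfrac{1}{4}z^2$ for $0\leq z\leq 1$, which gives $\tfrac{n}{2}[z-\log(1+z)]\geq \tfrac{1}{2}m\log p+\tfrac{1}{4}t\sqrt{m\log p}$; after adding back the $m\log p$ from the union bound this leaves an uncancelled $+\tfrac{1}{2}m\log p$ in the exponent, so the displayed chain of inequalities in the paper does not literally close (the factor-of-two loss in $z^2/4$ versus $z^2/2$ is exactly the problem, since $t$ need not be comparable to $\sqrt{m\log p}$ on the range $\delta\vee\omega_n\leq t\leq \delta\sqrt n/100$). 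Your second-order expansion $z-\log(1+z)\geq \tfrac{z^2}{2}-\tfrac{z^3}{3}$, together with the exact identity $\tfrac{nz^2}{4}=m\log p+\tfrac{t\sqrt{m\log p}}{2}+\tfrac{t^2}{16}$, makes the $m\log p$ cancel exactly, and your term-by-term absorption of the four pieces of $nz^3/6$ (using $m\log p=o(\sqrt n)$, $t\geq\delta$, and $t/\sqrt n\leq\delta/100$) is precisely what is needed to control the remainder. The treatment of $\kappa m\log(1/d)=O(m\log n)$ via $t\geq\omega_n$, which yields $t\sqrt{m\log p}\geq m\xi_p\log n$ with $\xi_p\to\infty$, matches the paper's use of the definition of $\omega_n$, and your part (ii) is the same order-of-magnitude check the paper performs. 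In short: same strategy, but your version repairs a genuine arithmetic gap in the paper's proof of \eqref{eq:case2-first}.
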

Joining \eqref{eq:case2-main}-\eqref{eq:case2-second}, we obtain
\begin{equation}\label{eq:case2-final}
\begin{split}
\lim_{n\to\infty}\sup_{\delta\vee \omega_n \leq t\leq \frac{\delta\sqrt{n}}{100}}
   p^mt^2e^{\alpha t}\PP\left(
  \frac{\lambda_1(W_{\{1,...,m\}})-n}{\sqrt{n}}
  \geq 2\sqrt{m\log p}+t
  \right)=0,
\end{split}
\end{equation}
which together with \eqref{eq:split-wish} implies that
\begin{equation}\label{eq:case2-final_5}
\begin{split}
\lim_{n\to\infty}\sup_{\delta\vee \omega_n \leq t\leq \frac{\delta\sqrt{n}}{100}}
   t^2e^{\alpha t}\PP\left(\frac{1}{\sqrt{n}}(T_{m,n,p}-n)
  \geq 2\sqrt{m\log p}+t
  \right)=0.
%
\end{split}
\end{equation}
This completes our analysis for Case 2. By using the same argument as obtaining \eqref{eq:case2-final}, we have the following limit, which will be used later on.
 \bea\label{eq:case2-final_1}
& & \lim_{n\to\infty}\sup_{\delta\vee \omega_n \leq t\leq \frac{\delta\sqrt{n}}{100}}
   t^2e^{\alpha t}p^m\PP\left(
  \frac{\lambda_m(W_{\{1,...,m\}})-n}{\sqrt{n}}
  \leq -2\sqrt{m\log p}-t
  \right)=0.\nonumber\\
& &
\eea
We next study Case 3.

\noindent{\bf Case 3: $\delta\leq t < \delta\vee \omega_n$.}
Note that this case
is only possible
if $n\geq \exp\{
((\log p)/m)^{1/2}\sinf^{-1}\delta
\}$.
We point out that Lemma~\ref{lemma:moderate-bound} is not a suitable approach for bounding the tail probability in this case because the term $m\log (1/d)$, which cannot be easily controlled, will dominate the other terms in the error bound  for very large $n$.
Instead, we will use another approach to obtain an upper bound of $\PP\left(
  \lambda_1(W_{\{1,...,m\}})\geq 2\sqrt{m\log p}+t
  \right)$. The main step here is to quantify the approximation of the extreme eigenvalue of a Wishart matrix to that of a Wigner matrix. We will analyze their density functions and leverage them with the results in the proof of Theorem~\ref{thm:wigner}.

Let $\mu=(\mu_1,...,\mu_m)$ be the order statistics of the eigenvalues of $W_{\{1,...,m\}}$ such that $\mu_1>\mu_2>...>\mu_m$. Write $\nu=(\nu_1,...,\nu_m)$ with $\nu_i=(\mu_i-n)/\sqrt{n}$. Let $\tW_{\{1,...,m\}}=(\tilde{w}_{ij})_{1\leq i,j\leq m}$ where $\tilde{w}_{ij}$'s are   as in \eqref{eq:dist-wig}. Let the  eigenvalues of $\tW_{\{1,...,m\}}$ be $\lambda_1>... >\lambda_m$. Set $\lambda=(\lambda_1,...,\lambda_m)$.  Intuitively, the law of $\nu$ is close to that of $\lambda$ when $n$ is large. The next lemma quantifies the approximation speed. Review $\|x\|_{\infty}=\max_{1\leq i \leq m}|x_i|$ for any $x=(x_1, \cdots, x_m) \in \mathbb{R}^m$.
\begin{lemma}\label{lemma:wig-eigen-approx}
  Let $g_{n,m}(\cdot)$ be the density function of $\nu$, and let $\fwig{m}(\cdot)$ be the density function of $\lambda$.  Assume $m^3=o(n)$.  Then,
  \beaa
 & & 	    \log g_{n,m}(v)-\log \fwig{m}(v)\\
 &=& {o(1)} +O\left(m^2n^{-1/2}\|v\|_{\infty}+ m^2n^{-1} \|v\|_{\infty}^2+ mn^{-1/2} \|v\|_{\infty}^3\right)
  \eeaa
for all $v\in \mathbb{R}^m$ with $\|v\|_{\infty}\leq {\frac{2}{3}}\sqrt{n}$.
\end{lemma}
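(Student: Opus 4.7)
The plan is to exploit the explicit joint-density formulas for both the Wishart and the Gaussian Orthogonal Ensemble (GOE) eigenvalues, perform the change of variables $\nu_i = (\mu_i - n)/\sqrt{n}$, and Taylor-expand the Wishart density around the GOE form. First I would write the ordered-eigenvalue densities
$$ f_W(\mu) = C_{n,m}^W \prod_{i<j}(\mu_i-\mu_j)\prod_{i=1}^m \mu_i^{(n-m-1)/2}e^{-\mu_i/2} $$
on $\mu_1>\cdots>\mu_m>0$, and
$$ \fwig{m}(\lambda) = C_m^G \prod_{i<j}(\lambda_i-\lambda_j)\exp\!\Big(-\tfrac14\sum_{i=1}^m\lambda_i^2\Big), $$
with explicit normalizers $C_{n,m}^W = m!\,\pi^{m^2/2}/[2^{mn/2}\Gamma_m(n/2)\Gamma_m(m/2)]$ and the standard constant $C_m^G$ for the GOE. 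Under the substitution $\mu_i = n + \sqrt n\,\nu_i$, the Jacobian contributes $n^{m/2}$ and the Vandermonde factor pulls out $n^{m(m-1)/4}$, giving
$$ g_{n,m}(\nu) = K_{n,m}\prod_{i<j}(\nu_i-\nu_j)\prod_{i=1}^m(1+\nu_i/\sqrt n)^{(n-m-1)/2}\,e^{-\sqrt n\,\nu_i/2}, $$
where $K_{n,m}$ collects every $n$-dependent constant thrown off by the transformation.

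The analytic core is the Taylor expansion $\log(1+x) = x - x^2/2 + O(x^3)$ with $x = \nu_i/\sqrt n$. The hypothesis $\|v\|_\infty \le \tfrac23 \sqrt n$ keeps $|x| \le 2/3$, so the cubic remainder is uniformly controlled. Multiplying by $(n-m-1)/2$ and adding $-\tfrac{\sqrt n}{2}\nu_i$ yields
$$ \tfrac{n-m-1}{2}\log(1+\nu_i/\sqrt n) - \tfrac{\sqrt n}{2}\nu_i = -\tfrac14\nu_i^2 - \tfrac{m+1}{2\sqrt n}\nu_i + \tfrac{m+1}{4n}\nu_i^2 + O\!\big(|\nu_i|^3/\sqrt n\big). $$
Summing over $i$ and bounding $|\sum_i \nu_i|\le m\|v\|_\infty$ and $\sum_i \nu_i^2 \le m\|v\|_\infty^2$ produces precisely the three error scales $m^2\|v\|_\infty/\sqrt n$, $m^2\|v\|_\infty^2/n$, and $m\|v\|_\infty^3/\sqrt n$ stated in the lemma. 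The leading $-\tfrac14\sum\nu_i^2$ and the Vandermonde factor match $\fwig{m}$ exactly, so
$$ \log g_{n,m}(v) - \log \fwig{m}(v) = (\log K_{n,m} - \log C_m^G) + O\!\big(\text{the three error terms}\big). $$

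The step I expect to be the main obstacle is establishing that $\log K_{n,m} - \log C_m^G = o(1)$ under $m^3 = o(n)$, since this is what feeds the $o(1)$ piece in the conclusion. Because $K_{n,m}$ contains the multivariate gamma $\Gamma_m(n/2)=\pi^{m(m-1)/4}\prod_{i=1}^m \Gamma((n-i+1)/2)$ together with the explicit $n$-powers and the factor $e^{-nm/2}$ produced by the change of variables, I would apply Stirling's expansion $\log\Gamma(z)=(z-\tfrac12)\log z - z + \tfrac12\log(2\pi)+O(1/z)$ to each of the $m$ gamma factors with $z=(n-i+1)/2$, sum, and verify that all $n$-polynomial pieces cancel exactly against the explicit $n$-powers and $e^{-nm/2}$. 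The residual Stirling remainders aggregate to $O(m/n)$, while the subleading logarithmic corrections (coming from expanding $\log((n-i+1)/2)$ about $\log(n/2)$) aggregate to $O(m^3/n)$, which is $o(1)$ precisely under the hypothesis $m^3=o(n)$. This bookkeeping, rather than the Taylor estimate itself, is where the condition $m^3=o(n)$ is actually consumed, and it supplies the $o(1)$ claimed in the lemma, completing the proof.
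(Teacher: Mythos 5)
Your proposal is correct and follows essentially the same route as the paper: explicit Wishart and GOE eigenvalue densities, the substitution $\mu_i=n+\sqrt{n}\,\nu_i$, the Taylor bound $|\log(1+x)-(x-x^2/2)|\leq|x|^3$ for $|x|\leq 2/3$, and a Stirling/gamma-ratio computation showing the normalizing constants differ by $o(1)$ under $m^3=o(n)$ (the paper isolates this last step as a separate appendix lemma, but the bookkeeping is the same). You also correctly identify that $m^3=o(n)$ is consumed in the constant comparison rather than in the Taylor estimate.
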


Let $r_{m,n}=2\sqrt{m\log p}+\omega_n$, where $\omega_n$ is as in \eqref{green_fresh}.   Then for $t$ such that $\delta\leq t\leq \omega_n$,
\begin{equation}\label{eq:case3-main}
\begin{split}
    & \PP\left(\frac{1}{\sqrt{n}}
  \left(\lambda_1(W_{\{1,...,m\}})-n\right)\geq 2\sqrt{m\log p} +t
  \right)\\
  \leq & \PP\left(\frac{1}{\sqrt{n}}
  \left(\lambda_1(W_{\{1,...,m\}})-n\right)\geq 2\sqrt{m\log p} +t, \max_{1\leq i \leq m}|\nu_i|\leq r_{m,n}
  \right)\\
  &~~~~~~~~~~~~~~~~~~~~~~~~~~~~~~~~~~~~+ \PP\big(
\max_{1\leq i \leq m}|\nu_i|> r_{m,n}
  \big).
\end{split}
\end{equation}
There are three probabilities above, denote the second one  by  $H_{n}$.
For $H_{n}$,  we use the change-of-measure argument. In fact,
\begin{equation}
  \begin{split}
H_{n}  = & \int_{v_1\geq 2\sqrt{m\log p} +t, \|v\|_{\infty}\leq r_{m,n}} g_{n,m}(v)dv\\
    = & \int_{v_1\geq 2\sqrt{m\log p} +t,\|v\|_{\infty}\leq r_{m,n}} \exp\{ \log g_{n,m}(v)-  \log \fwig{m}(v) \}\fwig{m}(v) dv\\
    = & \exp\Big\{{o(1)+} O(m^2n^{-1/2}r_{m,n}) + O(m^2n^{-1} r_{m,n}^2)+O(m n^{-1/2} r_{m,n}^3)\Big\}\\
    & ~~~~~~~~~~~~~~~~~~~~~~~~~~~~~~~~~~~~~~~~\cdot\int_{v_1\geq 2\sqrt{m\log p} +t,\|v\|_{\infty}\leq r_{m,n}} \fwig{m}(v) dv\\
    \leq &{2\cdot}\exp\Big\{O(m^2n^{-1/2}r_{m,n}) + O(m^2n^{-1} r_{m,n}^2)+O(m n^{-1/2} r_{m,n}^3)\Big\}\\
    &~~~~~~~~~~~~~~~~~~~~~~~~~~~~~~~~~~~~~~ \cdot\PP\left(
    \lambda_1(\tW_{\{1,...m\}})\geq 2\sqrt{m\log p} +t
    \right).
  \end{split}
\end{equation}
Now
\beaa
& & O(m^2n^{-1/2}r_{m,n}) + O(m^2n^{-1} r_{m,n}^2)+O(m n^{-1/2} r_{m,n}^3)\\
&=& \Big(\frac{m}{r_{m,n}^2}+ \frac{m}{\sqrt{n}r_{m,n}}+1\Big)\cdot O(m n^{-1/2} r_{m,n}^3)\\
& = & O(m n^{-1/2} r_{m,n}^3)
\eeaa
since $r_{m,n}>\sqrt{m\log p}$ and $m=o(n)$. By the definition of $\omega_n$  in \eqref{green_fresh},
\beaa
& & m n^{-1/2} r_{m,n}^3\\
&=& m n^{-1/2}\cdot O\Big((m\log p)^{3/2} + m^{3/2}(\log n)^3(\log\log\log p)^3(\log p)^{-3/2}\Big)\\
& = & \sqrt{m\log p}\cdot O\Big(\frac{m^2\log p}{\sqrt{n}}+\frac{m^2(\log n)^3(\log\log\log p)^3}{\sqrt{n}(\log p)^2}\Big)\\
& = & o(\sqrt{m\log p})
\eeaa
where Assumption 1 from (\ref{nice_birth_1}) is used. Therefore,
\begin{equation}
  H_{n}\leq
     \exp{\left(o\big(\sqrt{m\log p}\,\big)\right)} \cdot \PP\left(
    \lambda_1(\tW_{\{1,...m\}})\geq 2\sqrt{m\log p} +t
    \right).
\end{equation}
Note that $t\leq \frac{1}{\beta}e^{\beta t}$ for any $\beta>0$ and $t>0$. It follows from  \eqref{eq:wig-tail-p} that
\begin{equation}
\begin{split}
  &\sup_{t\geq \delta}\big\{p^m e^{\alpha t} t^2H_{n}\big\}
\\
  \leq & \sup_{t\geq \delta}\exp\Big\{
  -\frac{1}{2}t\sqrt{m\log p}  + O(m\log\log p) + o(\sqrt{m\log p}\,)
  \Big\}\\
  \leq & \sup_{t\geq \delta}\exp\Big\{
  -\frac{1}{2}\sqrt{m\log p}\cdot (\delta+o(1))
 + o(\sqrt{m\log p}\,)
  \Big\}\\
  = & o(1)
\end{split}
\end{equation}
by the fact $t\geq \delta$ and Assumption 1.
Combining this with \eqref{eq:case3-main}, we have
\begin{equation}\label{eq:to-bound-second}
  \begin{split}
  &\sup_{\delta\leq t\leq \delta\vee\omega_n}\left\{p^m e^{\alpha t} t^2\cdot\PP\Big(
  \frac{\lambda_1(W_{\{1,...,m\}})-n}{\sqrt{n}}\geq 2\sqrt{m\log p} +t
  \Big)\right\}\\
   \leq & o(1)+p^m e^{\alpha\omega_n+2\log \omega_n}\cdot  \PP\big(
  \max_{1\leq i \leq m}|\nu_i|\geq r_{m,n}
  \big).
\end{split}
\end{equation}\label{eq:tail-max}
We next analyze $\PP\big(\max_{1\leq i \leq m}|\nu_i|\geq r_{m,n}
  \big)$. Recall $r_{m,n}=2\sqrt{m\log p}+\omega_n$, where $\omega_n$ is as in \eqref{green_fresh}. Recall that we only discuss Case 3 when $\delta\leq t< \delta\vee\omega_n$, and this is only meaningful when $\omega_n>\delta$. Thus,  $\delta\vee\omega_n  = \omega_n\leq \frac{\sqrt{n}\delta}{100}$. Thus, from \eqref{eq:case2-final} we have
  \begin{equation}\label{eq:tail-wish-max}
  \lim_{n\to\infty}p^m e^{\alpha \omega_n+2\log \omega_n}  \PP\left(
  \frac{\lambda_1(W_{\{1,...,m\}})-n }{\sqrt{n}}\geq r_{m,n}
  \right)= 0.
\end{equation}
By \eqref{eq:case2-final_1},
\begin{equation}\label{eq:tail-min}
  \lim_{n\to\infty}p^m e^{\alpha \omega_n+2\log \omega_n}  \PP\left(
  \frac{\lambda_m(W_{\{1,...,m\}})-n }{\sqrt{n}}\leq- r_{m,n}
  \right)= 0.
\end{equation}
Since $\max_{1\leq i \leq m}|\nu_i|=\max(\nu_1, -\nu_m)$, by combining \eqref{eq:tail-wish-max} and \eqref{eq:tail-min}, we see that
\begin{equation}\label{eq:spec-bound-rough}
  \lim_{n\to\infty}p^m e^{\alpha \omega_n+2\log \omega_n}  \PP\left(
  \max_{1\leq i \leq m}|\nu_i|\geq r_{m,n}
  \right)= 0.
\end{equation}
Combining this with \eqref{eq:to-bound-second}, we further have
\begin{equation}\label{eq:case3-final}
  \lim_{n\to\infty}\sup_{\delta\leq t\leq \delta\vee\omega_n}p^m e^{\alpha t} t^2\PP\left(\frac{1}{\sqrt{n}}
  (\lambda_1(W_{\{1,...,m\}})-n)\geq 2\sqrt{m\log p} +t
  \right)= 0.
\end{equation}
This completes our analysis for Case 3.

Now, we combine \eqref{eq:case1-final}, \eqref{eq:case2-final} and \eqref{eq:case3-final}, and arrive at
\begin{equation}
  \lim_{n\to\infty}\sup_{t\geq \delta}p^m e^{\alpha t} t^2\PP\left(
  \frac{1}{\sqrt{n}}(\lambda_1(W_{\{1,...,m\}})-n)\geq 2\sqrt{m\log p} +t
  \right)= 0.
\end{equation}
This and \eqref{eq:split-wish} conclude
\begin{equation}\label{eq:upper-final}
\lim_{n\to\infty}\sup_{t\geq \delta} e^{\alpha t} t^2\PP\left(\frac{1}{\sqrt{n}}(T_{m,n,p}-n)\geq 2\sqrt{m\log p} +t
  \right)= 0.
\end{equation}
\end{proof}

\begin{proof}[Proof of Proposition~\ref{lower_bound_11}] Noticing the expectation  in \eqref{eq:mgf-thm-wish} is non-increasing in $\delta$. Without loss of generality, we assume $\delta<1$.

Here we discuss two scenarios that are similar to those in the proof of Theorem~\ref{thm:wigner}. They are 1) $\delta\leq t\leq 2\sqrt{m\log p}- m\sinf$ and 2) $ t>2\sqrt{m\log p}- m\sinf$, where $\sinf=\log\log\log p.$

\noindent{\bf Scenario 1: $\delta \leq t\leq 2\sqrt{m\log p}- m\sinf$.}
Similar to the proof of Theorem~\ref{thm:wigner},
we define the event $A_S$ as follows. For each $S\subset\{1,...,p\}$ with $|S|=m$, set
\bea\label{eq:as}
& &	 A_{S}=\Big\{ \frac{1}{\sqrt{n}}(W_{kk}-n)\geq \tau_{m,p,t},
	  \frac{W_{ij}}{\sqrt{n}}\geq \tau_{m,p,t}\nonumber\\
& &  ~~~~~~~~~~~~~~~~~~~~~~~~~~~~~~~\text{ for all } i,j,k\in S \text{ and } i<j
\Big\},
\eea
where $\tau_{m,p,t}=(1-\varepsilon_{m,p,t})\sqrt{\frac{4\log p}{m}}$ and $\varepsilon_{m,p,t}=(4m\log p)^{-1/2}t$.
We also define
\begin{equation}\label{eq:q}
  Q_{m,n,p}=\sum_{S\subset\{1,...,p\}:\, |S|=m} \mathbf{1}_{A_S}.
\end{equation}
Similar to the discussion between  \eqref{eq:lb-spec} and \eqref{eq:lower-final} in  the proof of Theorem~\ref{thm:wigner}, we have
\begin{equation}\label{eq:t-moment-bound}
\PP\left(T_{m,n,p}\leq 2\sqrt{m\log p}-t\right)\leq
\frac{Var(Q_{m,n,p})}{\E(Q_{m,n,p})^2}.
\end{equation}
In the rest of the discussion under Scenario 1, we will develop a lower bound for $\E(Q_{m,n,p})$ and an upper bound for $Var(Q_{m,n,p})$ in two steps.

\medskip

\noindent{\bf Step 1: the estimate of $\E(Q_{m,n,p})$}. For a $m\times m $ symmetric matrix $M$, {we use $\|M\|$ to denote its spectral norm.}
Set $S_0=\{1,2,\cdots, m\}$. Review
$\omega_n$ in \eqref{green_fresh}. Since $\{\mathbf{1}_{A_S};\, S\subset\{1,...,p\}\ \mbox{with}\ |S|=m\}$ are identically distributed, we have
\begin{equation}\label{eq:eq-main}
  \E(Q_{m,n,p})=\binom{p}{m}\PP(A_{S_0})
  \geq \binom{p}{m}\PP(A_{S_0}\cap \mathcal{L}_{m,n,p}),
 \end{equation}
where
\beaa
& & \mathcal{L}_{m,n,p}:
 = \left\{\frac{\|W_{\{1,...,m\}}-nI_{m}\|}{\sqrt{n}}\leq s_{m,n,p}\right\}\ \ \mbox{and} \\
 & &s_{m,n,p}=\max\left\{10\sqrt{m\log p},\, 2\sqrt{m\log p}+\omega_n\right\}.
\eeaa
It is easy to check that Assumption 1 in \eqref{nice_birth_1} implies
\begin{equation}\lbl{through_water}
\frac{s_{m,n,p}}{\sqrt{n}}\to 0\ \ \mbox{and}\ \ \frac{\sqrt{m}s_{m,n,p}^3}{\sqrt{n\log p}}\to 0.
\end{equation}
Similar to Lemma \ref{lemma:wig-eigen-approx},  we need the following lemma, which quantifies the speed that a Wishart matrix converges to a Wigner matrix. The difference is that the spectral norm $\|\cdot\|$ is used  here instead of $\|\cdot\|_{\infty}$ in Lemma \ref{lemma:wig-eigen-approx}.

Write $W_{\{1,...,m\}}$ for $W_S$ above \eqref{eq:t-stat} with $S=\{1,2,\cdots, m\}$. Review that  the Wigner matrix $\tW_{\{1,...,m\}}=(\tilde{w}_{ij})_{m\times m}$, where $\tilde{w}_{ij}$'s are  as in \eqref{eq:dist-wig}.

\begin{lemma}\label{lemma:log-likelihood-ratio}
 Let $f_{m,n}(w)$ be the density function of $\frac{1}{\sqrt{n}}(W_{\{1,...,m\}}-nI_m)$ and $\tf_m(w)$ be the density function of $\tW_{\{1,...,m\}}$.
  If $m^3=o(n)$, then
\beaa
& & \log f_{m,n}(w)-\log \tf_m(w)\\
&=&o(1) +{O\left(m^{2}n^{-1/2}\|w\|+ m^2n^{-1}\|w\|^2+ m n^{-1/2}\|w\|^{3}\right)}
\eeaa
for all $m\times m$ symmetric matrix $w$ with  $\|w\|\leq \frac{2}{3}\sqrt{n}$.
\end{lemma}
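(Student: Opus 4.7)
The plan is to write both densities in closed form, take the log-ratio, and expand the Wishart log-determinant in a Taylor series around the identity. The standard Wishart density of $W=W_{\{1,\dots,m\}}$ at a positive-definite $A$ is
\[
g(A)=\frac{1}{2^{nm/2}\Gamma_m(n/2)}|A|^{(n-m-1)/2}\exp\!\Big(-\tfrac12\tr(A)\Big),
\]
where $\Gamma_m(n/2)=\pi^{m(m-1)/4}\prod_{i=1}^m\Gamma((n-i+1)/2)$. Changing variables $V=(W-nI_m)/\sqrt{n}$ multiplies by the Jacobian $n^{m(m+1)/4}$, so $f_{m,n}(w)=n^{m(m+1)/4}g(nI_m+\sqrt n\,w)$. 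On the Wigner side, a direct computation gives $\log\tilde f_m(w)=-\tfrac14\tr(w^2)+C_m^{\mathrm{Wig}}$, where $C_m^{\mathrm{Wig}}=-m\log(2\sqrt\pi)-\tfrac{m(m-1)}{4}\log(2\pi)$.

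Next I would combine these and expand the log-determinant. Writing $|nI_m+\sqrt n\,w|=n^m|I_m+w/\sqrt n|$ and using $\|w/\sqrt n\|\le 2/3$ together with $\tr\log(I_m+X)=\tr(X)-\tfrac12\tr(X^2)+\tfrac13\tr(X^3)+\sum_{k\ge 4}\frac{(-1)^{k+1}}{k}\tr(X^k)$, I get
\[
\tfrac{n-m-1}{2}\log|nI_m+\sqrt n\,w|-\tfrac12\tr(nI_m+\sqrt n\,w)
=-\tfrac{nm}{2}+\tfrac{(n-m-1)m}{2}\log n+E_1+E_2+E_3+R,
\]
where the first-order terms in $\tr(w)$ collapse to $E_1=-\tfrac{m+1}{2\sqrt n}\tr(w)$, the second-order terms give $E_2=-\tfrac{n-m-1}{4n}\tr(w^2)=-\tfrac14\tr(w^2)+\tfrac{m+1}{4n}\tr(w^2)$, the third-order term is $E_3=\tfrac{n-m-1}{6n^{3/2}}\tr(w^3)$, and $R$ is the Taylor remainder of order $\ge 4$. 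Using $|\tr(w^k)|\le m\|w\|^k$ and the bound $\|w/\sqrt n\|\le 2/3$, the individual contributions to $\log f_{m,n}(w)-\log\tilde f_m(w)$ coming from $E_1,E_2-(-\tr(w^2)/4),E_3,R$ are bounded respectively by $O(m^2n^{-1/2}\|w\|)$, $O(m^2n^{-1}\|w\|^2)$, $O(mn^{-1/2}\|w\|^3)$, and (summing the geometric tail) $O(mn^{-1}\|w\|^4)=O(mn^{-1/2}\|w\|^3)\cdot O(\|w\|/\sqrt n)$, which is absorbed into the cubic error term.

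Finally I would check that the constant parts match up to $o(1)$. Collecting everything independent of $w$ leaves
\[
\Delta_{m,n}:=\tfrac{m(m+1)}{4}\log n+\tfrac{(n-m-1)m}{2}\log n-\tfrac{nm}{2}-\tfrac{nm}{2}\log 2-\log\Gamma_m(n/2)-C_m^{\mathrm{Wig}},
\]
and I need $\Delta_{m,n}=o(1)$ under $m^3=o(n)$. The natural tool is Stirling's expansion $\log\Gamma(x)=x\log x-x+\tfrac12\log(2\pi/x)+O(1/x)$ applied to each factor $\Gamma((n-i+1)/2)$ for $i=1,\dots,m$, followed by summation in $i$. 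After cancellation against the $\log n$ and $-nm/2$ pieces, the dominant leftover terms are of size $O(m^3/n)$ (from the $x\log x$ expansion of $\Gamma((n-i+1)/2)$ across $i=1,\dots,m$) plus $O(m^2/n)$ from the logarithmic Stirling correction, both of which are $o(1)$ under the hypothesis $m^3=o(n)$.

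The main obstacle is the careful bookkeeping in the last step: the product $\Gamma_m(n/2)=\pi^{m(m-1)/4}\prod_{i=1}^m\Gamma((n-i+1)/2)$ contributes a delicate sum where the $n\log n$-sized pieces must cancel exactly against $\tfrac{(n-m-1)m}{2}\log n-\tfrac{nm}{2}-\tfrac{nm}{2}\log 2$, and only the subleading remainders (to be shown $o(1)$) should survive. Once this constant identity is verified, the Taylor expansion bounds above deliver the stated estimate on $\log f_{m,n}(w)-\log\tilde f_m(w)$ for every symmetric $w$ with $\|w\|\le \tfrac23\sqrt n$.
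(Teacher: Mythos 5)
Your proposal is correct and follows essentially the same route as the paper: write both densities explicitly, change variables with the Jacobian $n^{m(m+1)/4}$, expand the Wishart log-determinant around $I_m$ so that the first- and second-order terms cancel against $-\tfrac{1}{2}\sqrt{n}\,\tr(w)$ and $-\tfrac14\tr(w^2)$ leaving exactly the $O(m^2n^{-1/2}\|w\|+m^2n^{-1}\|w\|^2+mn^{-1/2}\|w\|^3)$ errors, and control the constant via Stirling with the $m^3=o(n)$ hypothesis (the paper isolates this last step as Lemma~\ref{lemma:AB}, proved exactly by the bookkeeping you describe, with dominant leftover $O(m^3/n)$).
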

Below, we combine the above lemma and some change of measure arguments to obtain a lower bound of $\PP(A_{S_0}\cap \mathcal{L}_{m,n,p})$. Define a non-random set $B_{m,p}=\{w_{ij}: w_{ij}\geq\tau_{m,p,t}, 1\leq i\leq j\leq m\}$. By the first limit from \eqref{through_water}, $s_{m,n,p}\leq \frac{2}{3}\sqrt{n}$. Therefore, from Lemma \ref{lemma:log-likelihood-ratio} we have
\begin{equation}
  \begin{split}
    &\PP\left(A_{S_0}\cap\mathcal{L}_{m,n,p}
    \right)\\
    =&
    \int_{w\in B_{m,p},\|w\|\leq s_{m,n,p} } e^{\log f_{m,n}(w)}
    d w   \\
    = & \int_{w\in B_{m,p},\|w\|\leq s_{m,n,p}  }\tf_{m}(w)\cdot \exp\{
    \log f_{m,n}(w)-\log\tf_{m}(w)
    \} dw\\
    = &\exp\left\{o(1)+{O\left(\frac{m^{2}s_{m,n,p}}{\sqrt{n}}+\frac{m^2s_{m,n,p}^2}
    {n}+\frac{m s_{m,n,p}^3}{\sqrt{n}}\right)}\right\}\\
    & ~~~~~~~~~~~~~~~~~~~~~~~~~~~~~~~
    \cdot\PP(
    \tA_{\{1,...,m\}}\cap\tilde{\mathcal L}_{m,n,p}
    )\\
    \geq & \frac{1}{2}\cdot \exp\left\{{O\left(\frac{m^{2}s_{m,n,p}}{\sqrt{n}}+\frac{m^2s_{m,n,p}^2}
    {n}+\frac{m s_{m,n,p}^3}{\sqrt{n}}\right)}\right\}\\
    & ~~~~~~~~~~~~~~~~~~~~~~~~~~~~~~~
    \cdot \left[\PP(\tA_{\{1,...,m\}}) - \PP(\tilde{\mathcal L}_{m,n,p}^c
    )\right],
  \end{split}
\end{equation}
where $\tilde{A}_{\{1,...,m\}}$ is as in \eqref{eq:tas} with $S=\{1,\cdots, m\}$ and $\tilde{\mathcal L}_{m,n,p}
=\{
\|\tilde{W}_{\{1,...,m\}}\|\leq s_{m,n,p}\}.
$
Under Assumption 1 in \eqref{nice_birth_1}, evidently $\frac{m}{s_{m,n,p}^2}\to 0$ and $\frac{m}{\sqrt{n}\,s_{m,n,p}}\to 0$. This implies that
\beaa
& & \frac{m^{2}s_{m,n,p}}{\sqrt{n}}+\frac{m^2s_{m,n,p}^2}
    {n}+\frac{m s_{m,n,p}^3}{\sqrt{n}}\\
    &=&\frac{ms_{m,n,p}^3}{\sqrt{n}}
    \left(\frac{m}{s_{m,n,p}^2}+\frac{m}{\sqrt{n}\,s_{m,n,p}}+1\right)\\
    & = & O\left(\frac{ms_{m,n,p}^3}{\sqrt{n}}\right).
\eeaa
Thus, we have
\begin{equation}\label{eq:pas-cm}
	\begin{split}
    \PP\left(A_{S_0}\cap\mathcal{L}_{m,n,p}
    \right)
     \geq  \frac{1}{2}\cdot {e^{O(m s_{m,n,p}^3/\sqrt{n})}}\left\{\PP(
    \tA_{\{1,...,m\}}) - \PP(\tilde{\mathcal L}_{m,n,p}^c
    )\right\}.
  \end{split}
\end{equation}
Obviously, $\E(Q_{m,n,p})=\binom{p}{m} \PP(
    A_{\{1,...,m\}})$. Recalling  $\tA_{\{1,...,m\}}$ and $\tilde{Q}_{m,p}$ as in \eqref{eq:tas} and \eqref{eq:def-q}, respectively, we see
that $\E(\tilde{Q}_{m,p})=\binom{p}{m} \PP(
    \tA_{\{1,...,m\}})$.  Thus, we further have from  \eqref{eq:eq-main} and \eqref{eq:pas-cm} that
    \begin{equation}\label{eq:eq-first-1}
    \begin{split}
    	\E(Q_{m,n,p})
    \geq \frac{1}{2}\cdot {e^{O(m s_{m,n,p}^3/\sqrt{n})}}\left\{\E(\tilde{Q}_{m,p}) -\binom{p}{m} \PP(\tilde{\mathcal L}_{m,n,p}^c
    )\right\}.
    \end{split}
    \end{equation}
To further obtain a lower bound of the above expression, we analyze each term on the right-hand side.
Recall the definition of $\varepsilon_{m,p,t}$ below \eqref{eq:tas}, we know $\varepsilon_{m,p,t}\in (0, 1)$. By \eqref{eq:etq-approx},
\bea\label{eq:tq-rough-lower}
	\E(\tilde{Q}_{m,p})
	&\geq & \exp\left\{\varepsilon_{m,p,t} m\log p + O(m^2\log\log p )\right\}\nonumber\\
	&\geq & \exp\left\{\frac{\delta}{2}\sqrt{m\log p} + O(m^2\log\log p )\right\}\nonumber\\
& \geq & \exp\left\{\frac{\delta}{4}\sqrt{m\log p}\right\}
\eea
where the condition $m = o((\log p)^{1/3} /\log\log p)$ from Assumption 1 in \eqref{nice_birth_1} is essentially used in the last step.
Now,
\begin{equation}\label{eq:two-tails-wig}
  \begin{split}
   & \binom{p}{m} \PP\left(\tilde{\mathcal L}_{m,n,p}^c
    \right)\\
    &\leq p^m \PP\left(\|\tilde{W}_{\{1,...,m\}}\|\geq s_{m,n,p}\right)\\ 
    & \leq p^m \PP\left(\lambda_1(\tilde{W}_{\{1,...,m\}})\geq s_{m,n,p}\right)+p^m\PP\left(\lambda_m(\tilde{W}_{\{1,...,m\}}\leq -s_{m,n,p})\right)\\
    &=2 p^m \PP\left(\lambda_1(\tilde{W}_{\{1,...,m\}})\geq s_{m,n,p}\right),
  \end{split}
\end{equation}
where the fact that $\tilde{W}_{\{1,..,m\}}$ and $-\tilde{W}_{\{1,...,m\}}$ have the same distribution is used in the last step. The following lemma help us estimate the last probability.
\begin{lemma}\label{eq:wig-margin-tail_night}[Lemma 4.1 from \cite{Jiang:2015ir}]
Let $\tW_{\{1,...,m\}}$ be defined by $\tW_{S}$ above \eqref{eq:tt-stat} with $S=\{1,...,m\}$. Then there is a constant $\kappa>0$ such that
\begin{equation}
	 P\Big(\lambda_1(\tW_{\{1,...,m\}})\geq x \text{ or } \lambda_{m}(\tW_{\{1,...,m\}})\leq -x\Big)
	 \leq \kappa\cdot e^{-\frac{x^2}{4} + \kappa \sqrt{m}x}
\end{equation}
for all $x>0$ and all $m\geq 2$.
\end{lemma}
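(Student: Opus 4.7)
The plan is to prove the one-sided bound $\PP(\lambda_1(\tW_{\{1,\dots,m\}}) \geq x) \leq \frac{\kappa}{2}\, e^{-x^2/4 + \kappa\sqrt{m}\,x}$ and then obtain the stated two-sided bound from the symmetry $\tW \stackrel{d}{=} -\tW$ (so $\lambda_m(\tW) \stackrel{d}{=} -\lambda_1(\tW)$) via a union bound. The two driving ingredients are a mean estimate $\E\lambda_1 \leq C_1\sqrt{m}$ and Gaussian concentration of $\lambda_1$ about its mean.

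For the mean, fix a $(1/2)$-net $\mathcal{N}$ of the unit sphere $S^{m-1}$ with $|\mathcal{N}| \leq 5^m$; the standard net argument for symmetric matrices gives $\lambda_1(\tW) \leq 2\max_{v \in \mathcal{N}} v^{\intercal} \tW v$. For each $v \in S^{m-1}$, the variable $v^{\intercal}\tW v = \sum_i v_i^2 \tW_{ii} + 2\sum_{i<j} v_i v_j \tW_{ij}$ is centered Gaussian, and using $\tW_{ii} \sim N(0,2)$ and $\tW_{ij} \sim N(0,1)$ (independent for $i\leq j$) one checks ${\rm Var}(v^{\intercal}\tW v) = 2\sum_i v_i^4 + 4\sum_{i<j} v_i^2 v_j^2 = 2(\sum_i v_i^2)^2 = 2$. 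The classical Gaussian maximal inequality then gives $\E \max_{v\in\mathcal{N}} v^{\intercal} \tW v \leq \sqrt{4\log|\mathcal{N}|} \leq 2\sqrt{m\log 5}$, whence $\E\lambda_1 \leq 4\sqrt{m\log 5} =: C_1\sqrt{m}$.

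For the concentration step, write $\lambda_1 = F(g)$, where $g$ denotes the vector of $m(m+1)/2$ independent $N(0,1)$ entries coupled to $\tW$ by $\tW_{ii} = \sqrt{2}\,g_{ii}$ and $\tW_{ij} = g_{ij}$ for $i<j$. Weyl's inequality yields $|F(g) - F(g')| \leq \|\tW - \tW'\|_F$, and a direct expansion produces $\|\tW-\tW'\|_F^2 = 2\sum_i(g_{ii}-g'_{ii})^2 + 2\sum_{i<j}(g_{ij}-g'_{ij})^2 \leq 2\|g-g'\|^2$. Hence $F$ is $\sqrt{2}$-Lipschitz, and the Gaussian concentration inequality supplies $\PP(\lambda_1 - \E\lambda_1 \geq t) \leq e^{-t^2/4}$ for every $t \geq 0$.

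Combining the two ingredients, for $x \geq C_1\sqrt{m}$ set $t = x - \E\lambda_1 \geq 0$ to get $\PP(\lambda_1 \geq x) \leq e^{-(x - C_1\sqrt{m})^2/4} \leq e^{-x^2/4 + (C_1/2)\sqrt{m}\,x}$, which has the required form. For $x < C_1\sqrt{m}$, the claimed right-hand side exceeds $1$ once $\kappa$ is chosen sufficiently large, so the inequality is trivial. Absorbing the one-sided bounds and the union-bound factor of $2$ into a single constant $\kappa$ completes the argument. The only delicate computational point is the variance identity ${\rm Var}(v^{\intercal}\tW v) = 2$, which relies crucially on the $N(0,2)$ choice on the diagonal and is precisely what forces the exponent to be $-x^2/4$; every other step is a routine application of Gaussian concentration plus $\varepsilon$-netting.
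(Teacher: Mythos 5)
Your proposal is correct in substance, and it is worth noting that the paper itself offers no proof of this lemma at all: it is imported verbatim as Lemma 4.1 of \cite{Jiang:2015ir} (whose argument, judging from the paper's use of inequality (4.8) of that reference in the proof of the companion Lemma~\ref{eq:wig-margin-tail}, proceeds through explicit tail estimates for the GOE rather than through abstract concentration). Your route --- an $\varepsilon$-net bound giving $\E\lambda_1(\tW_{\{1,\dots,m\}})\le C_1\sqrt{m}$, combined with the Borell--TIS inequality applied to the $\sqrt{2}$-Lipschitz map $g\mapsto\lambda_1$, then symmetry and a union bound for the two-sided statement --- is a clean, self-contained derivation, and you correctly identify that the variance identity ${\rm Var}(v^{\intercal}\tW v)=2\sum_i v_i^4+4\sum_{i<j}v_i^2v_j^2=2$ (which hinges on the $N(0,2)$ diagonal) is exactly what produces the exponent $-x^2/4$; the case split at $x=C_1\sqrt m$ and the absorption of all constants into $\kappa$ are also handled correctly. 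The one step I would tighten is the net approximation: the standard bound is $\|A\|_{op}\le(1-2\varepsilon)^{-1}\max_{v\in\mathcal N}|v^{\intercal}Av|$, which degenerates at $\varepsilon=1/2$, and the pointwise inequality $\lambda_1\le\max_{\mathcal N}v^{\intercal}Av+2\varepsilon\|A\|_{op}$ does not immediately yield $\lambda_1\le 2\max_{\mathcal N}v^{\intercal}Av$ when $\|A\|_{op}=-\lambda_m$. This is repaired with no change to the conclusion by taking a $(1/4)$-net of cardinality at most $9^m$ and bounding $\E\lambda_1\le\E\|\tW\|_{op}\le 2\,\E\max_{\mathcal N}|v^{\intercal}\tW v|\le 2\sqrt{2\cdot 2\log(2\cdot 9^m)}$, which is still $O(\sqrt m)$; since every constant is swallowed by $\kappa$, the lemma follows as you claim.
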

By letting $x=s_{m,n,p}$ in Lemma~\ref{eq:wig-margin-tail_night}, we have
\begin{equation}
  \PP\left(\lambda_1(\tilde{W}_{\{1,...,m\}})\geq s_{m,n,p}\right)
  \leq
  \exp\left\{
  -\frac{s_{m,n,p}^2}{4}+\kappa \sqrt{m} s_{m,n,p}\right\}.
\end{equation}
Combining the above inequality with \eqref{eq:two-tails-wig}, we arrive at
\begin{equation}
  \begin{split}
   \binom{p}{m} \PP(\tilde{\mathcal L}_{m,n,p}^c
    )
     \leq  2 \cdot\exp\left\{m\log p
  -\frac{s_{m,n,p}^2}{4}+\kappa \sqrt{m} s_{m,n,p}
  \right\}.
  \end{split}
\end{equation}
Since $s_{m,n,p}\geq 10\sqrt{m\log p}$, we know $m\log p
  -\frac{1}{4}s_{m,n,p}^2\leq -\frac{6}{25}s_{m,n,p}^2.$ Moreover, $\sqrt{m} s_{m,n,p}=o(s_{m,n,p}^2)$.
Consequently,
\begin{equation}\label{paint_today}
  \begin{split}
  \binom{p}{m} \PP\left(\tilde{\mathcal L}_{m,n,p}^c
    \right)
     \leq  \exp\left\{
  -\Big(\frac{6}{25}+o(1)\Big)s_{m,n,p}^2
  \right\}.
  \end{split}
\end{equation}
Comparing the above inequality with \eqref{eq:tq-rough-lower}, we arrive at
  \begin{equation}
  	\binom{p}{m} \PP\left(\tilde{\mathcal L}_{m,n,p}^c
    \right)=o(1)=o(\E(\tilde{Q}_{m,p})).
  \end{equation}
 This result, combined with \eqref{eq:eq-first-1}, gives
 \begin{equation}\label{eq:eq-lower-teq}
 	\E(Q_{m,n,p})
    \geq \frac{1}{3}\cdot e^{O(m s_{m,n,p}^3/\sqrt{n})}\E(\tilde{Q}_{m,p}),
 %
 \end{equation}
which joint with  \eqref{eq:etq-approx} concludes
 \bea\label{eq:eq-lower-1}
& & 	\E(Q_{m,n,p})\nonumber\\
&\geq & \frac{1}{3}\exp\big\{ [1- (1-\varepsilon_{m,p,t})^2] m\log p\nonumber\\
 &&~~~~~~~~~~~~~~~~~~+ O(m^2\log\log p + mn^{-1/2}s_{m,n,p}^3)\big\}.
 \eea
This completes our analysis for $\E(Q_{m,n,p})$.

\medskip

\noindent{\bf Step 2: the estimate of $Var(Q_{m,n,p})$}. Replacing ``$\tA_S$" in \eqref{eq:tas} with ``$A_S$" in \eqref{eq:q}, and using the same argument as obtaining \eqref{eq:var-q-split},  we have from Lemma~\ref{lemma:combine} that
\bea\label{eq:var-q-main}
& &   Var(Q_{m,n,p})\nonumber\\
&\leq &  \E(Q_{m,n,p})+ m \max_{l=1,...,m-1}p^{2m-l} \PP\left( A_{ \{1,...,m\}}\cap A_{ \{1,...,l,m+1,...,2m-l\}}\right).~~~
\eea
Now we bound the last term above. Review $\mathcal{L}_{2m,n,p}$ below \eqref{eq:eq-main}. Trivially,
\begin{equation}\label{eq:var-q-first-main}
  \begin{split}
    &\PP\left( A_{ \{1,...,m\}}\cap A_{ \{1,...,l,m+1,...,2m-l\}}\right)\\
    \leq & \PP\left( A_{ \{1,...,m\}}\cap A_{ \{1,...,l,m+1,...,2m-l\}}\cap \mathcal{L}_{2m,n,p}\right) + \PP(\mathcal{L}_{2m,n,p}^{c}).
  \end{split}
\end{equation}
By \eqref{paint_today},
\begin{equation}\label{eq:var-q-first-2}
	mp^{2m}\PP(\mathcal L_{2m,n,p}^c)= o(1).
\end{equation}
%
Let $f_{2m,n}(w)$ be the density function of
$\frac{1}{\sqrt{n}}(W_{\{1,...,2m\}}-nI_{2m})$ and $\tf_{2m}(w)$ be the density function of $\tW_{\{1,...,2m\}}$. Review \eqref{eq:as}. Define (non-random) set
\beaa
& &	 B_{S}=\big\{(w_{ij})_{i,j\in S};\, w_{ij}\geq \tau_{m,p,t}\text{ for all } i,j\in S \text{ with } i\leq j
\big\}.
\eeaa
Then,
\beaa
& & \PP\left( A_{ \{1,...,m\}}\cap A_{ \{1,...,l,m+1,...,2m-l\}}\cap \mathcal{L}_{2m,n,p}\right)\\
&=& \int_{B}\tf_{2m}(w)\cdot \exp\{
    \log f_{2m,n}(w)-\log\tf_{2m}(w)
    \} dw
\eeaa
where $B:=B_{\{1,\cdots,m\}}\cap B_{\{1,...,l,m+1,...,2m-l\}}\cap\{\|w\|\leq s_{2m,n,p}\}.$
By Lemma \ref{lemma:log-likelihood-ratio} and by a change-measure argument similar to the one getting \eqref{eq:pas-cm}, we see
\begin{equation}\label{eq:var-q-first-1}
\begin{split}
  &\PP\left( A_{ \{1,...,m\}}\cap A_{ \{1,...,l,m+1,...,2m-l\}}\cap \mathcal{L}_{2m,n,p}\right)\\
   \leq & 2\cdot e^{O(ms_{m,n,p}^3/\sqrt{n})}\PP(\tA_{\{1,...,m\}}\cap \tA_{\{1,...,l,m+1,...,2m-l\}}).
\end{split}
\end{equation}
The benefit of the above step is transferring the probability on the Wishart matrix to that on the Wigner matrix up to a certain error.  Combining \eqref{eq:var-q-first-main}-\eqref{eq:var-q-first-1}, we have
\begin{equation}
\begin{split}
  &m p^{2m-l}
  \PP\left( A_{ \{1,...,m\}}\cap A_{ \{1,...,l,m+1,...,2m-l\}}\right)\\
  \leq & 2\cdot e^{O(ms_{m,n,p}^3/\sqrt{n})} \cdot mp^{2m-l}\PP(\tA_{\{1,...,m\}}\cap \tA_{\{1,...,l,m+1,...,2m-l\}})
  + o(1).
\end{split}
\end{equation}
Combining this with \eqref{eq:var-q-main}, we have
\begin{equation}
    Var(Q_{m,n,p})
    \leq  \E(Q_{m,n,p})+ 2\cdot e^{O(s_{m,n,p}^3/\sqrt{n})}\cdot G_m +o(1).
\end{equation}
where
\beaa
G_m:=\max_{l=1,...,m-1}\left\{mp^{2m-l}\PP(\tA_{\{1,...,m\}}\cap \tA_{\{1,...,l,m+1,...,2m-l\}})\right\}.
\eeaa
Thus,
\begin{equation}\label{eq:vq-eq-ratio-1}
	\begin{split}
		  \frac{Var(Q_{m,n,p})}{\E(Q_{m,n,p})^2}
\leq &  e^{O(ms_{m,n,p}^3/\sqrt{n})} \big[\E(Q_{m,n,p})\big]^{-2} \cdot G_m\\
&~~~~+ \big[{\E(Q_{m,n,p})}\big]^{-1}+o\Big(\big[\E(Q_{m,n,p})\big]^{-2}\Big).
	\end{split}
\end{equation}
According to \eqref{eq:ratio-intermed} and \eqref{eq:eq-lower-teq}, the first term on the right-hand side of the above inequality is no more than
\begin{equation}
\begin{split}
9\cdot\exp\left\{-\Big[1-\frac{1}{m}(1-\varepsilon_{m,p,t})^2\Big]\log p+O\Big(m^2\log\log p+\frac{ms_{m,n,p}^3}{\sqrt{n}}\Big)\right\}.
\end{split}
	\end{equation}
Notice that $1-\varepsilon_{m,p,t}\leq 1$ and $m\geq 2$ and $O(m^2\log\log p+mn^{-1/2}s_{m,n,p}^3)=o(\log p)$. Thus, the above display further implies
\begin{equation}\label{eq:ratio-vq-eq-2}
\begin{split}
	e^{O(s_{m,n,p}^3/\sqrt{n})} \big(\E(Q_{m,n,p})\big)^{-2} \cdot G_m
	\leq  \exp\left\{-\Big(\frac{1}{2}+o(1)\Big)\log p\right\}.
\end{split}
	\end{equation}
We next study the last two terms from \eqref{eq:vq-eq-ratio-1}.

By the condition $m=o((\log p)^{1/3}/\log\log p)$ from Assumption 1 in \eqref{nice_birth_1} and the second limit in \eqref{through_water},
\begin{equation}\lbl{Hosana}
O\left(\frac{m s_{m,n,p}^3}{\sqrt{n}}+m^2\log\log p\right)=o\big(\sqrt{m\log p}\,\big).
\end{equation}
Recall \eqref{talk_hire}. It is readily seen that
 $[1- (1-\varepsilon_{m,p,t})^2] m\log p\geq \varepsilon_{m,p,t} m\log p\geq \frac{t}{2}\sqrt{m\log p}$. Consequently, it is known from \eqref{eq:eq-lower-1} that
\beaa
\E(Q_{m,n,p})
\geq  \frac{1}{3}\cdot \exp\Big\{ \frac{t}{2}\sqrt{m\log p}\Big\}
\eeaa
uniformly over $\delta \leq t\leq 2\sqrt{m\log p}- m\sinf$.  Therefore, we conclude  from \eqref{eq:eq-lower-1} and \eqref{Hosana} that
\begin{equation}\label{eq:ratio-vq-eq-3}
\begin{split}
	&\big({\E(Q_{m,n,p})}\big)^{-1}+o\Big(\big({\E(Q_{m,n,p})\big)^{-2}}\Big)\\
	=& (1+o(1))\big({\E(Q_{m,n,p})}\big)^{-1}\\
	\leq &3\cdot \exp\left\{-\Big(\frac{1}{2}+o(1)\Big)t\sqrt{m\log p} \right\}.	
\end{split}
\end{equation}
Combining \eqref{eq:vq-eq-ratio-1}-\eqref{eq:ratio-vq-eq-3}, we see
\begin{equation}
	\begin{split}
		  &\frac{Var(Q_{m,n,p})}{\E(Q_{m,n,p})^2}\\
\leq & \exp\Big\{-\Big(\frac{1}{2}+o(1)\Big)\log p\Big\}
+ 3\cdot \exp\Big\{-\Big(\frac{1}{2}+o(1)\Big)t\sqrt{m\log p} \Big\}.
	\end{split}
\end{equation}
By \eqref{eq:t-moment-bound} and the above inequality,
\begin{equation}
\begin{split}
	 &\PP\left(T_{m,n,p}\leq 2\sqrt{m\log p}-t\right)\\
	 \leq & \exp\Big\{-\Big(\frac{1}{2}+o(1)\Big)\log p\Big\}
+ 3\cdot \exp\Big\{-\Big(\frac{1}{2}+o(1)\Big)t\sqrt{m\log p} \Big\}.
\end{split}
\end{equation}
Finally, from the inequality $t^2\leq 2 e^t$ we have that
\begin{equation}\label{eq:scenario1-final}
\begin{split}
	  \limsup_{n\to\infty}\sup_{\delta\leq t\leq2\sqrt{m\log p}- m\sinf} e^{\alpha t}t^2\PP\left(T_{m,n,p}\leq 2\sqrt{m\log p}-t\right)=0
\end{split}
\end{equation}
for any $\alpha>0$ and $\delta>0$.

\medskip

\noindent{\bf Scenario 2: $ t>2\sqrt{m\log p}- m\sinf$.} Review \eqref{eq:t-stat}. By the fact that $\lambda_1(M)\geq \max_{1\leq i \leq m}M_{ii}$ for any non-negative definite matrix $M=(M_{ij})_{m\times m}$, we have
\begin{equation}
  T_{m,n,p}
  \geq \max_{S\subset\{1,...,p\},|S|=m}\lambda_1(W_{S})\geq \max_{1\leq i\leq p} W_{ii},
\end{equation}
where $W_{ii}=\sum_{j=1}^nx_{ji}^2$ and $\{W_{ii};\, 1\leq i \leq m\}$ are i.i.d. random variables.  Thus, by independence,
\bea\label{eq:bound-t-iid}
 &     & \PP\left(\frac{T_{m,n,p}-n}{\sqrt{n}}\leq 2\sqrt{m\log p}-t\right)\nonumber\\
 &\leq & \PP\left(\frac{W_{11}-n}{\sqrt{n}}\leq 2\sqrt{m\log p}-t\right)^p.
\eea
Note that $W_{11}=\sum_{j=1}^n x_{j1}^2$ is a sum of i.i.d. random variables with $Var(x_{11})=2$ and $\E(x_{11}^6)<\infty$.
We discuss two situations: $2\sqrt{m\log p}-m\sinf\leq t\leq 4\sqrt{m\log p}$ and $t\geq 4\sqrt{m\log p}$.

Assuming $2\sqrt{m\log p}-m\sinf\leq t\leq 4\sqrt{m\log p}$ for now. Recalling $\Phi(x)=(2\pi)^{-1/2}\int_{-\infty}^xe^{-t^2/2}\,dt$,
we get from  the Berry-Essen Theorem that
\begin{equation}
\begin{split}
	  &\PP\left(\frac{W_{11}-n}{\sqrt{n}}\leq 2\sqrt{m\log p}-t\right)\\
	  &\leq \Phi\left(\sqrt{2m\log p}-\frac{t}{\sqrt{2}}\right)+\frac{\kappa}{\sqrt{n}}\\
  \leq &2\cdot\max\left\{\Phi\left(\sqrt{2m\log p}-\frac{t}{\sqrt{2}}\right),\frac{\kappa}{\sqrt{n}}\right\}
\end{split}
\end{equation}
for some constant $\kappa>0$.
Combine the above inequalities with \eqref{eq:very-small-t-first-case} to see
\begin{equation}
  \PP\left(\frac{T_{m,n,p}-n}{\sqrt{n}}\leq 2\sqrt{m\log p}-t\right)\leq 2\cdot\max\left\{
  e^{-mp^{0.9}},e^{-\frac{p\log n}{2}(1+o(1))}
  \right\}.
 \end{equation}
By \eqref{nice_birth_1}, $\sqrt{m\log p}\leq \log p$. It is easy to check
\bea\label{eq:scenario2-first-final}
&& \lim_{n\to\infty} \sup_{2\sqrt{m\log p}-m\sinf\leq t\leq 4\sqrt{m\log p}} e^{\alpha t}t^2 \PP\left(\frac{T_{m,n,p}-n}{\sqrt{n}}\leq 2\sqrt{m\log p}-t\right) \nonumber\\
& = & 0.
\eea

We proceed to the second situation: $t\geq 4\sqrt{m\log p}$. In this case, $2\sqrt{m\log p}-t\leq -2\sqrt{m\log p}$.
By Lemma 1 from \cite{laurent2000adaptive},
  \begin{equation}
    \PP(W_{11}-n\leq -2\sqrt{nx})\leq e^{-x}
  \end{equation}
for any $x>0$.  Thus,
  \begin{eqnarray*}
    \PP\left(\frac{W_{11}-n}{\sqrt{n}}\leq 2\sqrt{m\log p}-t\right)
     &\leq & \exp\left\{
    - \left(\frac{t}{2}-\sqrt{m\log p}\right)^2
    \right\}\\
    &\leq & \exp\left\{
    -\frac{t^2}{16}
    \right\}.
  \end{eqnarray*}
This inequality and \eqref{eq:bound-t-iid} yield
\begin{equation}
  \PP\left(
  \frac{T_{m,n,p}-n}{\sqrt{n}}\leq 2\sqrt{m\log p} - t
  \right)
  \leq \exp\left\{
    -\frac{pt^2}{16}
    \right\}.
\end{equation}
Consequently,
\begin{equation}
\begin{split}
	  &\sup_{t\geq 4\sqrt{m\log p}}e^{
  \alpha t
  }t^2 \PP\left(
  \frac{T_{m,n,p}-n}{\sqrt{n}}\leq 2\sqrt{m\log p} - t
  \right)\\
  \leq & \sup_{t\geq 4\sqrt{m\log p}}\exp\left\{
    -\frac{pt^2}{16}+\alpha t +2\log t
    \right\} \\
    \leq & \exp\left\{
- m p (\log p) (1+o(1))
    \right\}.
\end{split}
\end{equation}
Hence,
\begin{equation}\label{eq:scenario2-second-final}
\begin{split}
	  \lim_{n\to\infty}\sup_{t\geq 4\sqrt{m\log p}}e^{
  \alpha t
  }t^2 \PP\left(
  \frac{T_{m,n,p}-n}{\sqrt{n}}\leq 2\sqrt{m\log p} - t
  \right)=0.
\end{split}
\end{equation}
By collecting  \eqref{eq:scenario1-final}, \eqref{eq:scenario2-first-final} and \eqref{eq:scenario2-second-final} together, we arrive at
\begin{equation}
  \lim_{n\to\infty}\sup_{t\geq \delta} e^{ \alpha t
  }t^2
  \PP\left( \frac{T_{m,n,p}-n}{\sqrt{n}}\leq 2\sqrt{m\log p} - t
  \right)=0.
 \end{equation}
 The proof is completed.
\end{proof}

\subsubsection{Proofs of  Theorem \ref{thm:general-wig} and Remark~\ref{remark:implications}}\label{Sec_coro:smaller}

The following lemma serves the proof of Theorem \ref{thm:general-wig}. Its own proof is placed in Appendix.
\begin{lemma}\label{lemma:MDP-wigner}
Let $\tilde{W}=\tilde{W}_{m\times m}$ be as defined in \eqref{eq:dist-general-wig} with $0\leq \eta \leq 2$. Then
\begin{equation}
 	\PP(\lambda_1(\tilde{W})\geq x)
 	\leq \frac{m^{1.5}\log m}{\delta^{m}}\cdot\exp\Big\{-\frac{(x-2r\delta)^2}{2m^{-1} (\eta-2)+4}\Big\} + 2\cdot e^{-r^2/8}
 \end{equation}
for all $r\geq 4m$, $\delta\in (0,1)$ and $x>2r\delta+1$.
\end{lemma}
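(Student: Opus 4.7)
The plan is to combine a $\delta$-net discretization of the unit sphere in $\mathbb{R}^m$ with Gaussian tail bounds for the quadratic form $v^\top \tW v$ and a concentration bound for the spectral norm of $\tW$. Starting from the variational characterization $\lambda_1(\tW) = \sup_{\|u\|=1} u^\top \tW u$, one first computes the variance of $v^\top \tW v$ for any unit vector $v$. Using the independence of the upper-triangular entries,
\begin{equation*}
\mathrm{Var}(v^\top \tW v) = \eta \sum_i v_i^4 + 4 \sum_{i<j} v_i^2 v_j^2 = 2 + (\eta-2)\|v\|_4^4.
\end{equation*}
Since $\|v\|_4^4 \geq 1/m$ by Cauchy-Schwarz for any unit $v$ and $\eta\leq 2$ by hypothesis, this variance is at most $2 + (\eta-2)/m$. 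Hence the pointwise Gaussian tail
\begin{equation*}
\PP\bigl(v^\top \tW v \geq y\bigr) \leq \exp\bigl\{-y^2/(4 + 2(\eta-2)/m)\bigr\}
\end{equation*}
holds uniformly in $v$ on the unit sphere, producing exactly the denominator in the Gaussian factor of the claimed bound.

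Next, I would introduce the high-probability event $E_r := \{\|\tW\|\leq r\}$, where $\|\cdot\|$ denotes the spectral norm, together with a $\delta$-net $N_\delta$ on the unit sphere. The standard volume argument gives $|N_\delta| \leq (1 + 2/\delta)^m$, and together with the Mills-ratio correction coming from the Gaussian tail the prefactor $m^{3/2}\log m\cdot \delta^{-m}$ in the conclusion absorbs these contributions up to constants. On $E_r$, the inequality $|u^\top\tW u - v^\top \tW v| \leq 2\|u-v\|\cdot \|\tW\|$ implies that for any $u$ on the sphere there is $v \in N_\delta$ with $\|u-v\|\leq \delta$, so
\begin{equation*}
\lambda_1(\tW) \leq \max_{v \in N_\delta} v^\top \tW v + 2 r\delta.
\end{equation*}
Combining this deterministic statement with a union bound over $N_\delta$ and the pointwise Gaussian tail above yields the first summand in the stated inequality.

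It remains to handle $\PP(E_r^c)$, for which I would invoke Gaussian concentration applied to the map sending the independent upper-triangular entries of $\tW$ (with variances at most $2$) to $\|\tW\|_F$, combined with $\|\tW\|\leq \|\tW\|_F$. Since this map has Lipschitz constant of order one and $\E\|\tW\|_F \leq \sqrt{m^2 + m(\eta-1)}\leq m\sqrt{1+1/m}$, the hypothesis $r\geq 4m$ ensures that $r - \E\|\tW\|_F$ dominates the mean by a fixed multiplicative margin, so standard sub-Gaussian concentration delivers $\PP(\|\tW\|_F \geq r) \leq 2e^{-r^2/8}$. The main obstacle I anticipate is not in the structure of the argument, which is routine, but in the bookkeeping required to match the exact polynomial prefactor $m^{3/2}\log m$: aligning the cardinality of the net, the polynomial correction in the Gaussian tail, and the numerical constants in the concentration bound will take some care, but no new ideas beyond those outlined above should be needed.
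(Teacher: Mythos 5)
Your overall architecture is the same as the paper's: a $\delta$-net on the sphere, the exact variance computation $\mathrm{Var}(v^{\intercal}\tW v)=2+(\eta-2)\|v\|_4^4$ with $\inf_{\|v\|=1}\|v\|_4^4=1/m$, a union bound giving the Gaussian factor with denominator $2m^{-1}(\eta-2)+4$, and a separate tail bound for the norm of $\tW$ to control the discretization error $2r\delta$. The variance step and the net-approximation inequality are correct. However, there are two concrete places where your version does not deliver the bound as stated.

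First, the net cardinality. The volume argument gives $|N_\delta|\leq(1+2/\delta)^m=(\delta+2)^m\delta^{-m}\geq 2^m\delta^{-m}$, and the factor $(\delta+2)^m$ is exponential in $m$; it cannot be ``absorbed up to constants'' into the polynomial prefactor $m^{3/2}\log m$, and there is no Mills-ratio gain available since the pointwise tail used is just $e^{-y^2/(2\sigma^2)}$. The paper gets the stated prefactor by invoking Rogers' covering theorem, which gives $N_\delta=O(m^{1.5}(\log m)\,\delta^{-m})$ --- a genuinely sharper covering bound than the volume argument. With your net you would prove a weaker lemma with prefactor $(3/\delta)^m$ (which, incidentally, would still suffice for the application in Theorem~4, but is not the statement). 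Second, the norm tail. Gaussian Lipschitz concentration for $\|\tW\|_F$ (Lipschitz constant $\sqrt{2}$, $\E\|\tW\|_F\leq\sqrt{m(m+1)}\leq\tfrac{\sqrt{2}}{4}r$ under $r\geq 4m$) yields $\exp\{-(1-\tfrac{\sqrt2}{4})^2r^2/4\}\approx e^{-0.104\,r^2}$, which is strictly larger than $2e^{-r^2/8}=2e^{-0.125\,r^2}$ for $r\geq 8$; so the hypothesis $r\geq 4m$ does not leave you enough margin on this route. The paper instead bounds $\|\tW\|_F^2$ stochastically by $2\chi^2_{m(m+1)/2}$ and applies the chi-square large-deviation rate function $I(x)=\tfrac12(x-1-\log x)\geq x/4$ for $x\geq 8$, which produces $2e^{-r^2/8}$ exactly. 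Both gaps are about hitting the stated constants rather than about the structure of the argument, but the covering-number one in particular requires an input (Rogers' theorem) that your outline does not supply.
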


\begin{proof}[Proof of Theorem~\ref{thm:general-wig}]
For any $0<\varepsilon<1$, we first show that
\begin{equation}\label{eq:upper-general-each} \PP\left(\lambda_1(\tW_{\{1,...,m\}})\geq(1+\varepsilon)\sqrt{\{4m+2(\eta-2)\}\log p}\right)=o(p^{-m})
\end{equation}
by using Lemma \ref{lemma:MDP-wigner}. To do so, set $x=(1+\varepsilon)\sqrt{[4m+2(\eta-2)]\log p}$,\\
 $r=\sqrt{128m\log p}$ and $\delta=(8r)^{-1}\varepsilon\sqrt{[4m+2(\eta-2)]\log p}$. Rewrite $\delta$ such that
 \beaa
 \delta=\left(\frac{1}{64}\sqrt{\frac{2m+\eta-2}{m}}\right)\,\varepsilon.
 \eeaa
It is easy to check that the coefficient of $\varepsilon$ is always sitting in $[1/64, 2/64]$ for any $m\geq 2$ and $\eta\in [0, 2]$. This,  the fact that $\sup_{k\geq 2}(k^{1.5}\log k)\cdot \delta^k<\infty$ and
the definition of $r$ lead  to
 \begin{equation}\lbl{cup_victor}
 	 \frac{m^{1.5}\log m}{ \delta^{m}}
 	= O(\varepsilon^{-2m}) \text{ and } e^{-r^2/8} = o(p^{-6m}).
 \end{equation}
We can see that
\beaa
x-r\delta=\Big(1+\frac{7}{8}\varepsilon\Big)\cdot \sqrt{[4m+2(\eta-2)]\log p}.
\eeaa
It follows that
 \begin{equation}
 \begin{split}
 		&\exp\left\{-\frac{(x-2r\delta)^2}{2m^{-1} (\eta-2)+4}\right\}\\
 	\leq &\exp\left\{
-\frac{\big(1+\frac{\varepsilon}{2}\big)^2 [4m+2(\eta-2)]\log p }{2m^{-1} (\eta-2)+4}\right\}\\
 	=&
 	p^{-[1+(\varepsilon/2)]^2m}.
 \end{split}
 \end{equation}
 This and \eqref{cup_victor} implies \eqref{eq:upper-general-each}. Consequently,
 \begin{equation}
 \begin{split}
 	& \PP\left(\tilde{T}_{m,p}
 	\geq(1+\varepsilon)\sqrt{[4m+2(\eta-2)]\log p}\right)\\
 	\leq & p^m
 	\PP\left(\lambda_1(\tW_{\{1,...,m\}})\geq(1+\varepsilon)\sqrt{[4m+2(\eta-2)]\log p}\right)\to 0.
 \end{split}
 \end{equation}
To complete the proof, it is enough to check that
\begin{equation}\label{new_peral}
 	 \PP\left(\tilde{T}_{m,p}
 	< (1-\varepsilon)\sqrt{[4m+2(\eta-2)]\log p}\right)\to 0
 \end{equation}
for each $\varepsilon\in (0, 1).$
For notational simplicity, let $K_m=4m+2(\eta-2)$ and  $\tau_{m,p}=\log p/K_m.$  Similar to the proof of Theorem~\ref{thm:wigner}, define
\beaa
\tA_{S}&=& \Big\{  \tW_{ii}\geq 2(1-\varepsilon)\eta \sqrt{\tau_{m,p}},\, \tW_{ij}\geq 4(1-\varepsilon)\sqrt{\tau_{m,p}}\\
&&~~~~~~~~~~~~~~~~~~~~~~~~~\text{ for all } i,j\in S \text{ and } i\leq j\Big\}
\eeaa
for each $S\subset\{1,...,p\}$ with $|S|=m$.
We next compute $\PP(\tA_{S_0})$ and $\PP(\tA_{S_0}\cap\tA_{S_1})$, respectively, where $S_0=\{1,...,m\}$ and $S_1=\{1,...,l,m+1,...,2m-l\}$.
By independence,
\beaa
	\PP\left(\tA_{S_0}\right)
	&=&\prod_{i=1}^m \PP\left(\tW_{ii}\geq 2(1-\varepsilon)\eta\sqrt{\tau_{m,p}} \right)\cdot \\
&& ~~~~~~~~~~~\prod_{1\leq i<j\leq m} \PP\left(
 	\tW_{ij}\geq 4(1-\varepsilon)\sqrt{\tau_{m,p}}
 	\right).
\eeaa
Since $ \tW_{ii}\sim N(0,\eta)$ and $\tW_{ij}\sim N(0,1) $ for all $i\ne j$, we further have
\begin{equation}
	\PP\left(\tA_{S_0}\right)
	= \bar{\Phi}\left( 2(1-\varepsilon)\sqrt{\eta\tau_{m,p}}\,\right)^m  \bar{\Phi}\left(
 	 4(1-\varepsilon)\sqrt{\tau_{m,p}}\,
 	\right)^{\frac{m(m-1)}{2}},
\end{equation}
where $\bar{\Phi}(x)=(2\pi)^{-1/2}\int_x^{\infty}e^{-t^2/2}\,dt$ for $x\in \mathbb{R}.$
Similar to \eqref{eq:intersection},
\bea\label{eq:eta-cross-prob}
&&	\PP\left(\tilde{A}_{S_0}\cap \tilde{A}_{S_1}\right)\nonumber\\
&=& \bar{\Phi}\left(2(1-\varepsilon)\sqrt{\eta\tau_{m,p}}\right)^{2m-l}\bar{\Phi}\left(4(1-\varepsilon)\sqrt{\tau_{m,p}}\right)^{\frac{m(m-1)}{2}\cdot 2-\frac{l(l-1)}{2}}.
\eea
From \eqref{eq:tail-sdn}, $\log \bar{\Phi}(x)=-  \frac{x^2}{2}-\log(x)-\log \sqrt{2\pi}+o(1)$ as $x\to\infty$. Then,
\begin{equation}
	\log \PP(\tA_{S_0})
	= -2m (1-\varepsilon)^2 \eta \tau_{m,p}-4m(m-1)(1-\varepsilon)^2 \tau_{m,p}+ R_{m,p},
\end{equation}
where
\begin{equation}
\begin{split}
	R_{m,p}:= &- m \log\Big[
2(1-\varepsilon)\sqrt{\eta\tau_{m,p}}
	\Big]-m\log\sqrt{2\pi}\\
	&-\frac{m(m-1)}{2}\cdot \log\Big[
4(1-\varepsilon)\sqrt{\tau_{m,p}}
\Big]-\frac{m(m-1)}{2}\cdot \log\sqrt{2\pi} +o(m^2).
\end{split}
	\end{equation}
Notice
\begin{equation}
\begin{split}
		&-2m (1-\varepsilon)^2 \eta \tau_{m,p}-4m(m-1)\cdot(1-\varepsilon)^2 \tau_{m,p}\\
	= & -m(1-\varepsilon)^2 \tau_{m,p}(2\eta + 4m-4)\\
	= & -m(1-\varepsilon)^2 \tau_{m,p}K_m\\
	= & -(1-\varepsilon)^2m\log p.
\end{split}
\end{equation}
Similar to \eqref{eq:gaussian-approx}, we obtain that $R_{m,p}=O(m^2\log\log p)$. Thus,
\begin{equation}
	\log \PP\left(\tA_{S_0}\right) = -(1-\varepsilon)^2m\log p + O(m^2\log\log p).
\end{equation}
By the same argument as obtaining \eqref{eq:etq-approx}, we see
\begin{equation}\label{eq:eta-single-approx}
	\log \left[\binom{p}{m}\PP\left(\tA_{S_0}\right) \right]=
	[1-  (1-\varepsilon)^2]m\log p+ O(m^2\log\log p).
\end{equation}
In particular the above  goes to infinity as $n\to\infty$.
By \eqref{eq:tail-sdn} and \eqref{eq:eta-cross-prob},
\begin{equation}
\begin{split}
		&\log \PP\left(\tilde{A}_{S_0}\cap \tilde{A}_{S_1}\right)\\
	= &-2(2m-l)(1-\varepsilon)^2 \eta \tau_{m,p} - 8\left[m(m-1)-\frac{1}{2}l(l-1)\right](1-\varepsilon)^2 \tau_{m,p}\\
&~~~~~~~~~~~~~~~~~~~~~~~~~~~~~~~~~ + O(m^2\log\log p).
\end{split}
\end{equation}
The right hand side above without the term ``$O(m^2\log\log p)$" is identical to
\begin{equation}
	\begin{split}
		 &  -(1-\varepsilon)^2 \tau_{m,p}
		\Big[
 2(2m-l)\eta + 8m(m-1) - 4l(l-1)
		\Big]\\
		= &  -(1-\varepsilon)^2 \tau_{m,p} \Big[
		2m (2\eta + 4m-4)-4l^2 +(4-2\eta)l
		\Big]\\
		= & -(1-\varepsilon)^2 \tau_{m,p}\Big[
		2m K_m -4l^2 +(4-2\eta)l
		\Big]\\
		= & - 2(1-\varepsilon)^2 m \log p + (1-\varepsilon)^2 (\log p)\cdot K_m^{-1} \cdot(4l- 4+2\eta)l.
	\end{split}
\end{equation}
The above two assertions yield
\begin{equation}\label{eq:eta-cross-approx}
\begin{split}
		&\log \left[m p^{2m-l}\PP\left(\tilde{A}_{S_0}\cap \tilde{A}_{S_1}\right)\right]\\
	= &[2m-l- 2(1-\varepsilon)^2 m] \log p + (1-\varepsilon)^2 (\log p)\cdot K_m^{-1}\cdot l(4l- 4+2\eta)\\
&~~~~~~~~~~~~~~~~~~~~~~~~~~~~~~~~~~~+O(m^2\log\log p)\\
	= & (\log p) \left\{ 2\left[1-(1-\varepsilon)^2\right]m +\left[-1+ (1-\varepsilon)^2\cdot K_m^{-1}\cdot(4l- 4+2\eta) \right] l
	\right\}\\
&~~~~~~~~~~~~~~~~~~~~~~~~~~~~~~~~~~+O(m^2\log\log p).
\end{split}
\end{equation}
Let us take a closer look at  the above display. For $1\leq l\leq m-1$,
\begin{equation}
	K_m^{-1}(4l- 4+2\eta)
	= \frac{4(l-1)+2\eta}{4(m-1)+2\eta}
<1.
\end{equation}
Thus, for $1\leq l\leq m-1$ and $0<\varepsilon<1$,
\begin{equation}
	\left[-1+ (1-\varepsilon)^2K_m^{-1}(4l- 4+2\eta) \right] l
	\leq [-1 + (1-\varepsilon)^2]\cdot l \leq -\varepsilon.
\end{equation}
Combining this with \eqref{eq:eta-cross-approx}, we obtain that
\beaa\label{eq:eta-cross-approx1}
	&&\log \left[m p^{2m-l}\PP\left(\tilde{A}_{S_0}\cap \tilde{A}_{S_1}\right)\right]\\
	&\leq & 2\left[1-(1-\varepsilon)^2\right]m\log p-\varepsilon \log p + O(m^2\log\log p)
\eeaa
uniformly for $1\leq l \leq m-1$. Define $\tilde{Q}_p=\sum_{S\subset\{1,...,p\},|S|=m}{\mathbf{1}}_{\tilde{A}_{S}}$. From \eqref{eq:eta-single-approx},  $$\E\tilde{Q}_p=\binom{p}{m}\PP\left(\tA_{S_0}\right)\to\infty$$ as $n\to\infty$.
Moreover, we see from \eqref{eq:eta-single-approx} and \eqref{eq:eta-cross-approx} that
\beaa
&&	\Big(\E\tilde{Q}_p \Big)^{-2}\max_{1\leq l\leq m-1} m p^{2m-l}\PP\left(\tilde{A}_{S_0}\cap \tilde{A}_{S_1}\right)\\
& \leq & \exp\{-\varepsilon\log p+O(m^2\log\log p)\}\\
& \to & 0.
\eeaa
By Lemma \ref{lemma:combine} and a similar argument to \eqref{eq:var-q-split}, we get
\begin{equation}
  \begin{split}
  Var(\tQ_{p})
  \leq   \E\tQ_{p}+  \max_{1\leq l \leq m-1}mp^{2m-l} \PP\left(\tilde{A}_{S_0}\cap \tilde{A}_{S_1}\right).
\end{split}
\end{equation}
This and the above two limits imply
$\frac{Var(\tilde{Q}_p)}{(\E\tilde{Q}_p)^2}\to 0$. As a result, $\lim_{p\to\infty}\PP(\tilde{Q}_p=0)=0$ by \eqref{sun_set_flower}.
According to \eqref{eq:lb-spec}, if there exists $S_0\subset\{1,...,p\}$ such that $|S_0|=m$ and $\tilde{A}_{S_0}$ occurs, then
\begin{equation}
\begin{split}
	\tilde{T}_{m,p}\geq & \lambda_1(\tilde{W}_{S_0})\\
	\geq &
\frac{1}{m}\Big(
2m(1-\varepsilon)\eta\sqrt{\tau_{m,p}}+ 4m(m-1)(1-\varepsilon)\sqrt{\tau_{m,p}}
\Big)\\
=&(1-\varepsilon)\sqrt{[4m+2(\eta-2)]\log p}.
\end{split}
\end{equation}
Therefore,
\begin{equation}
\PP\left(\tilde{T}_{m,p}
 	< (1-\varepsilon)\sqrt{\{4m+2(\eta-2\}]\log p}\right)\leq
  \PP\left(\tQ_{p}=0\right)\to 0.
\end{equation}
This implies \eqref{new_peral}. The proof is finished.

\end{proof}

\begin{proof}[Proof of Remark~\ref{remark:implications}]
	These results are direct consequences of the following lemma, whose proof is given in Appendix~\ref{fish_dead}.
\end{proof}
	\begin{lemma}\label{lemma:convergence-concepts}
	Let $\{Z_p\}_{p\geq 1}$ be a sequence of non-negative random variables. Consider the following statements.
	\begin{itemize}
		\item [(i)]
			$
			\lim\limits_{p\to\infty}\E\left[
   e^{\alpha Z_p}\mathbf{1}_{\{Z_p\geq \delta\} }
  \right]=0
			$ for all $\alpha>0$ and $\delta>0$.
			\item [(ii)]  $
			\lim\limits_{p\to\infty}\E(
   e^{\alpha Z_p}  )=1
			$ for all $\alpha>0$.
			\item [(iii)]  $
			\lim\limits_{p\to\infty}\E(
   Z_p^{\alpha}  )=0
			$ for all $\alpha>0$.
			\item [(iv)]  $
			\lim\limits_{p\to\infty}\PP(Z_p\geq \delta)=0
			$ for all $\delta>0$.
			\item [(v)]  $
			\lim\limits_{p\to\infty}{\rm Var}(Z_p)=0
			$
for all $\alpha>0$.
	\end{itemize}
	Then, (i)$\Longleftrightarrow$ (ii) $\implies$ (iii)  $\implies$ (iv) and (v).
	Here, ``A $\Longleftrightarrow$ B'' means two statements A and B are equivalent, and A $\implies$ B means statement A implies statement B.
\end{lemma}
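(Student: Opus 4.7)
\textbf{Proof proposal for Lemma \ref{lemma:convergence-concepts}.} The plan is to prove the chain of implications in the order (ii) $\Rightarrow$ (i), (i) $\Rightarrow$ (ii), (i) (equivalently (ii)) $\Rightarrow$ (iii), and finally (iii) $\Rightarrow$ (iv), (v), treating each as a short analytic lemma whose main tool is a single inequality. The only step that requires a little care is the equivalence (i) $\Leftrightarrow$ (ii); everything else is essentially a one-line estimate.

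First I would handle (i) $\Rightarrow$ (ii). Fix $\alpha>0$ and $\delta>0$, and split
\beaa
\E[e^{\alpha Z_p}] = \E[e^{\alpha Z_p}\mathbf{1}_{\{Z_p<\delta\}}] + \E[e^{\alpha Z_p}\mathbf{1}_{\{Z_p\geq\delta\}}]\leq e^{\alpha\delta} + \E[e^{\alpha Z_p}\mathbf{1}_{\{Z_p\geq\delta\}}].
\eeaa
By (i), the second term vanishes, giving $\limsup_p \E[e^{\alpha Z_p}]\leq e^{\alpha\delta}$; letting $\delta\downarrow 0$ shows $\limsup_p \E[e^{\alpha Z_p}]\leq 1$. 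On the other hand $e^{\alpha Z_p}\geq 1$ since $Z_p\geq 0$, so $\E[e^{\alpha Z_p}]\geq 1$, and (ii) follows.

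For the reverse (ii) $\Rightarrow$ (i), I would first use Markov's inequality $\PP(Z_p\geq \delta)\leq e^{-\alpha\delta}\E[e^{\alpha Z_p}]$ together with (ii) and letting $\alpha\to\infty$ to deduce that $\PP(Z_p\geq \delta)\to 0$ for every $\delta>0$. Then, by Cauchy--Schwarz,
\beaa
\E[e^{\alpha Z_p}\mathbf{1}_{\{Z_p\geq\delta\}}]\leq \bigl(\E[e^{2\alpha Z_p}]\bigr)^{1/2}\bigl(\PP(Z_p\geq \delta)\bigr)^{1/2}.
\eeaa
Applying (ii) with $2\alpha$ in place of $\alpha$ bounds the first factor by a quantity tending to $1$, while the second factor tends to $0$; this yields (i).

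The implication (ii) $\Rightarrow$ (iii) is the following simple observation: for any $\alpha>0$ the elementary bound $x^\alpha\leq (\alpha/e)^\alpha e^{x}$ (valid for $x\geq 0$) gives
\beaa
\E[Z_p^\alpha \mathbf{1}_{\{Z_p\geq\delta\}}]\leq (\alpha/e)^\alpha\,\E[e^{Z_p}\mathbf{1}_{\{Z_p\geq\delta\}}],
\eeaa
which tends to $0$ by (i). Since $\E[Z_p^\alpha\mathbf{1}_{\{Z_p<\delta\}}]\leq \delta^\alpha$, letting $\delta\downarrow 0$ finishes (iii). Finally (iii) $\Rightarrow$ (iv) is Markov's inequality $\PP(Z_p\geq \delta)\leq \delta^{-1}\E[Z_p]\to 0$, and (iii) $\Rightarrow$ (v) is $\mathrm{Var}(Z_p)\leq \E[Z_p^2]\to 0$. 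There is no real obstacle; the only subtlety is remembering to use Cauchy--Schwarz (rather than attempting a direct estimate) in the (ii) $\Rightarrow$ (i) step, since one needs both a tail probability and a uniform control on the moment generating function to kill the indicator.
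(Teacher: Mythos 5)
Your proposal is correct and follows the same overall architecture as the paper (the same chain of implications, with the same easy steps for (i)$\Rightarrow$(ii), (iii)$\Rightarrow$(iv) and (iii)$\Rightarrow$(v)), but two of the local estimates are carried out differently. For (ii)$\Rightarrow$(i), the paper uses a single exponential-tilting bound, $\mathbf{1}_{\{Z_p\geq\delta\}}\leq e^{\beta(Z_p-\delta)}$, so that $\E[e^{\alpha Z_p}\mathbf{1}_{\{Z_p\geq\delta\}}]\leq e^{-\beta\delta}\E[e^{(\alpha+\beta)Z_p}]$, and then sends $\beta\to\infty$ after taking the limsup in $p$; you instead derive the tail bound $\PP(Z_p\geq\delta)\to 0$ from Markov and then combine it with $\E[e^{2\alpha Z_p}]$ via Cauchy--Schwarz. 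Both are valid; the paper's route is a one-liner needing only (ii) at the single parameter $\alpha+\beta$, while yours is a more modular "uniform integrability plus vanishing tail" argument at the cost of an extra application of (ii) at $2\alpha$. For (ii)$\Rightarrow$(iii), the paper uses the layer-cake identity $\E[Z_p^\alpha]=\alpha\int_0^\infty x^{\alpha-1}\PP(Z_p\geq x)\,dx$ together with $\PP(Z_p\geq x)\leq e^{-\beta x}\E[e^{\beta Z_p}]$, giving the explicit bound $\alpha\Gamma(\alpha)\beta^{-\alpha}\E[e^{\beta Z_p}]$ and then $\beta\to\infty$; you use the pointwise domination $x^\alpha\leq(\alpha/e)^\alpha e^{x}$ and split by the indicator, which reduces the claim directly to (i) and is arguably more elementary. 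All steps check out (including the order of limits in your Markov argument and the finiteness of $\E[e^{2\alpha Z_p}]$ for large $p$, which (ii) guarantees), so there is no gap.
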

\section*{Acknowledgment}
The research of Tony Cai was supported in part by NSF Grant DMS-1712735 and NIH grants R01-GM129781 and R01-GM123056.
Tiefeng Jiang is partially supported by NSF Grant DMS-1406279. Xiaoou Li is partially supported by NSF Grant DMS-1712657.

\bibliographystyle{apalike}
\bibliography{RadomMatrix.bib}


\newpage

\appendix
\begin{center}
	{\Large \bf Appendix}
\end{center}
\label{sec:lemmaproof}
\numberwithin{lemma}{section}

There are two sections in this part. In Appendix \ref{bu_cucumber} we derive some results on Gamma functions, which will be used later on. The material in this part is independent of previous sections. In Appendix \ref{fish_dead} we will prove the lemmas appeared in earlier sections.

\section{Auxiliary results on Gamma functions}\label{bu_cucumber}
Recall the Gamma function $\Gamma(x)=\int_0^{\infty}t^{x-1}e^{-t}\,dt$ for $x>0.$
\begin{lemma}\label{lemma:wigner-eigen-const-approx} Let
\beaa
& & c_{m,n}=m!2^{-nm/2}\prod_{j=1}^m\frac{\Gamma(3/2)}{\Gamma(1+j/2)\Gamma((n-m+j)/2 )};\\
& & C(m, n)=n^{m/2}e^{-nm/2}n^{m(n-m+1)/2-m}n^{m(m-1)/4}c_{m,n};\\
& & c_m= m!2^{-m}2^{-m(m-1)/4}\pi^{-m/2}\prod_{j=1}^m\frac{\Gamma(3/2)}{\Gamma(1+j/2)}.
\eeaa
If $m^3=o(n)$, then $\log C({n,m})-\log c_m=o(1)$ as $n\to\infty$.
\end{lemma}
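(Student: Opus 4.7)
The plan is to show $\log C(m,n) - \log c_m = o(1)$ by writing out the ratio explicitly, eliminating the common factors, and then applying Stirling's expansion to the product of Gamma functions $\prod_{j=1}^m \Gamma((n-m+j)/2)$ under the assumption $m^3 = o(n)$.

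First I would form the ratio
\[
\frac{C(m,n)}{c_m} = n^{m/2 + m(n-m+1)/2 - m + m(m-1)/4}\, e^{-nm/2}\, 2^{-nm/2 + m + m(m-1)/4}\, \pi^{m/2} \prod_{j=1}^m \frac{1}{\Gamma((n-m+j)/2)},
\]
since $m!$ and $\prod_j \Gamma(3/2)/\Gamma(1+j/2)$ appear in both $C(m,n)$ and $c_m$ and cancel. A direct simplification gives the exponent of $n$ as $mn/2 - m^2/4 - m/4$. So the next step is to replace $\prod_j \Gamma((n-m+j)/2)$ by its Stirling approximation.

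Setting $x_j = (n-m+j)/2$, I would use $\log \Gamma(x) = (x-\tfrac12)\log x - x + \tfrac12\log(2\pi) + O(1/x)$, valid as $x \to \infty$, and note that each $x_j \ge (n-m+1)/2 \to \infty$ under $m = o(n^{1/3})$. The computations split into three pieces: (i) $\sum_j x_j = mn/2 - m^2/4 + m/4$, giving $\prod_j e^{-x_j}$; (ii) the constant $(m/2)\log(2\pi)$; (iii) the main term $\sum_j (x_j - 1/2) \log x_j$, which requires the expansion
\[
\log x_j = \log(n/2) + \frac{j-m}{n} + O\!\left(\frac{m^2}{n^2}\right).
\]
Multiplying by $x_j - 1/2 = (n-m+j-1)/2$ and summing $j = 1,\ldots,m$, the leading piece yields $(mn/2 - m^2/4 - m/4)\log(n/2)$, the next order yields $-m(m-1)/4$ plus an $O(m^3/n)$ remainder, and the quadratic correction contributes $O(m^3/n)$ as well. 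Thus
\[
\log\prod_{j=1}^m \Gamma(x_j) = \tfrac{m}{2}\log(2\pi) + \Bigl(\tfrac{mn}{2} - \tfrac{m^2}{4} - \tfrac{m}{4}\Bigr)\log(n/2) - \tfrac{m(m-1)}{4} - \tfrac{mn}{2} + \tfrac{m^2}{4} - \tfrac{m}{4} + o(1),
\]
with all the error terms controlled by $m^3/n = o(1)$.

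Finally I would substitute this into $\log C(m,n) - \log c_m$ and verify that the coefficient of $\log n$ vanishes (both sides give $mn/2 - m^2/4 - m/4$), that the $nm/2$ terms cancel, that the coefficient of $\log 2$ vanishes (a short arithmetic check on $-(nm/2 - m - m(m-1)/4) + (mn/2 - m^2/4 - m/4) - m/2 = 0$), and that the remaining constant terms $m(m-1)/4 - m^2/4 + m/4$ also cancel to $0$. What survives is only the $o(1)$ error from Stirling, finishing the proof. The main obstacle is purely bookkeeping: keeping the two $\log n$, $\log 2$, and $\log \pi$ coefficients aligned across three algebraic groups while ensuring that every neglected remainder is dominated by $m^3/n$, which is $o(1)$ by hypothesis; there is no analytic difficulty beyond the standard Stirling expansion.
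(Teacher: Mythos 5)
Your proof is correct and follows essentially the same route as the paper: form the ratio $C(m,n)/c_m$, cancel the common factors $m!$ and $\prod_j\Gamma(3/2)/\Gamma(1+j/2)$, and expand $\log\prod_{j=1}^m\Gamma((n-m+j)/2)$ asymptotically, checking that the coefficients of $\log n$, $\log 2$, $\log \pi$, and the constant terms all cancel, with every remainder of order $O(m^3/n)=o(1)$. The only difference is organizational: the paper factors out $\Gamma(n/2)^m$ and controls the ratios $\Gamma((n-j)/2)/\Gamma(n/2)$ via a Gamma-ratio lemma of Jiang and Qi, whereas you apply the uniform Stirling expansion directly to each factor and expand $\log x_j$ about $\log(n/2)$; both yield the same bookkeeping and the same conclusion.
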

\begin{proof}[Proof of Lemma~\ref{lemma:wigner-eigen-const-approx}] 
Easily,
\beaa
 \frac{C({m,n})}{ c_m} &= & \frac{n^{m/2}e^{-nm/2}n^{m(n-m+1)/2-m}n^{m(m-1)/4}\cdot 2^{-nm/2}}{2^{-m}2^{-m(m-1)/4}\pi^{-m/2}\cdot\prod_{j=1}^m\Gamma((n-m+j)/2 )}\\
 &= & n^{m(2n-m-1)/4}e^{-mn/2}2^{(m-2n+3)m/4}\pi^{m/2}e^{-J_n}
\eeaa
where
\begin{eqnarray}\label{Xia}
J_n:&=&\log \prod_{j=0}^{m-1}\Gamma((n-j)/2)\nonumber\\
& = &  m\log \Gamma\Big(\frac{n}{2}\Big) + \log \prod_{j=0}^{m-1}\frac{\Gamma((n-j)/2)}{\Gamma(n/2)}.
\end{eqnarray}
Then
\begin{eqnarray}\label{T_o}
& & \log \frac{C({m,n})}{ c_m}\nonumber\\
&=&\frac{m}{4}\big(2n-m-1\big)\log n-\frac{1}{2}mn+\frac{m}{4}(m-2n+3)\log 2+\frac{m}{2}\log \pi - J_n\nonumber\\
& = &  \frac{m}{4}\big(2n-m-1\big)\log \frac{n}{2}-\frac{1}{2}mn +\frac{m}{2}\log (2\pi)-J_n.
\end{eqnarray}
Write
\beaa
\log \frac{\Gamma(x+b)}{\Gamma(x)}=(x+b)\log (x+b)-x\log x-b + \delta(x,b).
\eeaa
 By Lemma 5.1 from Jiang and Qi (2015), there exists a constant $C>0$ free of $x$ and $b$ such that
$$|\delta(x,b)|\leq C\cdot\frac{b^2+|b|x+1}{x^2}$$
for all $x\geq 10$ and $|b|\leq x/2$. It is easy to see that
\beaa
\sum_{j=0}^{m-1}\Big|\delta\Big(\frac{n}{2}, -\frac{j}{2}\Big)\Big| \leq C'\cdot  \frac{m^3+nm^2+m}{n^2}
\eeaa
where $C'$ is a constant free of $m$ and $n$.  This implies that
\beaa
\log \prod_{j=0}^{m-1}\frac{\Gamma((n-j)/2)}{\Gamma(n/2)}
=O\Big(\frac{m^2}{n}\Big)+\sum_{j=0}^{m-1}\Big(\frac{n-j}{2}\log \frac{n-j}{2}-\frac{n}{2}\log \frac{n}{2}+\frac{j}{2}\Big)
\eeaa
as $n\to\infty$. Write
\beaa
\frac{n-j}{2}\log \frac{n-j}{2}-\frac{n}{2}\log \frac{n}{2}=\frac{n}{2}\log \Big(1-\frac{j}{n}\Big) -\frac{j}{2}\log \frac{n}{2}-\frac{j}{2}\log \Big(1-\frac{j}{n}\Big).
\eeaa
 Easily, $\log (1-\frac{j}{n})=-\frac{j}{n}+O(\frac{m^2}{n^2})$ as $n\to\infty$ uniformly for all $1\leq j \leq m.$ Hence,
\beaa
& & \sum_{j=0}^{m-1}\Big(\frac{n-j}{2}\log \frac{n-j}{2}-\frac{n}{2}\log \frac{n}{2}+\frac{j}{2}\Big)\\
& = & O\Big(\frac{m^2}{n}\Big)+\sum_{j=0}^{m-1}\Big(-\frac{j}{2}-\frac{j}{2}\log \frac{n}{2}+\frac{j^2}{2n}+\frac{j}{2}\Big)\\
&= & O\Big(\frac{m^2}{n}\Big)-\frac{1}{4}m(m-1)\log \frac{n}{2}+\frac{(m-1)m(2m-1)}{12n}\\
& = & -\frac{1}{4}m(m-1)\log \frac{n}{2}+O\Big(\frac{m^3}{n}\Big)
\eeaa
as $n\to\infty$. In summary,
\beaa
\log \prod_{j=0}^{m-1}\frac{\Gamma((n-j)/2)}{\Gamma(n/2)}
=-\frac{1}{4}m(m-1)\log \frac{n}{2}+O\Big(\frac{m^3}{n}\Big)
\eeaa
as $n\to\infty.$ On the other hand, by the Stirling formula,
\beaa
m\log \Gamma\Big(\frac{n}{2}\Big)=\frac{(n-1)m}{2}\log \frac{n}{2}-\frac{1}{2}mn+\frac{m}{2}\log (2\pi) + O\Big(\frac{m}{n}\Big)
\eeaa
as $n\to\infty.$ From (\ref{Xia}) and the above two assertions we see
\bea\label{cool_flower}
J_n=\frac{m}{4}\big(2n-m-1\big)\log \frac{n}{2}-\frac{1}{2}mn +\frac{m}{2}\log (2\pi) + o(1)
\eea
as $n\to\infty$, which together with (\ref{T_o}) proves the lemma.
\end{proof}

\begin{lemma}\label{lemma:AB} Let
\beaa
& & \Gamma_m\Big(\frac{n}{2}\Big)= \pi^{m(m-1)/4}\prod_{j=1}^m \Gamma\Big(\frac{n-j+1}{2}\Big);\\
& & A(m,n)=n^{m(m+1)/4 +m(n-m-1)/2}e^{-nm/2}2^{-\frac{nm}{2}}/\Gamma_m(n/2);\\
& & B(m)= (2\pi)^{-m(m+1)/4}2^{-m/2}.
\eeaa
If $m^3=o(n)$, then $\log A(m,n)-\log B(m)\to 0$ as $n\to\infty$.
\end{lemma}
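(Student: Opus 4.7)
The plan is to reduce Lemma~\ref{lemma:AB} to the Gamma-function expansion already worked out in the proof of Lemma~\ref{lemma:wigner-eigen-const-approx}. First I would observe that
\[
\sum_{j=1}^{m}\log\Gamma\!\left(\frac{n-j+1}{2}\right)=\sum_{k=0}^{m-1}\log\Gamma\!\left(\frac{n-k}{2}\right)=J_n,
\]
so that $\log\Gamma_m(n/2)=\tfrac{m(m-1)}{4}\log\pi+J_n$. In the previous lemma the identity
\[
J_n=\frac{m}{4}(2n-m-1)\log\frac{n}{2}-\frac{1}{2}mn+\frac{m}{2}\log(2\pi)+o(1)
\]
was established (the displayed line \eqref{cool_flower}) under the assumption $m^3=o(n)$, via Stirling's formula applied to $\Gamma(n/2)$ and a careful control of the remainder $\delta(x,b)$ in the expansion of $\log\Gamma(x+b)/\Gamma(x)$. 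I would simply cite that line; no new analytic input is required.

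Next I would substitute into
\[
\log A(m,n)=\Big[\tfrac{m(m+1)}{4}+\tfrac{m(n-m-1)}{2}\Big]\log n-\tfrac{nm}{2}-\tfrac{nm}{2}\log 2-\tfrac{m(m-1)}{4}\log\pi-J_n
\]
and collect terms according to their $n$-dependence. The $\log n$ coefficient cancels identically: $\tfrac{m(m+1)}{4}+\tfrac{m(n-m-1)}{2}-\tfrac{m(2n-m-1)}{4}=0$. The purely linear-in-$n$ terms cancel: $-\tfrac{nm}{2}+\tfrac{1}{2}mn=0$. The $\log 2$ terms combine to $-\tfrac{m(m+1)}{4}\log 2-\tfrac{m}{2}\log 2$ after merging $-\tfrac{nm}{2}\log 2$ with the $\log 2$ piece extracted from $\tfrac{m}{4}(2n-m-1)\log\tfrac{n}{2}$. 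Finally the $\log\pi$ terms sum to $-\tfrac{m(m+1)}{4}\log\pi$.

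Putting the pieces together gives
\[
\log A(m,n)=-\tfrac{m(m+1)}{4}\log(2\pi)-\tfrac{m}{2}\log 2+o(1),
\]
which is exactly $\log B(m)+o(1)$. The only step that requires care is the arithmetic of matching coefficients; the analytic content, namely the control of $J_n$ up to $o(1)$ under $m^3=o(n)$, is already in hand from the earlier lemma. There is no real obstacle here, and the argument amounts to careful algebraic bookkeeping once \eqref{cool_flower} is invoked.
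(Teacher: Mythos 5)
Your proposal is correct and follows essentially the same route as the paper: both reduce the claim to the expansion of $J_n=\log\prod_{j=0}^{m-1}\Gamma((n-j)/2)$ already established in the proof of Lemma~\ref{lemma:wigner-eigen-const-approx} (the display \eqref{cool_flower}), and then verify by direct bookkeeping that the $\log n$, linear-in-$n$, $\log 2$ and $\log\pi$ coefficients match those of $\log B(m)$. The algebra in your collection of terms checks out, so nothing further is needed.
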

\begin{proof}[Proof of Lemma~\ref{lemma:AB}] Observe
\bea\lbl{Great_moon}
& &  \log \frac{A(m,n)}{B(m)}\nonumber\\
&=& \Big[\frac{1}{4}m(m+1)+\frac{1}{2}m(n-m-1)\Big]\log n -\frac{1}{2}mn\nonumber\\
& & -\frac{1}{2}m(n-1)\log 2+\frac{1}{4}m(m+1)\log (2\pi)-\log \Gamma_m\Big(\frac{n}{2}\Big).
\eea
By definition,
\beaa
\Gamma_m\Big(\frac{n}{2}\Big)
=\pi^{m(m-1)/4}\prod_{j=0}^{m-1} \Gamma\Big(\frac{n-j}{2}\Big).
\eeaa
From (\ref{Xia}) and (\ref{cool_flower}), we see that
\beaa
& & \log \Gamma_m\Big(\frac{n}{2}\Big)\\
&=&\frac{1}{4}m(m-1)\log \pi +\frac{m}{4}\big(2n-m-1\big)\log \frac{n}{2}-\frac{1}{2}mn +\frac{m}{2}\log (2\pi) + o(1)\\
& = & \frac{m}{4}\big(2n-m-1\big)\log n -\frac{1}{2}mn-\frac{1}{2}m(n-1)\log 2+\frac{1}{4}m(m+1)\log (2\pi)+o(1).
\eeaa
By comparing this identity with (\ref{Great_moon}), we conclude $\log \frac{A(m,n)}{B(m)}\to 0$.
\end{proof}

\section{Proofs of lemmas}\label{fish_dead}

The following result is based on a slight modification of the second inequality of (4.8) from \cite{Jiang:2015ir} and a care taken by noticing that the version of the Wigner matrix here is $\sqrt{2}$ times of the version there. It will enable us to bound the last probability.
\begin{proof}[Proof of Lemma~\ref{eq:wig-margin-tail}] Review the proof of Lemma 4.1 from \cite{Jiang:2015ir}. Notice  that the version of the Wigner matrix here is $\sqrt{2}$ times of the version there. From the second inequality in (4.8) in the paper, there is a positive constant $C$ not depending on $m$ such that
\beaa
&&	P\Big(\lambda_1(\tW_{\{1,...,m\}})\geq x \text{ or } \lambda_{m}(\tW_{\{1,...,m\}})\leq -x\Big)\\
&	\leq & C\cdot \exp\Big\{-\frac{x^2}{4} + C m\log x +Cm \Big\}
\eeaa
for all $x>4\sqrt{m}$ and all $m\geq 2$. Since the right hand side above is increasing in $C$, without loss of generality, we assume $C>1$. It is easy to see $\log C\leq Cm \leq C m \log x$ under the assumption that $x>4\sqrt{m}$. By taking $\kappa=3C$ we get the desired conclusion.
\end{proof}

\begin{proof}[Proof of Lemma~\ref{lemma:p-choose-m}]
Note that
	\begin{equation}
		\binom{p}{m}= \frac{p\cdot(p-1)\cdot...\cdot (p-m+1)}{m!}
	\end{equation}
	and $\frac{p-l}{m-l}\geq \frac{p}{m}$ for $l\geq 0$. Thus,
	\begin{equation}
		\binom{p}{m}\geq \Big(\frac{p}{m}\Big)^m.
	\end{equation}	
	On the other hand, by the Sterling formula,
	\begin{equation}
		m!\geq \sqrt{2\pi}m^{m+{1/2}}e^{-m}> m^m e^{-m}.
	\end{equation}
	Therefore,
	\begin{equation}
		\binom{p}{m}\leq\frac{p^m}{m!}<\frac{p^m}{m^me^{-m}}.
	\end{equation}
	Combining the two inequalities, we complete the proof.
\end{proof}

\begin{proof}[Proof of Lemma~\ref{lemma:max-l}] Write
\begin{eqnarray*}
g(x):&=&(2m-x)-\frac{2m^2-x^2}{m}(1-\varepsilon)^2\\
& = & 2m\big[1-(1-\varepsilon)^2\big]+\frac{(1-\varepsilon)^2}{m}x^2-x
\end{eqnarray*}
for $1\leq x\leq m-1.$ Obviously, $g(x)$ is a convex function. This leads to that  $\max_{1\leq l \leq m-1}g(l)=g(1)\vee g(m-1).$ It is trivial to check that $g(1)\geq g(m-1)$. The first identity is thus obtained. The second identity follows from the first one.
%
\end{proof}

\begin{proof}[Proof of Lemma~\ref{lemma:combine}]
By rearranging the terms, we have
\begin{equation}
\begin{split}
		\frac{(p-m)!^2}{p!(p-2m)!}
		=&\frac{(p-m)\cdots (p-2m+1)}{p\cdots (p-m+1)}\\
		=&\frac{p-m}{p}\cdot\frac{p-m-1}{p-1}\cdots \frac{p-2m+1}{p-m+1}\\
		<& 1.
\end{split}
\end{equation}

\end{proof}

\begin{proof}[Proof of Lemma~\ref{lemma:integral}] Note that
	\begin{equation}
		\E\big(e^{\alpha Z}\mathbf{1}_{\{Z\geq \delta\}}\big)
		= \E\Big[\int_{-\infty}^{Z}\alpha e^{\alpha t} dt\cdot \mathbf{1}\{Z\geq \delta\}\Big]= \E\Big[\int_{-\infty}^{\infty}\alpha e^{\alpha t} dt\cdot \mathbf{1}{\{Z\geq \delta\vee t\}}\Big]dt.
	\end{equation}
	Use the Fubini Theorem to see
	\begin{equation}
	\begin{split}
		\E(e^{\alpha Z}\mathbf{1}_{\{Z\geq \delta\}})
		= &\int_{-\infty}^{\infty} \alpha e^{\alpha t} \PP(Z\geq \delta\vee t)dt\\
		 = & \int_{-\infty}^{\delta}\alpha e^{\alpha t} dt \PP(Z\geq \delta) + \alpha \int_{\delta}^{\infty}e^{\alpha t}\PP(Z\geq t)dt\\
		 =& e^{\alpha \delta} \PP(Z\geq \delta) + \alpha \int_{\delta}^{\infty}e^{\alpha t}\PP(Z\geq t)dt.
	\end{split}
	\end{equation}
\end{proof}

\begin{proof}[Proof of Lemma~\ref{lemma:moderate-bound}]
	The technique to be used here is similar to that from \cite{Fey:2008go}, where the large deviations  for the extreme eigenvalues of Wishart matrices are developed. Thus we will omit the repetitive details and only state the main steps.
	To ease notation, we write $U_n=\frac{1}{n}W_{\{1,...,m\}}$, and we use $\lambda_{m}(U_n)$ to denote its smallest eigenvalue. The event $\big\{\frac{\lambda_1(W_{\{1,...,m\}})-n}{\sqrt{n}}\geq \sqrt{n}y\big\}$ is equal to $\{\lambda_{1}(U_n)\geq 1+y\}$ and
$\big\{\frac{\lambda_m(W_{\{1,...,m\}})-n}{\sqrt{n}}\leq- \sqrt{n}y\big\}$ is equal to
	$ \{\lambda_{m}(U_n)\leq 1-y\}$.
	We start to bound $\PP(\lambda_{m}(U_n)\leq 1-y)$. Since $\lambda_m(W_{\{1,...,m\}})\geq 0$, we assume $y \in (0,1)$ without loss of generality.

Note that $\lambda_{m}(U_n)=\min_{v:\|v\|=1}v^{\intercal}U_nv$ and
 the sphere $S^{m-1}=\{v\in \mathbb{R}^m:\, \|v\|=1\}$ can be covered by $\cup B(v^{(i)},d)$ for some $v^{(1)},...,v^{(N_d)}\in S^{m-1}$. Here, we use $B(v,d)$ to denote an open ball centered around $v$ with radius $d$.
It is straightforward to verify that for any $v \in S^{m-1}$, there always exists $j\in \{1,...,N_d\}$ such that
\begin{equation}\label{Wangjing}
	|v^{\intercal} U_nv-v^{(j)\intercal}U_nv^{(j)}	|\leq 2 \lambda_{1}(U_n)d.
\end{equation}
Therefore,  by considering $\{\lambda_{1}(U_n)\geq rm\}$ occurs or not, we have
\begin{eqnarray}\label{eq:lambda-min-bound}
& & \PP(\lambda_{m}(U_n)\leq 1-y) \nonumber\\
&=&\PP(\min_{v:\|v\|=1}v^{\intercal}U_n v\leq 1- y) \nonumber\\
&\leq &N_d\cdot \sup_{v:\|v\|=1}\PP(v^{\intercal}U_nv\leq 1-y+2d mr)+\PP(\lambda_{1}(U_n)\geq mr)
\end{eqnarray}
for all $r>0$. We next analyze $N_d$, $\PP(v^{\intercal}U_nv\leq 1-y+2dmr)$ and $\PP(\lambda_{1}(U_n)\geq mr)$ separately.

We start with $N_d$, which is the minimum number of balls with the radius $d$ required to cover $S^{m-1}$.
By a result from \cite{Rogers:1963tt} we see
\begin{equation}\label{eq:nd-uniform}
	N_d=O\big(m^{1.5}(\log m) d^{-m}\big)
\end{equation} for all $0<d<1/2$ and $m\geq 1$.  As a result,
\begin{equation}\label{eq:covering-number}
	\log N_d
=O\Big(m\log\frac{1}{d}\Big),\ \ 0<d<\frac{1}{2}.
\end{equation}

We proceed to an upper bound for $\PP(v^{\intercal}U_nv\leq 1-y+2dmr)$.
Recall that $U_n=\frac{1}{n}X_{\cdot,[1,..,m]}^{\intercal}X_{\cdot,[1,..,m]}$, where we use the notation
\begin{equation}
	X_{\cdot,[1,..,m]}= (x_{ij})_{1\leq i\leq n, 1\leq j\leq m}.
\end{equation}
Thus,
\bea\lbl{morning_sun}
	v^{\intercal}U_nv=\frac{1}{n}\|X_{\cdot,[1,..,m]}v \|^2
	=\frac{1}{n}\sum_{i=1}^n S_{v,i}^2,
\eea
where we define $S_{v,i}=\sum_{l=1}^m X_{il}v_{l}$. Review $\|v\|=1$. Since  $x_{ij}$'s are standard normals, so are $\{S_{v,i};\, 1\leq i\leq n\}$. By the large deviation bound for the sum of i.i.d. random variables [see, e.g., page 27 from Dembo and Zeitouni (1998)],
\bea\lbl{tea_coffee}
\PP\Big(\frac{1}{n}\sum_{i=1}^n S_{v,i}^2\in A\Big) \leq 2\cdot \exp\big\{-n\inf_{x\in A}I(x)\big\}
\eea
where $A\subset \mathbb{R}$ is any Borel set and $I(x)=\sup_{t\in \mathbb{R}}\{t x-\log \E e^{t N(0,1)^2} \}$. Since $\log \E(e^{tN(0,1)^2})=-\frac{1}{2}\log(1-2t)$ for $t<1/2,$
it is easy to check that
\beaa
I(x)=\begin{cases}
\frac{1}{2}(x-1-\log x), & \text{if $x>0$;}\\
\infty, & \text{if $x\leq 0$.}
\end{cases}
\eeaa
Observe that $I(x)$ is decreasing for $x\leq 1$. This together with (\ref{morning_sun}) and (\ref{tea_coffee}) implies that
\begin{equation}\label{eq:ldp-part}
\PP\left(v^{\intercal}U_nv\leq 1-y+2dmr\right)\leq e^{-n I(1-y +2dmr) }
\end{equation}
for all $y>2dmr.$

Now we estimate $\PP(\lambda_{1}(U_n)\geq r)$ appeared in (\ref{eq:lambda-min-bound}).
Noting that $U_n$ is semi-positive definite, we have
$\lambda_{1}(U_n)\leq \text{trace}(U_n)\leq
\frac{1}{n}\sum_{i=1}^n\sum_{l=1}^m x_{il}^2$, and hence
\begin{equation}\label{eq:max-tail}
	\PP\left(
\lambda_{1}(U_n)\geq mr
	\right)
	\leq \PP\left(
\frac{1}{mn}\sum_{i=1}^n\sum_{l=1}^m x_{il}^2\geq r \right)\leq
e^{-mn I(r)}
\end{equation}
for $r\geq 1$ by (\ref{tea_coffee}).
Combining \eqref{eq:covering-number}, \eqref{eq:ldp-part}, and \eqref{eq:max-tail}, we obtain from (\ref{eq:lambda-min-bound}) that
\bea\lbl{cell_511}
	& & \PP(\lambda_{m}(U_n)\leq 1-y)\nonumber\\
	&\leq & \exp\Big\{
	-n I(1-y + 2dmr)  + O\Big(m\log\frac{1}{d}\Big)
	\Big\}+\exp\{
	-mn I(r)
	\}.
\eea
for $y>2dmr$ and $r\geq 1$. This confirms (\ref{mid_range_2}).

To get  (\ref{mid_range}), just notice $\lambda_{1}(U_n)=\max_{v:\|v\|=1}v^{\intercal}U_nv$. From (\ref{Wangjing}) and (\ref{eq:lambda-min-bound}) we see that
\begin{eqnarray*}\label{eq:lambda-min-bound_hi}
& & \PP(\lambda_{1}(U_n)\geq 1+y)\\
&\leq &N_d\cdot \sup_{v:\|v\|=1}\PP(v^{\intercal}U_nv\geq 1+y-2dmr )+\PP(\lambda_{1}(U_n)\geq mr).
\end{eqnarray*}
Then (\ref{mid_range}) follows from similar arguments to (\ref{eq:ldp-part})-(\ref{cell_511}).
\end{proof}

\begin{proof}[Proof of Lemma~\ref{lemma:case1}] Review Assumption 1 in \eqref{nice_birth_1}.
	We start with the analysis of \eqref{eq:case1-second}. Here, we consider two sub-cases: $t\leq \frac{mn}{80\alpha}$ and $t> \frac{mn}{80\alpha}$. For $t\leq \frac{mn}{80\alpha}$, we have $r=\max(2, 1+\frac{80\alpha t}{mn})=2$ and
\begin{equation}
\begin{split}
  &\exp\left\{
  -\frac{1}{2}(r-1-\log r) mn+\alpha t+2\log t+m\log p
  \right\}\\
  \leq&
  \exp\left\{
  -\frac{1-\log 2}{2}mn + \frac{mn}{80}+2\log\left(\frac{mn}{80\alpha}\right) + m\log p
  \right\}.
\end{split}
\end{equation}
Trivially $\frac{1-\log 2}{2}-\frac{1}{80}=0.14\cdots > \frac{1}{10}$. Note that $\log \frac{mn}{80\alpha}=o(mn)$ and $m\log p=o(mn)$ under Assumption 1 in \eqref{nice_birth_1}. It follows that
\begin{equation}
		  -\frac{1-\log 2}{2}mn + \frac{mn}{80} +2\log\Big(\frac{mn}{80\alpha}\Big) + m\log p
  \leq -\Big[\frac{1}{10}+o(1)\Big] mn.
\end{equation}
This implies
\begin{equation}\label{eq:case1-second-1}
\lim_{n\to\infty}\sup_{\frac{\delta\sqrt{n}}{100}\leq t\leq \frac{mn}{80\alpha} }\exp\left\{
  -\frac{1}{2}(r-1-\log r) mn+\alpha t+2\log t+m\log p
  \right\}= 0.
\end{equation}
Now we consider another sub-case where $t\geq \frac{mn}{80\alpha}$. For this case, $r=1+\frac{80\alpha t}{mn}$. It is not hard to see
 $$r-1-\log r\geq \frac{r}{12}$$ for $r\geq 2$. Apparently,
 $m\log p\leq \alpha t$ for $t\geq \frac{mn}{80\alpha}$ as $n$ is sufficiently large. It follows that
\begin{equation}
\begin{split}
  &\exp\left\{
  -\frac{1}{2}(r-1-\log r) mn+\alpha t+2\log t+m\log p
  \right\}\\
  \leq & \exp\left\{
-\frac{1}{24}\Big(1+\frac{80\alpha t}{mn}\Big)mn +2\alpha t +2 \log t
  \right\}\\
  \leq & \exp\left\{
-\left(\frac{80}{24}-2\right)\alpha t  +2 \log t
  \right\}\\
  = & \exp\left\{
-\left(\frac{4\alpha }{3}+o(1)\right)t
  \right\}.
\end{split}
\end{equation}
This implies
\begin{equation}\label{eq:case1-second-2}
  \lim_{n\to\infty}\sup_{t\geq \frac{mn}{80\alpha}}
 \exp\left\{
  -\frac{1}{2}(r-1-\log r) mn+\alpha t+2\log t+m\log p
  \right\}=0.
\end{equation}
Combining \eqref{eq:case1-second-1} and \eqref{eq:case1-second-2}, we obtain
\begin{equation}
  \lim_{n\to\infty}\sup_{ t\geq \frac{\delta\sqrt{n}}{100}}\exp\left\{
  -\frac{1}{2}(r-1-\log r) mn+\alpha t+2\log t+m\log p
  \right\}= 0.
\end{equation}
This completes the proof of \eqref{eq:case1-second}. We next show  (\ref{eq:case1-first}).

Recall $z=\frac{2\sqrt{m\log p}}{\sqrt{n}}+\frac{t}{2\sqrt{n}}\geq \frac{t}{2\sqrt{n}}$. Obviously, $z>\frac{\delta}{200}$ as $t>\frac{\delta\sqrt{n}}{100}$. It is elementary  to check there exists $\varepsilon>0$ such that $x-\log (1+x)\geq \varepsilon x$ for all $x>\frac{\delta}{200}$. Hence,
\bea\lbl{mom_seal}
  \frac{n}{2}[z-\log (1+z)] \geq \frac{1}{2}n\varepsilon z\geq \frac{\varepsilon}{4}\sqrt{n}\, t
\eea
for all $t>\frac{\delta\sqrt{n}}{100}$.
Reviewing $r=\max(2, 1+\frac{80\alpha t}{mn})$ and  $d=\min(\frac{1}{2}, \frac{t}{4m\sqrt{n}r})$, we have
\begin{equation}
\begin{split}
  m\log\frac{1}{d}=&\max\Big\{
  m\log 2, m\log \frac{4m\sqrt{n}r}{t}
  \Big\}\\
  =&\max\{
m\log 2, O(m\log m + m\log n+ m\log r)
  \}\\
  =&O\Big(m\log n+ m\log\Big(1+\frac{80\alpha t}{mn}\Big)\Big)\\
  =&O\Big(m\log n + \frac{ t}{n}\Big)
\end{split}
\end{equation}
since $0< \log (1+x)< x$ for all $x>0$. Furthermore, $\alpha t+2\log t\leq 2\alpha t$ as $t$ is sufficiently large, and $m \log p=o(mn)$ by Assumption 1 in \eqref{nice_birth_1}.
Consequently,
\begin{equation}
\begin{split}
  &\sup_{ t>\frac{\delta\sqrt{n}}{100}}\exp\left\{
  -\frac{n}{2}(z-\log(1+z))+\kappa m\log\frac{1}{d} +\alpha t+2\log t+ m \log p
\right\}\\
 \leq & \sup_{ t>\frac{\delta\sqrt{n}}{100}}\exp\left\{-\frac{\varepsilon}{4}\sqrt{n}\, t+3\alpha t + O(m\log n)
\right\}\\
=&\exp\left\{
-(1+o(1))\frac{\varepsilon}{4}\sqrt{n}\, t
\right\}\\
=& o(1).
\end{split}
\end{equation}
We obtain (\ref{eq:case1-first})  and the proof is completed.
\end{proof}

\begin{proof}[Proof of Lemma~\ref{lemma:case2}] It is trivial to show that
%
\begin{equation}\label{eq:small-1}
  z-\log (1+z)\geq \frac{1}{4}z^2
\end{equation}
for $0\leq z\leq 1$.
Recall the assumption that $\delta \in (0,1)$.  Then $z= \frac{2\sqrt{m\log p}}{\sqrt{n}}+\frac{t}{2\sqrt{n}}\leq \frac{2\sqrt{m\log p}}{\sqrt{n}}+\frac{\delta}{200}\leq 1$ as $n$ is sufficiently large. Now,
 \begin{equation}\label{eq:small-3}
  z^2=\left(\frac{2\sqrt{m\log p}}{\sqrt{n}}+\frac{t}{2\sqrt{n}}\right)^2\geq \frac{4m\log p}{n}+\frac{2t\sqrt{m\log p}}{n}.
 \end{equation}
By \eqref{eq:small-1}, we see
 \bea\label{eq:small-3.5}
    z-\log (1+z)
    \geq  \frac{m\log p}{n}+\frac{t\sqrt{m\log p}}{2n}.
 \eea
Now, reviewing $d=\frac{t}{8m\sqrt{n}}$ and $t\geq \delta$, we have $m\log(1/d)=O(m\log m + m\log n)=O(m\log n)$. This joint with \eqref{eq:small-3.5} implies that
 \begin{equation}
  \begin{split}
    &\exp\left\{
  -\frac{n}{2}[z-\log(1+z)]+\kappa m\log\frac{1}{d} +\alpha t+2\log t+ m \log p
  \right\}\\
  \leq  &
  \exp\left\{
 -\frac{1}{4}t\sqrt{m\log p} + O(m\log n)+\alpha t+ 2\log t
  \right\} \\
  = & \exp\left\{
  \big[-\frac{1}{4}+o(1)\big]t\sqrt{m\log p} + O(m\log n)
  \right\}.
  \end{split}
 \end{equation}
Since $t\geq (m/\log p)^{1/2}\sinf\log n$, we know $O(m\log n)= o(t\sqrt{m\log p})$ uniformly in $t$. Thus,
 \begin{equation}
 \begin{split}
 	&\exp\left\{
  -\frac{n}{2}[z-\log(1+z)]+\kappa m\log\frac{1}{d} +\alpha t+2\log t+ m \log p
  \right\}\\
  \leq&
  \exp\left\{
  -\big[\frac{1}{2}+o(1)\big]t\sqrt{m\log p}
  \right\}\\
  \leq &\exp\left\{
  -\frac{1}{4}t\sqrt{m\log p}
  \right\}
 \end{split}
 \end{equation}
 as $n$ is sufficiently large. We then get \eqref{eq:case2-first}.
Evidently,
 \beaa
 \sup_{\delta\vee \omega_n \leq t\leq \frac{\delta\sqrt{n}}{100}}\{\alpha t+ 2\log t\} =O(\sqrt{n}\,)
 \eeaa
as $n\to\infty.$ This implies that
 \begin{equation}
 \begin{split}
  -\frac{1}{2}(1-\log 2) mn+\alpha t+2\log t+m\log p
  = -\frac{1-\log 2}{2}[1+o(1)] mn.
 \end{split}
\end{equation}
The assertion \eqref{eq:case2-second} is verified.
\end{proof}

\begin{proof}[Proof of Lemma~\ref{lemma:wig-eigen-approx}] Review the notation $W_{\{1,...,m\}}$ above (\ref{eq:t-stat}) with $S=\{1,\cdots, m\}.$
		Let $\mu_1>...>\mu_m$ be the eigenvalues of $W_{\{1,...,m\}}$. According to \cite{James:1964kw} or \cite{muirhead2009aspects},
		 $\mu=(\mu_1,...,\mu_m)$ has density function
	\begin{equation}
		f_{m,n}(\mu)=c_{m,n} e^{-\frac{\sum_{i=1}^m \mu_i}{2}}\prod_{i=1}^m \mu_i^{\frac{n-m+1}{2}-1}\prod_{1\leq j<i\leq m} (\mu_j-\mu_i)I(\mu_1>...>\mu_m>0),
	\end{equation}
	where $c_{m,n}=m!2^{-nm/2}\prod_{j=1}^m\frac{\Gamma(3/2)}{\Gamma(1+(j/2))\Gamma((n-m+j)/2 )}$. In addition, $\lambda=(\lambda_1,...,\lambda_m)$ has density
	\begin{equation}\label{eq:cwig}
		\begin{split}
			\fwig{m}(\lambda) &= c_m e^{-\frac{1}{4}\sum_{k=1}^m\lambda_k^2}\prod_{1\leq j<i\leq m} (\lambda_j-\lambda_i)  \text{ with }\\
			c_m&= m!2^{-m}2^{-m(m-1)/4}\pi^{-m/2}\prod_{j=1}^m\frac{\Gamma(3/2)}{\Gamma(1+(j/2))};
		\end{split}
	\end{equation}
see, for example, Chapter 17 from \cite{Mehta:2004wq}.
		Note that $\nu_i=(\mu_i-n)/\sqrt{n}$, so we can write down the expression of $g_{n,m}$ as follows.
			\begin{equation}
			\begin{split}
						&g_{n,m}(v)\\
				=&n^{m/2}c_{m,n} \exp\left(-\frac{1}{2}\sum_{i=1}^m (\sqrt{n}v_i+n)\right)\prod_{i=1}^m (\sqrt{n}v_i+n)^{\frac{n-m+1}{2}-1}\\
				&~~~~~~~~~\cdot\prod_{1\leq j<i\leq m} (\sqrt{n}v_j-\sqrt{n}v_i)\\
				=&n^{m/2}e^{-nm/2}n^{m(n-m+1)/2-m}n^{m(m-1)/4}c_{m,n}\\
				&~~~~~~~~~\cdot e^{-(\sqrt{n}/2)\sum_{i=1}^m v_i}\prod_{i=1}^m \left(1+\frac{v_i}{\sqrt{n}}\right)^{\frac{n-m-1}{2}}\prod_{1\leq j<i\leq m}(v_j-v_i)
				\end{split}
			\end{equation}
for $v_1>v_2>...>v_m>-\sqrt{n}$ and $g_{n,m}(v)=0$, otherwise. Denote
\begin{equation}
C(m, n)=n^{m/2}e^{-nm/2}n^{m(n-m+1)/2-m}n^{m(m-1)/4}c_{m,n}.
\end{equation}
Then,
		\begin{equation}\label{eq:to-combine}
		\begin{split}
				&\log g_{n,m}(v)-\log \fwig{m}(v)\\
			=&
			\log C(m, n)-\log c_m +\sum_{i=1}^m\left[-\frac{\sqrt{n}}{2}v_i +\frac{n-m-1}{2}\log\Big(1+\frac{v_i}{\sqrt{n}}\Big)\right]+\frac{1}{4}\sum_{i=1}^m v_i^2
		\end{split}
		\end{equation}
for $v_1>v_2>...>v_m>-\sqrt{n}$. By Lemma \ref{lemma:wigner-eigen-const-approx} in Appendix \ref{bu_cucumber},
\bea\label{eq:g-by-fwig}
&&		\log g_{n,m}(v)-\log \fwig{m}(v)\nonumber\\
&=&o(1)+\sum_{i=1}^m\left[-\frac{\sqrt{n}}{2}v_i +\frac{n-m-1}{2}\log\Big(1+\frac{v_i}{\sqrt{n}}\Big)+\frac{1}{4}v_i^2\right]
\eea
for $v_1>v_2>...>v_m>-\sqrt{n}$. By the Taylor expansion,
\beaa
\Big|\log (1+x)-\big(x-\frac{x^2}{2}\big)\Big| \leq \sum_{i=3}^\infty\frac{|x|^k}{k}\leq \frac{|x|^3}{3(1-|x|)}
\eeaa
for all $|x|<1$. Therefore,
\begin{equation}\label{eq:taylor}
\Big|\log (1+x)-\big(x-\frac{x^2}{2}\big)\Big| \leq  |x|^3
\end{equation}
for $|x|<\frac{2}{3}.$ Writing $\frac{n-m-1}{2}=\frac{n}{2}-\frac{m+1}{2}$, it is easy to check
\bea\lbl{steam_water}
& & \sum_{i=1}^m\left[-\frac{\sqrt{n}}{2}v_i +\frac{n-m-1}{2}\Big(\frac{v_i}{\sqrt{n}}-\frac{v_i^2}{2n}\Big)
+\frac{1}{4} v_i^2\right]\nonumber\\
&=&-\frac{m+1}{2}\sum_{i=1}^m\Big(\frac{v_i}{\sqrt{n}}-\frac{v_i^2}{2n}\Big).
\eea
Combining \eqref{eq:g-by-fwig}-\eqref{steam_water}, and noting that $|v_i|\leq \|v\|_{\infty}$ for all $i$, we get
\begin{equation}
\begin{split}
		&\log g_{n,m}(v)-\log \fwig{m}(v)\\
		=&o(1)
	+\varpi_{m,n}\\	
&~~~~~+\sum_{i=1}^m\left[-\frac{\sqrt{n}}{2}v_i +\frac{n-m-1}{2}\Big(\frac{v_i}{\sqrt{n}}-\frac{v_i^2}{2n}\Big)+\frac{1}{4}v_i^2\right]\\
=&o(1) +\varpi_{m,n}-\frac{m+1}{2}\sum_{i=1}^m\Big(\frac{v_i}{\sqrt{n}}-\frac{v_i^2}{2n}\Big)
\end{split}
\end{equation}
provided $\|v\|_{\infty}\leq \frac{2}{3}\sqrt{n}$, where $\varpi_{m,n}$ is the error term and it is controlled by
\beaa
|\varpi_{m,n}|\leq \frac{n-m-1}{2}\cdot\frac{1}{n^{3/2}}\sum_{i=1}^m|v_i|^3\leq \frac{1}{n^{1/2}}\sum_{i=1}^m|v_i|^3.
\eeaa
By using the trivial bound that $|v_i|\leq \|v\|_{\infty}$ for each $i$, we obtain the desired conclusion from the above two assertions.
\end{proof}

\begin{proof}[Proof of Lemma~\ref{lemma:log-likelihood-ratio}]
	According to the density function of the Wishart distribution [see, e.g., \cite{anderson1962introduction} or \cite{muirhead2009aspects}], the density function for $W_{\{1,...,m\}}$ is
	\begin{equation}
		f_{W,m}(V)=
		\frac{|V|^{(n-m-1)/2}e^{-tr(V)/2}}{\Gamma_m(\frac{n}{2}) 2^{mn/2} },
	\end{equation}
for every $m\times m$ positive definite matrix $V$,  where $\Gamma_m(\cdot)$ is the multivariate gamma function defined by
	\begin{equation}
		\Gamma_m\left(\frac{n}{2}\right)={\pi}^{m(m-1)/4}\prod_{j=1}^m \Gamma\left(\frac{n-j+1}{2}\right),
	\end{equation}
	and we write $|V|$ for the determinant of a matrix $V$.
It is easy to see that the density function for $\frac{W_{\{1,...,m\}}-nI_m}{\sqrt{n}}$ is given by
	\begin{align}
		&f_{m,n}(w):\\
				=&(\sqrt{n}\,)^{\frac{m(m+1)}{2}}f_{W,m}(\sqrt{n}w+nI_m)\\
		= &n^{m(m+1)/4}\frac{|\sqrt{n}w+nI_m|^{(n-m-1)/2}
e^{-tr(\sqrt{n}w+nI_m)/2}}{\Gamma_m(\frac{n}{2}) 2^{mn/2}}
	\end{align}
for every $m\times m$ matrix $w$ such that $w+\sqrt{n}I_m$ is positive definite.
Simplifying the above display, we further have
	\begin{align}
		 f_{m,n}(w)
		= A(m,n)\exp\left\{
		\frac{n-m-1}{2}\log\Big|1+\frac{w}{\sqrt{n}}\Big|-\frac{1}{2}\sqrt{n}\,tr(w)
		\right\},
	\end{align}
	where $A(m,n)=n^{m(m+1)/4 +m(n-m-1)/2}e^{-nm/2}2^{-\frac{nm}{2}}/\Gamma_m(n/2)$.
	On the other hand,
	\begin{equation}
		\tf_m(w)
		=
		B(m)e^{-\frac{tr(w^2)}{4}},
	\end{equation}
	where $B(m)= (2\pi)^{-m(m+1)/2}2^{-m/2}$; see, for instance, \cite{Mehta:2004wq}. Now we consider
	\begin{equation}
	\begin{split}
			&	\log f_{m,n}(w)-\log \tf_m(w)\\
		= &\log A(m,n)-\log B(m)\\
& ~~~~~~~~~~~~~~~ +
		\frac{n-m-1}{2}\log \left|1+\frac{w}{\sqrt{n}}\right|-\frac{1}{2}\sqrt{n}\,tr(w)+\frac{1}{4}tr(w^2)
		\\
= & o(1)+\sum_{i=1}^m\left[\frac{n-m-1}{2}\log\Big(1+\frac{\lambda_i}{\sqrt{n}}\Big)
-\frac{1}{2}\sqrt{n}\lambda_i+\frac{1}{4}\lambda_i^2\right]
	\end{split}
	\end{equation}
for every $\lambda_i>-\sqrt{n}$ and $i=1,\cdots, m$ by Lemma \ref{lemma:AB} in Appendix \ref{bu_cucumber}, where $\lambda_1, \cdots \lambda_m$ are the eigenvalues of $w$. From \eqref{eq:taylor} and \eqref{steam_water},
\beaa
\log f_{m,n}(w)-\log \tf_m(w)=o(1) +\varepsilon_{m,n} -\frac{m+1}{2}\sum_{i=1}^m\Big(\frac{\lambda_i}{\sqrt{n}}-\frac{\lambda_i^2}{2n}\Big)
\eeaa
if $\max_{1\leq i \leq m}|\lambda_i|\leq \frac{2}{3}\sqrt{n}$, where $\varepsilon_{m,n}$ is the error term satisfying
{\beaa
|\varepsilon_{m,n}|\leq  \frac{n-m-1}{2}\sum_{i=1}^m\frac{|\lambda_i|^3}{n^{3/2}}
 \leq  \frac{m}{n^{1/2}}\max_{1\leq i\leq m}|\lambda_i|^3 =  mn^{-1/2}\|w\|^{3}.
\eeaa
}
In addition, $|\sum_{i=1}^m\lambda_i|\leq m\max_{1\leq i\leq m}|\lambda_i|=m\|w\|,$ and  $\sum_{i=1}^m\lambda_i^2\leq m\max_{1\leq i\leq m}|\lambda_i|^2=m\|w\|^2_2,$
The above three assertions lead to
\beaa
& & \log f_{m,n}(w)-\log \tf_m(w)\\
&=&o(1) +O\left(m^{2}n^{-1/2}\|w\|+ m^2n^{-1}\|w\|^2+ n^{-1/2}m\|w\|^{3}\right)
\eeaa
provided $\|w\|=\max_{1\leq i \leq m}|\lambda_i|\leq \frac{2}{3}\sqrt{n}$. The proof is finished.
\end{proof}

\begin{proof}[Proof of Lemma~\ref{lemma:MDP-wigner}]
	Let $B(v_1,\delta),..,B(v_N,\delta)$ be $N$ balls centered around $v_1,..,v_N$, respectively,  such that $\cup B(v_i,\delta)$ covers the unit sphere $\{v\in \mathbb{R}^m;\, \|v\|=1\}$. Then, for any $r>0$, by \eqref{Wangjing},
\bea\label{eq:tail-general-wig}
		& & \PP\big(\sup_{\|v\|=1}v^{\intercal} \tilde{W}_{\{1,..,m\}}v\geq x\big)\nonumber\\
		&\leq & N\cdot \max_{1\leq i\leq N}\PP\big(v_i^{\intercal}\tilde{W}_{\{1,..,m\}}v_i\geq x-2r\delta\big)+\PP\big(\lambda_{1}(\tilde{W}_{\{1,...,m\}})\geq r\big).
\eea
	According to the distribution of $\tilde{W}$,
	\begin{equation}
		v_i^{\intercal}\tilde{W}v_i\sim N(0, f(v_i)),
	\end{equation}
	where $f(y):=f(y)=2+(\eta-2)\sum_{i=1}^m y_i^4$
for any $y=(y_1,\cdots,y_m)^{\intercal}\in\mathbb{R}^m$. In fact, for any $y=(y_1,\cdots,y_m)^{\intercal}\in\mathbb{R}^m$,
\beaa
y^{\intercal}\tilde{W}y=\sum_{i=1}^m\tilde{w}_{ii}y_i^2+2\sum_{i<j}\tilde{w}_{ij}y_iy_j\sim N(0, \sigma_y^2)
\eeaa
such that $\sigma_y^2$ is equal to
\beaa
\E\left(\sum_{i=1}^m\tilde{w}_{ii}y_i^2+2\sum_{i<j}\tilde{w}_{ij}y_iy_j\right)^2
&=& \eta\sum_{i=1}^my_i^4+4\sum_{i<j}y_i^2y_j^2=f(y)
\eeaa
by independence. Recall $\PP(N(0, 1)\geq x)\leq e^{-x^2/2}$ for all $x\geq 1$.
	Thus, for $x-2r\delta>1$, the first term on the right side of \eqref{eq:tail-general-wig} is bounded by
	\begin{equation}
	\begin{split}
& N \cdot\sup_{\|y\|=1}\exp\left\{-\frac{(x-2r\delta)^2}{2(\sum_{i=1}^m y_i^4 (\eta-2)+2)}\right\}\\
		= & N \cdot\exp\left\{-\frac{(x-2r\delta)^2}{2\left[(\inf_{\|y\|=1}\sum_{i=1}^m y_i^4) (\eta-2)+2\right]}\right\},
	\end{split}
	\end{equation}
since $0\leq \eta\leq 2$. Observe that
	\begin{equation}
		\inf_{\|y\|=1}\sum_{i=1}^m y_i^4 = \frac{1}{m}.
	\end{equation}
	Thus,
	\begin{equation}\label{eq:first-term-wig-tail}
		N\cdot \max_{1\leq i\leq N}\PP\left(v_i^{\intercal}\tilde{W}_{\{1,..,m\}}v_i\geq x-2r\delta\right)
		\leq N \cdot\exp\left\{-\frac{(x-2r\delta)^2}{2\left[m^{-1} (\eta-2)+2\right]}\right\}
	\end{equation}
	if $x-2r\delta>1$. Now turn to estimate the last probability in
 \eqref{eq:tail-general-wig}. Note that
	\begin{equation}
		\lambda_{1}(\tilde{W}_{\{1,..,m\}})\leq \left(tr\big(\tilde{W}_{\{1,..,m\}}^2\big)\right)^{1/2} = \left(\sum_{i=1}^m\sum_{j=1}^m \tilde{W}_{ij}^2\right)^{1/2}.
	\end{equation}
	Note that $\sum_{i=1}^m\sum_{j=1}^m \tilde{W}^2_{ij}$ and $\eta Q_1+2Q_2$ have the same distribution, where $Q_1\sim \chi^2_{m}$, $Q_2\sim \chi^2_{m(m-1)/2}$ and $Q_1$ and $Q_2$ are independent. Also $\eta Q_1+2Q_2\leq 2(Q_1+Q_2)\sim 2\cdot \chi^2_{m(m+1)/2}$
	Thus, the last probability in
 \eqref{eq:tail-general-wig} is dominated by
\beaa
			\PP\left(\sum_{i=1}^m\sum_{j=1}^m \tilde{W}^2_{ij}\geq r^2\right)
		&\leq & \PP\left( \chi^2_{m(m+1)/2}\geq r^2/2\right)\\
& \leq & \PP\left( \frac{\chi^2_{m(m+1)/2}}{m(m+1)/2}\geq \frac{r^2}{m(m+1)}\right).	
\eeaa
Notice $\frac{r^2}{m(m+1)}\geq 8$ under the given condition $r\geq 4m$. Let $I(x)=\frac{1}{2}(x-1-\log x)$ for $x>0$. It is easy to check that $I(8)=(7-\log 8)/2>2.4$ and that $I(x)=\frac{1}{2}(x-1-\log x)\geq \frac{1}{4}x$ as $x\geq 8.$ By \eqref{tea_coffee}, the last probability above is no more than
\beaa
2\cdot \exp\left(-\frac{m(m+1)}{2}I\Big(\frac{r^2}{m(m+1)}\Big)\right) \leq 2\cdot e^{-r^2/8}.
\eeaa
Hence,
\begin{equation}
 \PP\big(\lambda_{1}(\tilde{W}_{\{1,...,m\}})\geq r\big)\leq 	\PP\left(\sum_{i=1}^m\sum_{j=1}^m \tilde{W}^2_{ij}\geq r^2\right)
		\leq 2\cdot e^{-r^2/8}.
 \end{equation}
 Combining the above display with \eqref{eq:tail-general-wig} and \eqref{eq:first-term-wig-tail}, we have
 \beaa
&& 	\PP\Big(\sup_{\|v\|=1}v^{\intercal} \tilde{W}_{\{1,..,m\}}v\geq x\Big)\\
&\leq & N\cdot\exp\left\{-\frac{(x-2r\delta)^2}{2\left[m^{-1} (\eta-2)+2\right]}\right\} + 2\cdot e^{-r^2/8}.
 \eeaa
The desired conclusion follows since $N\leq m^{1.5}(\log m) \delta^{-m}$ \citep{Rogers:1963tt}.
\end{proof}
\begin{proof}[Proof of Lemma~\ref{lemma:convergence-concepts}]
(i)$\implies$(ii): Easily,
\begin{eqnarray}
		\E\big[e^{\alpha Z_p}\big]
		&=& \E\big[e^{\alpha Z_p}\mathbf{1}_{\{Z_p\geq \delta\} }\big]+
		\E\big[e^{\alpha Z_p}\mathbf{1}_{\{ 0\leq Z_p< \delta \}}\big]\\
		&\leq & \E\big[e^{\alpha Z_p}\mathbf{1}_{\{Z_p\geq \delta\}}\big] + e^{\alpha \delta}.
\end{eqnarray}
Taking $\limsup_{p\to\infty}$ on both sides and then letting $\delta \downarrow 0$, we obtain
\begin{equation}
\limsup_{p\to\infty} \E\big[e^{\alpha Z_p}\big]\leq 1.
\end{equation}
On the other side,  $\liminf_{p\to\infty}\E\big[e^{\alpha Z_p}\big]\geq 1$ since $Z_p\geq 0$. Hence,
$\lim_{p\to\infty} \E\big[e^{\alpha Z_p}\big]=1$.

(ii) $\implies$ (i):
	For each $\beta>0$, we know $\mathbf{1}_{\{Z_p\geq \delta\}}\leq e^{\beta(Z_p-\delta)}
	$. Thus,
	\begin{equation}
		\E[e^{\alpha Z_p}\mathbf{1}_{\{Z_p\geq \delta\}}]
		\leq \E[e^{\alpha Z_p+\beta(Z_p-\delta)}]=e^{-\beta\delta}
		\E[e^{(\alpha+\beta)Z_p}].
	\end{equation}
Taking $\limsup_{p\to\infty}$ on both sides and then letting $\beta\to\infty$, we have
	\begin{equation}
		\limsup_{p\to\infty}\E[e^{\alpha Z_p}\mathbf{1}_{\{Z_p\geq \delta\}}]\leq 0.
	\end{equation}
	Thus, $\lim_{p\to\infty}\E[e^{\alpha Z_p}\mathbf{1}_{\{Z_p\geq \delta\}}]=0$.

	(ii)$\implies$ (iii): First, $\E [Z_p^{\alpha}]=\alpha\int_0^{\infty}x^{\alpha-1}P(Z_p\geq x)\,dx$. By the Markov inequality, $P(Z_p\geq x)\leq e^{-\beta x}\E e^{\beta Z_p}$ for all $x>0$ and $\beta>0$. It follows that
\begin{eqnarray*}
\E [Z_p^{\alpha}] \leq \alpha\, \big(\E e^{\beta Z_p}\big)\int_0^{\infty}x^{\alpha-1}e^{-\beta x}\,dx=\frac{\alpha\Gamma(\alpha)}{\beta^{\alpha}}\E e^{\beta Z_p}
\end{eqnarray*}
for all  $\beta>0$. The conclusion then follows by first letting $p\to\infty$ and then sending $\beta\to \infty.$


(iii) $\implies$ (iv): This is a direct consequence of the Chebyshev inequality and the equality $\lim_{p\to\infty}\E(Z_p)=0$.

(iii) $\implies$ (v): Let $\alpha=2$ in (iii), then $\limsup_{p}{\rm Var}(Z_p)\leq \lim_{p\to\infty}\E(Z_p^2)=0.$

\end{proof}

\end{document}